\documentclass[]{myarticle}   

\usepackage{graphicx}
\usepackage{latexsym,amsfonts,amsmath,theorem,amssymb}
\usepackage{array,tabularx,bbm}
\usepackage{booktabs,booktabs,threeparttable,boxedminipage}
\usepackage{orcidlink}
\usepackage{siunitx}
\usepackage{pstricks,epstopdf}
\usepackage{color}
\usepackage{caption}
\usepackage{framed}
\usepackage{blindtext}
\usepackage{float}
\usepackage{multirow}
\usepackage{lineno}
\usepackage[caption=false]{subfig}
\usepackage{algorithm,algpseudocode}
\usepackage{xcolor}
\usepackage[textsize=footnotesize]{todonotes}
\usepackage{mathrsfs} 
\usepackage{geometry}
\usepackage{arydshln}
\usepackage{titling}
\pretitle{\begin{center}\bfseries\LARGE}  
\posttitle{\end{center}}
\predate{}
\postdate{}
\date{}

\makeatletter
\let\old@footnotemark\@footnotemark
\renewcommand{\@footnotemark}{%
  \textcolor{black}{\old@makefnmark}%
}
\let\old@makefnmark\@makefnmark
\renewcommand{\@makefnmark}{\textcolor{black}{\old@makefnmark}}
\makeatother

\algrenewcommand\algorithmicrequire{\textbf{Input:}}
\algrenewcommand\algorithmicensure{\textbf{Output:}}

\newcommand\argmin{\mathop{\mathrm{argmin}}}

\newcommand\calG{\mathcal{G}}
\newcommand\calR{\mathcal{R}}

\definecolor{richPurple}{RGB}{160, 0, 200}
\definecolor{deepGreen}{RGB}{0, 160, 60}

\title{Data-informed posterior approximation for Bayesian linear inverse problems}
\author{
Haibo Li 
\thanks{School of Mathematics and Statistics \& Hubei Key Laboratory of Engineering Modeling and Scientific Computing, Huazhong University of Science and Technology, Wuhan 430074, China.
\href{mailto:haiboli@hust.edu.cn}{haiboli@hust.edu.cn}, \href{mailto:haibolee1729@gmail.com}{haibolee1729@gmail.com}}
}

\begin{document}

\maketitle

\begin{abstract}
  Computing posterior distributions in large-scale Bayesian linear inverse problems is challenging due to the high dimensionality of the parameter space. In this work, we develop a data-informed framework that shifts the computational focus from the parameter space to the data space. We rigorously characterize an intrinsically low-dimensional data space, establish its isometric embedding into the parameter space, and show that the prior-to-posterior update is confined to a data-informed subspace. This perspective allows posterior inference to be carried out in a reduced data-informed subspace. Based on this formulation, we propose a quotient-space Golub--Kahan bidiagonalization method to construct data-informed Krylov subspaces, and integrate empirical Bayesian inference into the iterative framework, enabling simultaneous hyperparameter estimation and posterior approximation in a matrix-free manner. Numerical experiments on representative problems support the theoretical framework and demonstrate the effectiveness of the resulting method.
\end{abstract}

\begin{keywords}
  Bayesian inverse problem, data-informed subspace, posterior approximation, quotient space, Krylov subspace method, Golub--Kahan bidiagonalization
\end{keywords}

\tableofcontents

\section{Introduction} 
Inverse problems arise across a diverse spectrum of scientific and engineering disciplines, including image reconstruction, computed tomography, geoscience, data assimilation and so on \cite{Hansen2006,Kaip2006,Buzug2008,Law2015,Richter2016}. The goal is to recover an unknown parameter or function from indirect measurements contaminated by noise. For linear inverse problems, upon discretization of the underlying continuous forward operator, these problems are typically formulated as a high-dimensional linear system:
\begin{equation}\label{inverse1}
  y = Gx + \eta,   \quad  \eta \sim \mathcal{N}(0,\,\Gamma),
\end{equation}
where $x \in \mathbb{R}^n$ represents the discretized unknown, $y \in \mathbb{R}^m$ is the observation data, and $G \in \mathbb{R}^{m \times n}$ is the forward matrix. The noise $\eta$ is assumed to follow a Gaussian distribution with a symmetric positive definite covariance $\Gamma \in \mathbb{R}^{m \times m}$. A fundamental challenge in solving such systems is the ill-posed nature of the inverse problem, which means that there may be multiple solutions that fit the observation data equally well, or the solution is very sensitive with respect to the observation noise. 

To address the ill-posedness, two predominant frameworks are widely employed: deterministic Tikhonov regularization approach and probabilistic Bayesian inference approach. The former is based on minimizing a loss function $\ell(Gx, y)$ added by a regularizer $R(x)$ that incorporates prior information about $x$ \cite{Tikhonov1977,Engl2000,hansen2010discrete}. In contrast, the Bayesian framework treats $x$ and $y$ as random variables, providing a complete characterization of the solution through the posterior distribution $x\,|\,y$ via Bayes' theorem \cite{Kaip2006,Stuart2010}. This approach has gained significant traction due to its ability to provide a rigorous quantification of the underlying uncertainty. The two methodologies are connected through the maximum a posteriori (MAP) estimator, where the mean of the posterior distribution corresponds to the solution of a specific Tikhonov regularization problem \cite{Kaip2006,dashti2013map,helin2015maximum}. 

To reconstruct the unknown, assume that $x$ has a Gaussian prior $\mathcal{N}(0,\lambda^{-1}\Sigma)$ independent of the noise $\eta$, where $\Sigma$ is a symmetric positive definite matrix, and $\lambda>0$ is a hyperparameter governing the prior variance level. In some applications, $\lambda$ may be known. Quite often, however, it is unknown and we assume that is the case here. According to Bayes' rule, the prior density and the likelihood function induced by the forward model  together yield the posterior probability density:
\begin{equation}
	\pi(x\,|\,y) \propto \pi(x)\,\pi(y\,|\,x)
	\propto \exp\left(-\frac{1}{2}\|Gx-y\|_{\Gamma^{-1}}^{2}-\frac{\lambda}{2}\|x\|_{\Sigma^{-1}}^2\right),
\end{equation}
where  $\|x\|_{B}:=(x^{\top}Bx)^{1/2}$ is the $B$-norm of $x$ for a positive definite matrix $B$. Since both the likelihood and the prior are Gaussian, the posterior distribution is also Gaussian, $x\,|\,y \sim \mathcal{N}(x_\lambda, C_\lambda)$, with its mean and covariance given by:
\begin{equation}\label{post_distr}
  x_{\lambda} = C_{\lambda}G^{\top}\Gamma^{-1}y, \qquad
  C_{\lambda} = (G^{\top}\Gamma^{-1}G+\lambda\Sigma^{-1})^{-1}.
\end{equation}
Specifically, the MAP estimator is the posterior mean, which can be obtained by minimizing the negative log-density of the Bayesian posterior:
\begin{equation}\label{Bayes1}
	x_{\lambda} = \argmin_{x \in \mathbb{R}^{n}}\{\|Gx-y\|_{\Gamma^{-1}}^{2} + \lambda\|x\|_{\Sigma^{-1}}^2\},
\end{equation}
It is evident from \cref{Bayes1} that the Tikhonov penalty term is explicitly determined by the prior covariance structure, bridging the probabilistic and deterministic perspectives of the inverse problem.

In practical large-scale applications, the parameter dimension $n$ is often extremely high, typically with $n \gg m$, as considered in this paper. Combined with the inherent ill-posedness of the inverse problem, this poses significant challenges for both stability and computational tractability. In particular, a major computational bottleneck is the inversion of the matrix $G^\top\Gamma^{-1}G+\lambda\Sigma^{-1}$ to obtain the posterior mean and covariance. A variety of approaches have been proposed for dimension reduction of Bayesian inverse problems. Early work uses the Karhunen--Lo\`eve expansion \cite{karhunen1947lineare,Loeve1978} to represent the prior distribution in terms of its dominant eigenmodes and truncates the expansion to achieve dimension reduction \cite{li2006efficient,marzouk2009dimensionality}. However, this strategy relies entirely on the spectral properties of the prior and does not account for the influence of the forward operator or the observation data, which restricts the choice of prior models and may lead to significant truncation errors. An alternative approach \cite{lieberman2010parameter} constructs reduced subspaces using model-constrained sampling and greedy strategy, but it may fail to capture posterior variability in directions that are not informed by the data. Another approach \cite{flath2011fast} approximates the posterior covariance via a low-rank approximation of the Hessian of the log-likelihood. All of the above approaches aim at dimension reduction in the entire parameter space, but do not fully exploit the interaction between the unknown parameter and observation data.

In the common setting of inverse problems, the prior distribution encodes smoothness of the unknown parameter, and the observation data are linked to the parameter through a smoothing forward operator. As a consequence, the prior-to-posterior update is often confined to a relatively low-dimensional subspace of the parameter space. Exploiting this structure can enable efficient low-rank approximations of the posterior mean and covariance, leading to substantial computational savings \cite{martin2012stochastic,bui2012extreme,bui2013computational}. In \cite{cui2014likelihood,spantini2015optimal}, the terminology \emph{likelihood-informed subspace} is proposed to identify such low-dimensional subspaces, which contain the directions along which the posterior differs significantly from the prior. All these methods approximate the posterior covariance through low-rank corrections of the prior covariance, which can provide a more balanced treatment of prior and data information. However, these approaches are still formulated in the parameter space and often rely on explicit factorizations of the prior covariance or its inverse, which can be computationally prohibitive in large-scale settings.

In this paper, we shift the focus from the parameter space to the data space. Endowed with the prior-induced geometry, the parameter space is given by the finite-dimensional Hilbert space $(\mathbb{R}^{n},\,\langle \cdot,\cdot \rangle_{\Sigma^{-1}})$, which can be extremely high-dimensional if $n$ is very large. In contrast, we introduce the \emph{data space} as the quotient Hilbert space $(\mathbb R^m/\mathcal N(M),\,\langle\cdot,\cdot\rangle_M)$, with its precise meaning and motivation detailed in \Cref{sec2}, where $M = G\Sigma G^\top$ is symmetric positive semidefinite and may be rank deficient with $\mathcal{N}(M)$ denoting its null space. The dimension of this space is $\mathrm{rank}(M)=\mathrm{rank}(G)$, and the smoothing nature of $G$ typically leads to rapidly decaying singular values and hence a much smaller effective dimension. We show that the data space admits an isometric embedding into the parameter space, and that the prior-to-posterior update occurs entirely within the image of this embedding, which has the same  dimension as the data space. This structure reveals that the informative directions of the posterior are entirely characterized by a low-dimensional, data-informed subspace that is independent of the ambient parameter dimension. While the principle that the prior-to-posterior change may concentrate on a low-dimensional subspace is well recognized in the literature, the present work provides a rigorous and explicit mathematical characterization of this phenomenon through the quotient-space geometry of the data space.

The data-space formulation offers a fundamentally different computational paradigm. In the parameter space, approximating the posterior mean and covariance requires dealing with $G^\top \Gamma^{-1} G$ as well as manipulating the prior covariance $\Sigma$, which is often large, dense, and ill-conditioned when defined through covariance kernels \cite{Williams2006gaussian,Roininen2014whittle}. Such operations typically demand matrix factorizations or inversions that are computationally prohibitive in high dimensions. In contrast, the data-space formulation allows the problem to be recast in a low-dimensional space that captures dominant data-informed variations, while eliminating directions that are not identifiable from the data. As a consequence, both the computational complexity and the representation of the posterior are governed by the intrinsic low dimension of the data. To obtain efficient approximations of the posterior, we propose a new Krylov subspace method \cite{liesen2013krylov} to iteratively capture dominant data-informed subspaces, as detailed in \Cref{sec3}. This leads naturally to matrix-free algorithms whose computational cost scales with the dimension of the data rather than the ambient parameter dimension. In addition, we show that the likelihood-informed subspace introduced in \cite{spantini2015optimal} arises naturally from the data-space formulation and admits a geometric interpretation within this framework, providing a structural understanding of data-informed subspaces and their connection to the proposed Krylov method.

An additional challenge in Bayesian inverse problems is the selection of the hyperparameter $\lambda$. In much of the existing literature, $\lambda$ is assumed to be known. In practice, however, it is typically determined either by repeated posterior evaluations over multiple candidates or by maximizing the marginal likelihood within an empirical Bayesian inference framework \cite{carlin1997bayes,petrone2014bayes,rousseau2017asymptotic}. Both approaches require repeated large-scale computations---either solving ill-posed linear systems or evaluating the marginal likelihood in the high-dimensional parameter space. From the perspective of Tikhonov regularization, the hyperparameter $\lambda$ can be iteratively estimated within the hybrid regularization framework \cite{Chung2017,saibaba2020efficient,li2024preconditioned,chung2024computational}, where the original problem is projected onto a sequence of low-dimensional Krylov subspaces and the parameter is determined from the reduced problems using parameter-choice rules such as the discrepancy principle or generalized cross-validation \cite{Kilmer2001,Chungnagy2008}. While these approaches significantly reduce computational cost, their convergence behavior remains not fully understood, and they may fail to produce reliable estimates of $\lambda$ in practice \cite{Chungnagy2008,Renaut2017,li2025projected}. These limitations motivate the development of scalable methods that enable stable and efficient hyperparameter estimation while simultaneously approximating the posterior, without requiring repeated computations in the parameter space.

Within the data-space formulation, the data-informed Krylov subspaces are constructed in the quotient space $(\mathbb R^m/\mathcal N(M),\langle\cdot,\cdot\rangle_M)$. To apply the Krylov subspace method and handle the degeneracy of $M$, one may work in the range space $(\mathcal{R}(M), \langle\cdot,\cdot \rangle_{M})$ that is isometrically isomorphic to the quotient space, where $\mathcal{R}(M)$ denotes the range of $M$. However, this approach requires computing the orthogonal projector $MM^{\dagger}$, which is computationally expensive for large $m$. To overcome this difficulty, we develop a Krylov subspace framework directly in the quotient space, without requiring the computation of projections onto the range of $M$. Specifically, we propose a quotient-space formulation of the Golub--Kahan bidiagonalization process, referred to as Q-GKB, which constructs a sequence of data-informed Krylov subspaces of increasing dimension in a fully matrix-free manner. Building on this construction, we integrate empirical Bayesian inference into the Q-GKB iteration, enabling simultaneous hyperparameter estimation as well as posterior approximation by restricting the prior-to-posterior update to the data-informed Krylov subspaces. 

The contributions of this paper can be summarized as follows:
\begin{itemize}
    \item We establish a data-space geometric framework for Bayesian linear inverse problems based on the quotient space induced by the semidefinite matrix $M = G \Sigma G^\top$. This yields an isometric correspondence between data-space and parameter-space representations and provides a rigorous characterization of the low-dimensional nature of posterior update.
    \item We develop a quotient-space Krylov method via a novel formulation of the Golub--Kahan bidiagonalization process, which constructs a sequence of data-informed Krylov subspaces iteratively in a fully matrix-free manner.
    \item We integrate empirical Bayesian inference into the proposed framework, enabling simultaneous hyperparameter estimation and posterior approximation within low-dimensional Krylov subspaces. We propose an efficient algorithm for computing low-rank posterior approximation with theoretical accuracy guarantees.
\end{itemize}
To assess the proposed theory and the performance of the method, we present several representative numerical examples of increasing scale and complexity. In particular, the large-scale examples highlight the effectiveness of the matrix-free method within the data-space formulation, where direct access to the posterior distribution is computationally prohibitive.

The remainder of this paper is organized as follows. In \Cref{sec2}, we develop the data-space geometric framework and establish a structural correspondence between the parameter and data spaces. In \Cref{sec3}, we present the Q-GKB method for constructing data-informed Krylov subspaces and investigate its properties. In \Cref{sec4}, we propose a Q-GKB-based empirical Bayesian method that enables simultaneous estimation of the hyperparameter $\lambda$ and efficient low-rank posterior approximation in a matrix-free manner, and analyze its approximation accuracy. Numerical experiments are presented in \Cref{sec5}, and the conclusion is provided in \Cref{sec6}.

\section{Data-informed structure of the parameter space} \label{sec2}
We investigate the properties of the Bayesian posterior and the geometry of the parameter space, which motivate the introduction of a quotient-space structure for the data space. In the remainder of the paper, we use $\mathrm{Im}(\cdot)$ to denote the image of a linear operator, and $I$ to denote the identity matrix or operator, with its order clear from the context. The symbol $0$ is used to denote either the real number zero or the zero vector of a linear space, with its intended meaning being clear in each instance.

The following result gives equivalent representations of the posterior mean and covariance. We remark that the same expressions appear in the literature, e.g., \cite{Kaip2006,Stuart2010}. For completeness, we 
provide the full proof.

\begin{proposition}\label{prop:post1}
  The posterior mean and covariance admit the equivalent representations
  \begin{align}
    x_{\lambda} &= \Sigma G^{\top}(G\Sigma G^{\top}+\lambda\Gamma)^{-1}y , \label{post_mean1} \\
    C_{\lambda} &= \lambda^{-1}\Sigma - \lambda^{-2}\Sigma G^{\top}(\Gamma+\lambda^{-1}G\Sigma G^{\top})^{-1}G\Sigma . \label{post_cov1}
  \end{align}
\end{proposition}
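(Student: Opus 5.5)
We start from the defining formulas \cref{post_distr}, $C_\lambda=(G^\top\Gamma^{-1}G+\lambda\Sigma^{-1})^{-1}$ and $x_\lambda=C_\lambda G^\top\Gamma^{-1}y$, and derive the two representations by direct matrix identities. Both $\Sigma$ and $\Gamma$ are symmetric positive definite, and $M=G\Sigma G^\top$ is positive semidefinite. Hence $G\Sigma G^\top+\lambda\Gamma$ and $\Gamma+\lambda^{-1}G\Sigma G^\top$ are positive definite, so every inverse appearing in the statement exists. We record this first.

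For the covariance \cref{post_cov1}, we apply the Sherman--Morrison--Woodbury identity
\[
(A+UCV)^{-1}=A^{-1}-A^{-1}U(C^{-1}+VA^{-1}U)^{-1}VA^{-1}
\]
with $A=\lambda\Sigma^{-1}$, $U=G^\top$, $C=\Gamma^{-1}$ and $V=G$. Since $A^{-1}=\lambda^{-1}\Sigma$, this gives
\[
C_\lambda=\lambda^{-1}\Sigma-\lambda^{-1}\Sigma G^\top(\Gamma+\lambda^{-1}G\Sigma G^\top)^{-1}G\Sigma\lambda^{-1},
\]
which is exactly \cref{post_cov1}. If we prefer not to quote Woodbury, we can instead multiply the proposed right-hand side by $G^\top\Gamma^{-1}G+\lambda\Sigma^{-1}$ and check that the product is the identity. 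Writing $K=(\Gamma+\lambda^{-1}M)^{-1}$, the cross terms collapse through the identity $\lambda^{-1}G^\top\Gamma^{-1}MK+G^\top K=G^\top\Gamma^{-1}(\lambda^{-1}M+\Gamma)K=G^\top\Gamma^{-1}$.

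For the mean \cref{post_mean1}, we aim for the push-through identity
\[
(G^\top\Gamma^{-1}G+\lambda\Sigma^{-1})^{-1}G^\top\Gamma^{-1}=\Sigma G^\top(G\Sigma G^\top+\lambda\Gamma)^{-1}.
\]
To prove it, we clear both inverses: it suffices to show
\[
G^\top\Gamma^{-1}(G\Sigma G^\top+\lambda\Gamma)=(G^\top\Gamma^{-1}G+\lambda\Sigma^{-1})\Sigma G^\top .
\]
Both sides expand to $G^\top\Gamma^{-1}G\Sigma G^\top+\lambda G^\top$, so the identity holds. Applying both sides to $y$ yields \cref{post_mean1}. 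As an alternative, we could substitute \cref{post_cov1} into $x_\lambda=C_\lambda G^\top\Gamma^{-1}y$ and simplify, but the push-through route is cleaner.

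No step is a real obstacle. The only point needing care is the invertibility of $G\Sigma G^\top+\lambda\Gamma$ when $M$ is rank deficient, and that is settled by the positive definiteness of $\Gamma$ together with $\lambda>0$.
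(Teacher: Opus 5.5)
Your covariance argument is the paper's: Woodbury with $A=\lambda\Sigma^{-1}$, $U=G^\top$, $C=\Gamma^{-1}$, $V=G$. Your optional direct multiplication check is also correct.

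For the mean you take a different route. The paper substitutes \cref{post_cov1} into $x_\lambda=C_\lambda G^\top\Gamma^{-1}y$ and factors out $\lambda^{-1}\Sigma G^\top$. It then collapses the bracket using $I-(\lambda\Gamma+M)^{-1}M=\lambda(\lambda\Gamma+M)^{-1}\Gamma$. You instead verify the push-through identity $C_\lambda G^\top\Gamma^{-1}=\Sigma G^\top(M+\lambda\Gamma)^{-1}$ by clearing both inverses. This reduces to the one-line check that both sides of $G^\top\Gamma^{-1}(M+\lambda\Gamma)=(G^\top\Gamma^{-1}G+\lambda\Sigma^{-1})\Sigma G^\top$ equal $G^\top\Gamma^{-1}M+\lambda G^\top$.

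Your version is shorter and makes the mean formula independent of the covariance formula. It needs only two invertibility facts. The first is that $G^\top\Gamma^{-1}G+\lambda\Sigma^{-1}$ is invertible, which holds because it is $\lambda\Sigma^{-1}$ plus a positive semidefinite term. The second is that $M+\lambda\Gamma$ is invertible, which you address explicitly while the paper leaves it implicit.

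The paper's derivation is constructive rather than a verification after the fact. It is also the template reused in \Cref{prop:approx_post}: there the approximate mean is obtained by inserting the Woodbury form of $\widehat C_\lambda^{(k)}$ and collapsing with the analogous identity $I-(\lambda I+T_k)^{-1}T_k=\lambda(\lambda I+T_k)^{-1}$.
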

\begin{proof}
    The covariance formula can be derived by applying the Woodbury identity, which is
    \[(A+UCV)^{-1}=A^{-1}-A^{-1}U(C^{-1}+VA^{-1}U)^{-1}VA^{-1}, \]
    for any nonsingular $A\in\mathbb{R}^{n\times n}$, $C\in\mathbb{R}^{m\times m}$, and $U\in\mathbb{R}^{n\times m}$, $V\in\mathbb{R}^{m\times n}$. Therefore,
    \begin{align*}
      C_\lambda
      &= (\lambda\Sigma^{-1}+G^\top\Gamma^{-1}G)^{-1} 
      = \lambda^{-1}\Sigma-\lambda^{-1}\Sigma G^\top(\Gamma+\lambda^{-1}G\Sigma G^\top)^{-1}G\lambda^{-1}\Sigma \\
      &= \lambda^{-1}\Sigma - \lambda^{-2}\Sigma G^\top (\Gamma+\lambda^{-1}G\Sigma G^\top)^{-1}G\Sigma,
    \end{align*}
    which proves the covariance formula. Substituting the above representation into $x_\lambda=C_\lambda G^\top\Gamma^{-1}y$, we have
  \begin{align*}
    x_\lambda 
    &= \left[\lambda^{-1}\Sigma - \lambda^{-2}\Sigma G^\top(\Gamma+\lambda^{-1}G\Sigma G^\top)^{-1}G\Sigma \right] G^\top\Gamma^{-1}y \\
    &= \lambda^{-1}\Sigma G^\top\Gamma^{-1}y - \lambda^{-2}\Sigma G^\top (\Gamma+\lambda^{-1}G\Sigma G^\top)^{-1} G\Sigma G^\top\Gamma^{-1}y.
  \end{align*}
  Let $M=G\Sigma G^\top \in \mathbb R^{m\times m}$. It holds that
  \begin{align*}\label{eq:xlambda-proof-mid}
    x_\lambda 
    &= \Sigma G^\top \left[\lambda^{-1}\Gamma^{-1}-\lambda^{-2}(\Gamma+\lambda^{-1}M)^{-1}M\Gamma^{-1}\right]y \\
    &= \lambda^{-1}\Sigma G^\top\left[I-(\lambda\Gamma+M)^{-1}M\right]\Gamma^{-1}y \\
    &= \lambda^{-1}\Sigma G^\top\bigl[\lambda(\lambda\Gamma+M)^{-1}\Gamma\bigr]\Gamma^{-1}y 
    = \Sigma G^\top(\lambda\Gamma+M)^{-1}y,
  \end{align*}
  where we have used the identity $I-(\lambda\Gamma+M)^{-1}M = \lambda(\lambda\Gamma+M)^{-1}\Gamma.$ This proves the formula for the posterior mean.
\end{proof}

From \cref{post_mean1} and \cref{post_cov1}, it is natural to introduce the positive semidefinite matrix 
\begin{equation}\label{def_M}
  M = G\Sigma G^\top \in \mathbb R^{m\times m}.
\end{equation}
Define the data-space variable $z_\lambda =(M+\lambda\Gamma)^{-1}y\in\mathbb{R}^{m}$. Then the posterior mean can be written as $x_\lambda = \Sigma G^\top z_\lambda$. Consider the regularization problem
\begin{equation}\label{regu_data}
  \min_{z\in\mathbb{R}^{m}}\{\|Mz-y\|_{\Gamma^{-1}}^2 + \lambda z^{\top}Mz \}.
\end{equation}
If $M$ is not positive definite, this problem may admit multiple solutions. However, it can be shown that any solution coincides with $z_\lambda$ up to an element of $\mathcal{N}(M)$.

\begin{proposition}\label{prop:zlambda-variational}
Consider the regularization problem \cref{regu_data} with \(\lambda>0\). Then 
\begin{enumerate}
  \item[(a)] \(z_\lambda\) is a minimizer of \cref{regu_data}, and the set of all minimizers is $z_\lambda+\mathcal N(M)$;
  \item[(b)] if \(z\in z_\lambda+\mathcal N(M)\), it holds that $\Sigma G^\top z = x_\lambda$. Hence, all minimizers of \cref{regu_data} yield the same posterior mean.
\end{enumerate}
\end{proposition}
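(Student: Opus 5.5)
The plan is to treat \cref{regu_data} as a convex quadratic in $z$ and read off its minimizers from the normal equations. Write $J(z)=\|Mz-y\|_{\Gamma^{-1}}^2+\lambda z^\top Mz$. Since $M$ is symmetric positive semidefinite and $\Gamma^{-1}$ is positive definite, $J$ is convex, so $z$ is a minimizer exactly when $\nabla J(z)=0$. Differentiating gives
\[
\nabla J(z)=2M\Gamma^{-1}(Mz-y)+2\lambda Mz = 2M\Gamma^{-1}\bigl[(M+\lambda\Gamma)z-y\bigr].
\]
So the minimizers are the solutions of $M\Gamma^{-1}\bigl[(M+\lambda\Gamma)z-y\bigr]=0$. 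The matrix $M+\lambda\Gamma$ is symmetric positive definite for $\lambda>0$, so $z_\lambda=(M+\lambda\Gamma)^{-1}y$ is well defined and makes the bracket vanish. Hence $z_\lambda$ is a minimizer.

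To identify the full minimizer set, I would write a general solution as $z=z_\lambda+w$. Because $(M+\lambda\Gamma)z_\lambda=y$, the optimality condition reduces to $M\Gamma^{-1}(M+\lambda\Gamma)w=0$, that is, $M\Gamma^{-1}Mw+\lambda Mw=0$.

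\begin{itemize}
\item If $w\in\mathcal N(M)$, this condition holds trivially.
\item Conversely, multiplying on the left by $w^\top$ gives $\|Mw\|_{\Gamma^{-1}}^2+\lambda\, w^\top Mw=0$. Both terms are nonnegative, so each vanishes. Since $\Gamma^{-1}$ is positive definite, $\|Mw\|_{\Gamma^{-1}}=0$ forces $Mw=0$.
\end{itemize}

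This proves (a). As a cross-check, the same conclusion follows from $J(z_\lambda+w)-J(z_\lambda)=\|Mw\|_{\Gamma^{-1}}^2+\lambda w^\top Mw$, where the cross term vanishes by the optimality of $z_\lambda$.

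For (b), I need $\Sigma G^\top w=0$ whenever $Mw=0$. The key identity is $w^\top Mw=\|\Sigma^{1/2}G^\top w\|_2^2$ (equivalently $\|\Sigma G^\top w\|_{\Sigma^{-1}}^2$). So $Mw=0$ gives $\Sigma^{1/2}G^\top w=0$, and therefore $\Sigma G^\top w=0$. This shows $\mathcal N(M)=\mathcal N(\Sigma G^\top)$. Then $\Sigma G^\top z=\Sigma G^\top z_\lambda=x_\lambda$ by \cref{prop:post1}, specifically \cref{post_mean1}.

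The only step requiring care is the converse inclusion in (a), which rules out minimizers outside $z_\lambda+\mathcal N(M)$. The nonnegativity argument above handles it.
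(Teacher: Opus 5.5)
Your proposal is correct and follows essentially the same route as the paper: you take the normal equations of the convex quadratic $J$, shift by $z_\lambda$, pair the reduced condition $M\Gamma^{-1}Mw+\lambda Mw=0$ with $w$ so that both nonnegative terms vanish, and use $w^\top Mw=\|\Sigma^{1/2}G^\top w\|_2^2$ to get $\Sigma G^\top w=0$ on $\mathcal N(M)$. The only cosmetic difference is that you read off $Mw=0$ from the $\|Mw\|_{\Gamma^{-1}}^2$ term, whereas the paper uses $w^\top Mw=0$; both are valid.
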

\begin{proof}
(a) Define $J(z)=\|Mz-y\|_{\Gamma^{-1}}^2+\lambda z^\top M z.$ Then \(J\) is a convex quadratic functional on \(\mathbb R^m\). Hence its minimizers are exactly the solutions of 
\[\nabla J(z) = 2M \Gamma^{-1}(Mz-y)+2\lambda Mz = 0.\]
Since $z_{\lambda}$ satisfies $(M+\lambda\Gamma)z_\lambda=y$, or equivalently $Mz_\lambda-y=-\lambda\Gamma z_\lambda$, it can be verified that $\nabla J(z_\lambda)=0$. Next let \(z\) be vector such that $\nabla J(z)=0$. Using \(y=(M+\lambda\Gamma)z_\lambda\), we have
\[Mz-y = Mz-(M+\lambda\Gamma)z_\lambda= M(z-z_\lambda)-\lambda\Gamma z_\lambda.\]
Substituting it into $\nabla J(z)$ yields
\begin{align*}
\frac{1}{2}\nabla J(z)=M\Gamma^{-1}\bigl(M(z-z_\lambda)-\lambda\Gamma z_\lambda\bigr)+\lambda Mz 
=M\Gamma^{-1}M(z-z_\lambda)+\lambda M(z-z_\lambda).
\end{align*}
Taking the inner product with \(z-z_\lambda\), we obtain
\[\nabla J(z)= 0 \quad \Rightarrow \quad (z-z_\lambda)^\top M\Gamma^{-1}M(z-z_\lambda)
+ \lambda (z-z_\lambda)^\top M(z-z_\lambda)=0.\]
Since $\Gamma^{-1}$ is positive definite and $M$ is positive semidefinite, both terms in the left-hand-side are nonnegative and must therefore vanish. In particular, $(z - z_\lambda)^\top M (z - z_\lambda) = 0$, which implies that $z - z_\lambda \in \mathcal{N}(M)$. Conversely, for any \(p\in\mathcal N(M)\), we have $\frac{1}{2}\nabla J(z_\lambda+p)=M\Gamma^{-1}Mp+\lambda Mp=0$. This proves (a).

(b) For any \(v\in\mathcal N(M)\) we have $0=v^\top Mv = (G^\top v)^\top \Sigma (G^\top v)$.
This implies $G^\top v=0$, and thereby $\mathcal N(M)\subseteq \mathcal N(G^\top)$. Conversely, it is obvious that $G^\top v=0$ implies $G\Sigma G^\top v=0$, and hence $v\in \mathcal N(M)$. Therefore, it holds $\mathcal N(M)=\mathcal N(G^\top)$. Now for any $p\in\mathcal N(M)$, we have 
\[\Sigma G^\top(z_\lambda+p)
=\Sigma G^\top z_\lambda+\Sigma G^\top p = \Sigma G^\top z_\lambda=x_{\lambda}.\]
This completes the proof.
\end{proof}

By \Cref{prop:zlambda-variational}, the posterior mean lies in the range space $\mathcal{R}(\Sigma G^\top)$. 
From \cref{post_cov1}, define 
\[
\Delta=\lambda^{-1}\Sigma-C_{\lambda}
=\lambda^{-2}\Sigma G^{\top}(\Gamma+\lambda^{-1}M)^{-1}G\Sigma
\]
as the prior-to-posterior covariance update. 
Then $\mathcal{R}(\Delta)\subset\mathcal{R}(\Sigma G^\top)$ and 
$\mathcal{R}(\Sigma G^\top)^{\perp}\subset\mathcal{N}(\Delta)$. 
Hence, the covariance update acts only on $\mathcal R(\Sigma G^\top)$ and vanishes on its orthogonal complement, reflecting the principle that the data update the prior only along a subspace of dimension $\mathrm{rank}(G)$. In the parameter space, this phenomenon is described by the terminology \emph{likelihood-informed subspace} (LIS) introduced in \cite{cui2014likelihood,spantini2015optimal}. Let $H=G^{\top}\Gamma^{-1}G$, and the generalized eigenpairs of $(H,\,\Sigma^{-1})$ are $\{(\mu_{i},w_i)\}_{i=1}^{l}$ satisfying $\mu_{1}\ge \mu_{2}\ge \cdots \ge \mu_{l}>0$, $\mu_{i}=0$ for $i>l$, and $\{w_i\}_{i=1}^{l}$ are mutually $\Sigma^{-1}$-orthonormal. It is shown in \cite{spantini2015optimal} that the directions along which the posterior changes from the prior are entirely in $\mathrm{span}\{w_{i}\}_{i=1}^{l}$. For any $1\le r \le l$, the subspace $\mathrm{span}\{w_{i}\}_{i=1}^{r}$ is referred to as the $r$-dimensional LIS, which can be used to construct low-rank approximation of the posterior.

Motivated by the above structure, it is natural to consider the action of the mapping $\Sigma G^\top$ on $\mathbb{R}^m$ and identify vectors that differ by elements of $\mathcal{N}(M)$. Accordingly, we define the relation
\[ u \sim v \quad \Longleftrightarrow \quad u-v \in \mathcal N(M), \]
which is an equivalence relation on $\mathbb R^m$. Denote by $[u]$ the equivalence class of $u$, which is an element of the quotient linear space $\mathbb R^m/\mathcal N(M)$ with dimension
\[\mathrm{dim}(\mathbb R^m/\mathcal N(M)) = m - \mathrm{dim}(\mathcal N(M)) 
= \mathrm{rank}(M) = \mathrm{rank}(G).\]
We refer to the finite-dimensional Hilbert space 
\begin{equation}\label{eq:quotient-inner}
  (\mathbb{R}^m / \mathcal{N}(M), \langle \cdot, \cdot \rangle_M), \quad
   \langle [u],[v]\rangle_M := u^\top M v
\end{equation}
as the \emph{data space} of the underlying Bayesian linear inverse problem. The bilinear form $\langle \cdot, \cdot\rangle_{M}$ is well defined on $\mathbb R^m/\mathcal N(M)$, since if $u-u'\in \mathcal N(M)$ and $v-v'\in \mathcal N(M)$, then $u^\top M v - {u'}^\top M v' =(u-u')^\top M v + {u'}^\top M (v-v') = 0$. Moreover, if $\langle [u],[u]\rangle_M=0$, then $u\in\mathcal{N}(M)$, implying that $[u]=0$. Therefore, $\langle \cdot, \cdot\rangle_{M}$ defines an inner product on $\mathbb R^m/\mathcal N(M)$. Notice that $\Gamma+\lambda^{-1}M\in\mathbb{R}^{m\times m}$ is the covariance of the marginal distribution of $y$. This suggests considering the generalized eigenvalue decomposition of $(M,\,\Gamma)$, which characterizes the associated spectral structure of the data space. Since $\Gamma$ is positive definite, there exists a set of $M$-orthonormal generalized eigenvectors corresponding to the nonzero generalized eigenvalues of $(M,\Gamma)$. We refer to such eigenpairs as the nonzero $M$-orthonormal generalized eigenpairs.

The following theorem establishes a precise geometric and spectral correspondence between the data space and parameter space. 

\begin{theorem}[Correspondence between data and parameter spaces]\label{thm:isom}
  Define the linear operator 
  \begin{align}\label{isom}
    \begin{split}
      \iota: (\mathbb R^m/\mathcal N(M),\langle\cdot,\cdot\rangle_M)
          &\longrightarrow (\mathbb R^n,\langle\cdot,\cdot\rangle_{\Sigma^{-1}}) \\
      [u] &\longmapsto \Sigma G^{\top}u.
    \end{split}
  \end{align} 
  Then the following statements hold.
\begin{enumerate}
\item[(a)] (Isometric embedding)
The map $\iota$ is an isometric embedding, i.e.,
\[ \langle \iota([u]),\iota([v])\rangle_{\Sigma^{-1}}
=\langle [u],[v]\rangle_M,
\]
and $\iota$ is injective.
\item[(b)] (Parameter space decomposition)
It holds that 
\begin{equation*}
  (\mathbb R^n,\langle\cdot,\cdot\rangle_{\Sigma^{-1}})
  = \mathrm{Im}(\iota) \oplus \mathcal{N}(G) ,
\end{equation*}
where the two subspaces are $\Sigma^{-1}$-orthogonal, and $x_{\lambda}\in \mathrm{Im}(\iota)=\mathcal{R}(\Sigma G^\top)$.
\item[(c)] (Spectral correspondence)
Let $\{(\hat\mu_i,\hat w_i)\}_{i=1}^{\hat{l}}$ be all nonzero $M$-orthonormal generalized eigenpairs of $(M,\,\Gamma)$. Then $l=\hat{l}=\mathrm{rank}(G)$, and $\{\hat\mu_i,\iota([\hat w_i])\}_{i=1}^{l}$ are all nonzero $\Sigma^{-1}$-orthonormal eigenpairs of $(H,\,\Sigma^{-1})$.
\end{enumerate}
\end{theorem}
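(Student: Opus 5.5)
For part (a), we first check that $\iota$ is well defined. If $u\sim u'$, then $u-u'\in\mathcal N(M)=\mathcal N(G^\top)$ by the proof of \Cref{prop:zlambda-variational}(b), so $\Sigma G^\top u=\Sigma G^\top u'$. Linearity is immediate. The isometry is a one-line computation:
\[
\langle \Sigma G^\top u,\Sigma G^\top v\rangle_{\Sigma^{-1}} = u^\top G\Sigma\Sigma^{-1}\Sigma G^\top v = u^\top M v = \langle [u],[v]\rangle_M .
\]
Injectivity then follows because an isometry between inner product spaces has trivial kernel: $\iota([u])=0$ forces $\langle[u],[u]\rangle_M=0$, hence $[u]=0$.

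For part (b), we first compute the orthogonal complement of $\mathrm{Im}(\iota)=\mathcal R(\Sigma G^\top)$ with respect to $\Sigma^{-1}$. A vector $x$ lies in it iff $x^\top\Sigma^{-1}\Sigma G^\top u = (Gx)^\top u=0$ for all $u$, that is, iff $Gx=0$. Thus $\mathrm{Im}(\iota)^{\perp_{\Sigma^{-1}}}=\mathcal N(G)$. In a finite-dimensional inner product space every subspace is complemented by its orthogonal complement, so the $\Sigma^{-1}$-orthogonal direct sum follows. As a dimension check, $\dim \mathrm{Im}(\iota)=\mathrm{rank}(G)$ by injectivity and $\dim\mathcal N(G)=n-\mathrm{rank}(G)$. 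Finally, $x_\lambda=\Sigma G^\top z_\lambda=\iota([z_\lambda])\in\mathrm{Im}(\iota)$ by \Cref{prop:zlambda-variational}.

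For part (c), take a nonzero generalized eigenpair, $M\hat w=\hat\mu\Gamma\hat w$ with $\hat\mu\neq0$, and set $w=\Sigma G^\top\hat w$. Then
\[
Hw = G^\top\Gamma^{-1}G\Sigma G^\top\hat w = G^\top\Gamma^{-1}M\hat w = \hat\mu G^\top\hat w = \hat\mu\Sigma^{-1}w,
\]
and $w\neq0$ since $\hat w^\top M\hat w=\hat\mu\,\hat w^\top\Gamma\hat w>0$. By part (a), $M$-orthonormality of the $\hat w_i$ transfers exactly to $\Sigma^{-1}$-orthonormality of the $\iota([\hat w_i])$. This gives $\hat l$ $\Sigma^{-1}$-orthonormal eigenvectors of $(H,\Sigma^{-1})$ with nonzero eigenvalues, so $\hat l\le l$.

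The main obstacle is the counting: showing that these are \emph{all} the nonzero eigenpairs and that $l=\hat l=\mathrm{rank}(G)$. The plan is to compute both counts directly as ranks. The pencil $(M,\Gamma)$ is equivalent to the symmetric matrix $\Gamma^{-1/2}M\Gamma^{-1/2}$, so the number of nonzero eigenvalues counted with multiplicity is $\hat l=\mathrm{rank}(M)=\mathrm{rank}(G)$. Similarly, $(H,\Sigma^{-1})$ is equivalent to $\Sigma^{1/2}H\Sigma^{1/2}$, so $l=\mathrm{rank}(H)=\mathrm{rank}(G)$. Hence $l=\hat l$. Since the $\hat l$ vectors $\iota([\hat w_i])$ are orthonormal, they span the whole nonzero-eigenvalue eigenspace of $(H,\Sigma^{-1})$, and the multisets $\{\hat\mu_i\}$ and $\{\mu_i\}$ coincide. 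This is the sense of ``all'' eigenpairs, up to the choice of basis within repeated eigenvalues.
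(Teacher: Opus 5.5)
Your proof is correct. Part (a) and the forward direction of (c) are essentially the paper's argument. The differences lie in how you close (b) and the completeness claim in (c).

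\textbf{Part (b).} You characterize the $\Sigma^{-1}$-orthogonal complement of $\mathcal R(\Sigma G^\top)$ exactly as $\mathcal N(G)$, via $x^\top\Sigma^{-1}\Sigma G^\top u=(Gx)^\top u$, so the orthogonal direct sum is immediate. The paper only verifies that $\mathrm{Im}(\iota)$ and $\mathcal N(G)$ are $\Sigma^{-1}$-orthogonal. It then uses injectivity to get $\dim\mathrm{Im}(\iota)=\mathrm{rank}(G)$ and checks that the dimensions add up to $n$.

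\textbf{Part (c).} You obtain ``all'' by a counting argument. The $\hat l=\mathrm{rank}(G)$ vectors $\iota([\hat w_i])$ are $\Sigma^{-1}$-orthonormal eigenvectors with nonzero eigenvalues. They lie in the $l=\mathrm{rank}(G)$-dimensional span of such eigenvectors of $(H,\Sigma^{-1})$, hence exhaust it, and the eigenvalue multisets coincide. The paper instead constructs the inverse correspondence explicitly. For $Hw=\mu\Sigma^{-1}w$ with $\mu>0$, the vector $u=\mu^{-1}\Gamma^{-1}Gw$ satisfies $Mu=\mu\Gamma u$ and $\iota([u])=w$.

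\textbf{Comparison.} Your route is shorter and gives the multiset equality in one stroke, but it leans on the rank identities and the spectral theorem for the symmetrized pencils. The paper's construction proves surjectivity onto each eigenspace directly, eigenvalue by eigenvalue. It also yields a concrete formula for pulling a parameter-space (LIS) eigenvector back to a data-space representative, which fits the paper's data-space viewpoint.
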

\begin{proof}
  (a) If $[u] = [u']$, then $u - u' \in \mathcal{N}(M)$. From part~(a) of \Cref{prop:zlambda-variational}, $\mathcal{N}(M) = \mathcal{N}(G^\top)$, so $G^\top(u - u') = 0$ and thus $\Sigma G^\top u = \Sigma G^\top u'$. Hence $\iota$ is well defined. For the injectivity, suppose $\iota([u]) = \Sigma G^\top u = 0$. Then $u^\top M u = \|\Sigma G^\top u\|_{\Sigma^{-1}}^2 = 0$, so $u \in \mathcal{N}(M)$, i.e., $[u] = 0$.  For any $[u], [v] \in \mathbb{R}^m/\mathcal{N}(M)$, it holds
  \[\langle \iota([u]),\, \iota([v]) \rangle_{\Sigma^{-1}} = (\Sigma G^\top u)^\top \Sigma^{-1} (\Sigma G^\top v)
  = u^\top G \Sigma G^\top v = u^\top M v = \langle [u],[v]\rangle_M.\] 
  Thus $\iota$ is an isometric embedding.

  (b) For any $[u]\in\mathbb R^m/\mathcal N(M)$ and $v\in\mathcal N(G)$, it holds 
  \begin{equation*}
    \langle \iota([u]),v\rangle_{\Sigma^{-1}}= (\Sigma G^{\top}u)^{\top}\Sigma^{-1} v 
    = u^{\top}Gv = 0.
  \end{equation*}
  Hence $\mathrm{Im}(\iota)$ and $\mathcal{N}(G)$ are $\Sigma^{-1}$ orthogonal. Since $\iota$ is injective, we have
  \[\mathrm{dim}(\mathrm{Im}(\iota))=\mathrm{dim}(\mathbb R^m/\mathcal N(M))
  =\mathrm{rank}(M)=\mathrm{rank}(G)=\mathrm{dim}(\mathcal{R}(\Sigma G^{\top})). \] 
  Therefore, it holds that $\mathrm{Im}(\iota)=\mathcal{R}(\Sigma G^\top)$ and $\mathrm{dim}(\mathrm{Im}(\iota))+\mathrm{dim}(\mathcal{N}(G))=n$. This also proves $(\mathbb R^n,\langle\cdot,\cdot\rangle_{\Sigma^{-1}})
  = \mathrm{Im}(\iota) \oplus \mathcal{N}(G)$. It is obvious that $x_{\lambda}\in\mathrm{Im}(\iota)$ since $x_{\lambda}=\Sigma G^{\top}z_{\lambda}$.

  (c) Since $\Sigma$ and $\Gamma$ are symmetric positive definite, we have
  \[\mathrm{rank}(M)=\mathrm{rank}(G\Sigma G^\top)=\mathrm{rank}(G), \quad
  \mathrm{rank}(H)=\mathrm{rank}(G^\top\Gamma^{-1}G)=\mathrm{rank}(G).\]
  Hence the pencils $(M,\,\Gamma)$ and $(H,\,\Sigma^{-1})$ have the same number of nonzero generalized eigenvalues, equals to $\mathrm{rank}(G)$. Now let $(\hat\mu,\hat w)$ be a nonzero generalized eigenpair of $(M,\Gamma)$, i.e., $M\hat w=\hat\mu\,\Gamma \hat w$ with $\hat\mu>0$. Since $\hat\mu\neq 0$, we must have $\hat w\notin\mathcal N(M)$, and thus $[\hat w]\neq 0$ in $\mathbb R^m/\mathcal N(M)$. Let $w=\iota([\hat w])=\Sigma G^\top \hat w$. Because $\iota$ is injective, we have $w\neq 0$.
  Using $M=G\Sigma G^\top$, we have
  \[ Hw = G^\top\Gamma^{-1}G\,(\Sigma G^\top \hat w)
  = G^\top\Gamma^{-1}M\hat w = \hat\mu\, G^\top \hat w. \]
  Combining with $\Sigma^{-1}w=G^\top\hat w$, we obtain $Hw=\hat\mu\,\Sigma^{-1}w$. This shows that $(\hat\mu,w)$ is a generalized eigenpair of $(H,\,\Sigma^{-1})$. Conversely, let $(\mu,w)$ be a generalized eigenpair of $(H,\,\Sigma^{-1})$ with $\mu>0$, i.e., $Hw=\mu\,\Sigma^{-1}w$. Let $u=\mu^{-1}\Gamma^{-1}G w\in\mathbb R^m$. Then 
  \[\iota([u])=\Sigma G^\top u = \mu^{-1}\Sigma G^\top\Gamma^{-1}Gw
  =\mu^{-1}\Sigma H w =w,\]
  and 
  \[Mu = G\Sigma G^\top u = Gw = \mu\Gamma(\mu^{-1}\Gamma^{-1}Gw) = \mu\Gamma u,
  \]
  so $(\mu,u)$ is a nonzero generalized eigenpair of $(M,\,\Gamma)$ satisfying $w=\iota([u])$. We have therefore established a one-to-one correspondence between the nonzero generalized eigenpairs of $(M,\,\Gamma)$ and $(H,\,\Sigma^{-1})$. Moreover, by part (a), $\iota$ maps $M$-orthonormal vectors to $\Sigma^{-1}$-orthonormal vectors. This completes the proof.
\end{proof}

\begin{remark}
Theorem~\ref{thm:isom} shows that the data-space formulation provides a complete geometric representation of the data-informed structure of the posterior. In particular, the image $\mathrm{Im}(\iota)$ identifies the data-informed subspace in the parameter space, along which the posterior differs from the prior, while its $\Sigma^{-1}$-orthogonal complement $\mathcal N(G)$ corresponds to directions that are not informed by the data. Furthermore, the spectral correspondence shows that the eigenstructure in the data space induces the corresponding LIS in the parameter space, thereby providing a geometric characterization of the LIS in terms of the quotient data space.
\end{remark}

In the parameter space, a common approach to approximate the posterior covariance $C_{\lambda}$ is based on LIS. We use the Loewner order ``$\preceq$'' for symmetric matrices, defined by $A \preceq B$ if $B-A$ is symmetric positive definite (SPD) \cite{horn2012matrix}. It is shown in \cite{spantini2015optimal} that the optimal approximation to $C_{\lambda}$ within the set
\[\mathcal{M}_r = \{\lambda^{-1}\Sigma-KK^{\top} \succ 0 : \mathrm{rank}(K)\leq r\}\]
with respect to the F{\"o}rstner distance is achieved by choosing $KK^{\top}=\sum_{i=1}^{r}\frac{\mu_i}{\lambda(\lambda+\mu_i)}\, w_{i}w_{i}^{\top}$, where $\{(\mu_i,w_i)\}_{i=1}^{r}$ are the leading $r$ generalized eigenpairs of $(H,\,\Sigma^{-1})$. The resulting approximation can be written as
\begin{equation}\label{cov_LIS}
  C_{\lambda}^{(r)} = (H_{r}+\lambda\Sigma^{-1})^{-1},
\end{equation}
where
\[H_{r}=\Sigma^{-1}\left(\sum_{i=1}^{r}\mu_{i}w_{i}w_{i}^{\top}\right)\Sigma^{-1}\]
is a rank-$r$ approximation of $H$ in the $r$-dimensional LIS.

By \Cref{thm:isom}, the data-space geometry induced by $M$ is often intrinsically low-dimensional due to the smoothing nature of the forward operator $G$. This provides a geometric justification for effective approximation of the posterior structure by identifying dominant subspaces of the data space. In particular, since the nonzero spectral structure of $(H,\,\Sigma^{-1})$ is completely determined by that of $(M,\,\Gamma)$, it is natural to approximate $H$ via the dominant generalized eigenpairs of $(M,\,\Gamma)$. Accordingly, we equivalently can write
\begin{equation}\label{Hk_1}
  H_{r}
  = \Sigma^{-1}\!\left(\Sigma G^{\top}
  \Big(\sum_{i=1}^{r}\mu_{i}\hat{w}_{i}\hat{w}_{i}^{\top}\Big)
  G\Sigma\right)\!\Sigma^{-1}
  \;=\;
  G^{\top}\!\left(\sum_{i=1}^{r}\mu_{i}\hat{w}_{i}\hat{w}_{i}^{\top}\right)\!G,
\end{equation}
where $\{(\mu_i,\hat{w}_i)\}_{i=1}^{r}$ are the leading $r$ generalized eigenpairs of $(M,\,\Gamma)$. This construction provides a data-space geometric interpretation of LIS-based posterior approximation.

\section{Quotient-space Krylov method in the data space} \label{sec3}
For large-scale problems, a direct eigenvalue decomposition of $(M,\,\Gamma)$ is computationally expensive, and forming $M$ explicitly is typically infeasible. Nevertheless, matrix-vector products can be computed efficiently as $Mv = G(\Sigma(G^\top v))$. Moreover, the hyperparameter $\lambda$ is often unknown in advance and its estimation in the full parameter space is costly. In view of the connection between Bayesian inference and Tikhonov regularization, it is natural to work with the regularization problem \cref{regu_data} in the data space, which enables the estimation of $\lambda$ in an iterative framework. These considerations make the Krylov subspace method particularly attractive, as they rely only on matrix-vector products and naturally capture dominant low-dimensional data-informed subspaces.

\subsection{Quotient-space Golub--Kahan bidiagonalization}
In the data space, the regularization problem \cref{regu_data} can be equivalently formulated as
\begin{equation}\label{regu_quot}
  \min_{[z]\in\mathbb{R}^{m}/\mathcal{N}(M)}\{\|Mz-y\|_{\Gamma^{-1}}^2 + \lambda\|[z]\|_{M}^2 \}.
\end{equation}
To design practical Krylov subspace method for this problem, one possible approach is to represent equivalence classes by elements in the range space $\mathcal R(M)$ and restrict the iteration accordingly. However, this requires explicit projections onto $\mathcal R(M)$, typically involving the projector $MM^\dagger$, which is computationally expensive. Instead, we work directly on the quotient space $\mathbb R^m/\mathcal N(M)$, which removes the degeneracy without requiring explicit projections. 

Taking into account the geometry of the data-space and the observation model, we consider the linear operator
\begin{align}\label{eq:TM-def}
  \begin{split}
  \mathcal T : (\mathbb R^m/\mathcal N(M),\langle\cdot,\cdot\rangle_M) 
  &\longrightarrow (\mathbb R^m,\langle\cdot,\cdot\rangle_{\Gamma^{-1}}) \\
  [u] &\longmapsto Mu,
  \end{split}
\end{align}
which is well defined, because $[u]=[v] \Leftrightarrow  u-v\in\mathcal N(M)$ implies $Mu=Mv$.
Its adjoint
\[\mathcal T^* : (\mathbb R^m,\langle\cdot,\cdot\rangle_{\Gamma^{-1}}) \longrightarrow  (\mathbb R^m/\mathcal N(M),\langle\cdot,\cdot\rangle_M) \]
is characterized by the relation
\[\langle \mathcal T[u], p\rangle_{\Gamma^{-1}}=
\langle [u], \mathcal T^\ast p\rangle_M, \quad \forall \, [u]\in \mathbb R^m/\mathcal N(M),\ p\in \mathbb R^m. \]
The following result provides an explicit expression for the action of $\mathcal T^\ast$ on a vector.

\begin{lemma}\label{lem:adjoint-TM}
For every $p\in\mathbb R^m$, the adjoint of $\mathcal T$ is given by
\begin{equation}\label{eq:adjoint-TM}
\mathcal T^\ast p = [\Gamma^{-1}p].
\end{equation}
\end{lemma}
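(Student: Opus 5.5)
The plan is to verify directly that the candidate $[\Gamma^{-1}p]$ satisfies the defining adjoint relation, and then invoke uniqueness of the adjoint on a finite-dimensional inner product space. The candidate is well defined as an element of $\mathbb R^m/\mathcal N(M)$, since $\Gamma^{-1}p\in\mathbb R^m$ and we simply take its equivalence class. So the only content is an identity of inner products, plus uniqueness.

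First, fix $[u]\in\mathbb R^m/\mathcal N(M)$ and $p\in\mathbb R^m$. Using the definition \cref{eq:TM-def} of $\mathcal T$ and of the $\Gamma^{-1}$-inner product, compute
\[
\langle \mathcal T[u],p\rangle_{\Gamma^{-1}} = (Mu)^\top\Gamma^{-1}p = u^\top M\Gamma^{-1}p,
\]
using the symmetry of $M$. On the other side, the quotient inner product \cref{eq:quotient-inner} gives
\[
\langle [u],[\Gamma^{-1}p]\rangle_M = u^\top M(\Gamma^{-1}p).
\]
The two expressions coincide. The left side does not depend on the representative $u$ because $\mathcal T$ is well defined. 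The right side does not depend on the choice of representatives either, by the well-definedness of $\langle\cdot,\cdot\rangle_M$ shown right after \cref{eq:quotient-inner}.

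Second, establish uniqueness. Suppose $[q_1]$ and $[q_2]$ both satisfy the adjoint relation for a given $p$. Then $\langle [u],[q_1]-[q_2]\rangle_M=0$ for all $[u]$. Taking $[u]=[q_1-q_2]$ gives $\|[q_1-q_2]\|_M=0$, hence $[q_1]=[q_2]$ because $\langle\cdot,\cdot\rangle_M$ is an inner product on the quotient. Therefore $\mathcal T^\ast p=[\Gamma^{-1}p]$. Linearity of $p\mapsto[\Gamma^{-1}p]$ is immediate, which is consistent with $\mathcal T^\ast$ being a linear operator.

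There is no genuine obstacle here. The only point needing care is that all objects are defined on equivalence classes rather than on vectors, and this is already covered by the well-definedness facts established earlier in the paper.
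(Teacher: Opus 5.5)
Your proposal is correct and follows essentially the same route as the paper. The paper makes the same computation $\langle \mathcal T[u],p\rangle_{\Gamma^{-1}} = u^\top M\Gamma^{-1}p = \langle [u],[\Gamma^{-1}p]\rangle_M$ and then concludes, leaving uniqueness of the adjoint implicit. Your explicit uniqueness argument via definiteness of $\langle\cdot,\cdot\rangle_M$ on the quotient just fills in that standard step.
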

\begin{proof}
By the definition of $\mathcal T^\ast$, for any $[u]\in \mathbb R^m/\mathcal N(M)$ and $p\in\mathbb R^m$,
\[\langle [u], \mathcal T^\ast p\rangle_M =
\langle \mathcal T[u], p\rangle_{\Gamma^{-1}}
= (Mu)^\top \Gamma^{-1}p = u^\top M \Gamma^{-1}p.\]
By definition of the inner product on $\mathbb R^m/\mathcal N(M)$, we have 
\[ u^\top M \Gamma^{-1}p = \langle [u],[\Gamma^{-1}p]\rangle_M
= \langle [u],\mathcal T^\ast p\rangle_M,
\]
which shows that $\mathcal T^\ast p = [\Gamma^{-1}p]$. In other words, $\Gamma^{-1}p$ is a representative element of the equivalence class $\mathcal T^\ast p$.
\end{proof}

We now apply the Golub--Kahan bidiagonalization (GKB) to the operator-vector pair $\{\mathcal T, y\}$; see \cite{caruso2019convergence} for the GKB process for general linear compact operators. This formulation allows the iteration to be initialized directly from the vector $y$ and carried out entirely within the data-space framework without computing orthogonal projections. The abstract GKB recursions read
\begin{equation}\label{eq:abstract-gGKB}
\left\{
\begin{aligned} 
& \beta_1 u_1 = y,\\
& \alpha_i\bar{w}_i = \mathcal T^\ast u_i - \beta_i \bar{w}_{i-1},\\
& \beta_{i+1} u_{i+1} = \mathcal T \bar{w}_i - \alpha_i u_i, \quad i=1, 2, \dots,
\end{aligned}
\right.
\end{equation}
where $\bar{w}_0:=0$, and $\alpha_i$ and $\beta_i$ are positive scalars such that $\{u_i\}_i$ are $\Gamma^{-1}$-orthonormal and $\{\bar{w}_i\}_i$ are orthonormal in the data space $(\mathbb R^m/\mathcal N(M),\langle\cdot,\cdot\rangle_M)$.

To obtain a practical matrix-vector implementation, we represent each quotient vector $\bar{w}_i$ by a representative element $v_i\in\mathbb R^m$. Then $\|\bar{w}_i\|_M=\sqrt{v_i^\top M v_i}$. By \Cref{lem:adjoint-TM}, we have $\mathcal T^\ast u_i=[\Gamma^{-1}u_i]$. Using these relations, the abstract iterations reduce to the practical recursions:
\begin{equation}\label{eq:practical-gGKB}
\left\{
\begin{aligned}
& \beta_1 u_1 = y,\\
& \alpha_i v_i = \Gamma^{-1}u_i-\beta_i v_{i-1},\\
& \beta_{i+1} u_{i+1} = M v_i-\alpha_i u_i, \quad i=1, 2, \dots,
\end{aligned}
\right.
\end{equation}
where $v_0:=0$, and the vectors $\{v_i\}_i$ are $M$-orthonormal, that is, $v_{i}^{\top}Mv_j=\delta_{ij}$ with $\delta_{ij}$ being the Kronecker symbol. The scalars $\alpha_i$ and $\beta_{i+1}$ can be computed by evaluating the $M$- and $\Gamma^{-1}$-norms of $\Gamma^{-1}u_i-\beta_i v_{i-1}$ and $M v_i-\alpha_i u_i$, respectively. This process can be carried out iteratively. We refer to this process as the \emph{quotient-space Golub--Kahan bidiagonalization} (Q-GKB). The pseudocode of Q-GKB is given in \Cref{alg:qgkb}. Importantly, the Q-GKB iteration requires only matrix-vector products with $M$ and $\Gamma^{-1}$ and does not require computing orthogonal projections onto $\mathcal R(M)$. While computations with $\Gamma^{-1}$ are unavoidable, in many practical settings where $\eta$ is an uncorrelated Gaussian noise, $\Gamma$ is diagonal, so $\Gamma^{-1}$ can be applied efficiently.

\begin{algorithm}[htbp]
\caption{Quotient-space Golub--Kahan bidiagonalization (Q-GKB)}
\label{alg:qgkb}
\algorithmicrequire \ Semidefinite $M\in\mathbb{R}^{m\times m}$, positive definite $\Gamma\in\mathbb{R}^{m\times m}$, nonzero $y\in\mathbb{R}^{m}$
\begin{algorithmic}[1]
\State Initialize: $\beta_1 = (y\Gamma^{-1}y)^{1/2}$, \ $u_1 = y/\beta_1$
\State Compute $s=\Gamma^{-1}u_1$, \ $\alpha_1=(s^{\top}Ms)^{1/2}$, 
\If{$\alpha_1=0$} terminate 
\Else 
\State $v_1=s/\alpha_1$ \EndIf
\For{$i=1,2,\dots,k$}
\State $r = Mv_i-\alpha_iu_i$, \ $\beta_{i+1}=(r\Gamma^{-1}r)^{1/2}$
\If{$\beta_{i+1}=0$} terminate
\Else 
\State $u_{i+1}=r/\beta_{i+1}$
\EndIf
\State $s=\Gamma^{-1}u_{i+1}-\beta_{i+1}v_i$, \ $\alpha_{i+1}=(s^{\top}Ms)^{1/2}$
\If{$\alpha_{i+1}=0$} terminate
\Else 
\State $v_{i+1}=s/\alpha_{i+1}$
\EndIf
\EndFor
\end{algorithmic}
\algorithmicensure \ $\{\alpha_i, \beta_i\}_{i=1}^{k+1}$, \ $\{u_i, v_i\}_{i=1}^{k+1}$
\end{algorithm}

The Q-GKB iteration terminates in at most $\mathrm{rank}(G)$ steps, since $\{v_i\}_i$ are $M$-orthonormal and therefore cannot have more than $\mathrm{rank}(G)$ linearly independent vectors. Define the breakdown step as
\begin{equation}\label{break}
  k_b = \min_{k\ge 1}\{k: \alpha_{k+1}\beta_{k+1}=0\}.
\end{equation}
Before the iteration breakdown, i.e., $1\leq k\leq k_b$, let
\begin{equation}\label{orth_mat}
  U_{k+1} = (u_1,\dots,u_{k+1}) \in \mathbb R^{m\times (k+1)}, \quad
  V_k = (v_1,\dots,v_k) \in \mathbb R^{m\times k},
\end{equation}
and let
\begin{equation}\label{eq:bidiag-matrix}
B_k =
\begin{pmatrix}
\alpha_1 \\
\beta_2 & \alpha_2 \\
& \beta_3 & \ddots \\
&& \ddots & \alpha_k \\
&&& \beta_{k+1}
\end{pmatrix}
\in \mathbb R^{(k+1)\times k}.
\end{equation}
If $\beta_{k_b+1}=0$ at step $k=k_b$, then we let $u_{k_b+1}=0$ in $U_{k_b+1}$. Now each computed vector $v_i$ is a representative element of $\bar{w}_i$ that is mutually orthonormal in $(\mathbb R^m/\mathcal N(M),\langle\cdot,\cdot\rangle_M)$. By the property of GKB of linear compact operator, it holds that $V_k^\top M V_k = I$. Moreover, if $k=k_b$ and $\beta_{k_b+1}=0$, then $U_{k_b}^\top \Gamma^{-1}U_{k_b}=I$, otherwise it holds that $U_{k+1}^\top \Gamma^{-1}U_{k+1}=I$ for any $1\le k \le k_b$.

The following result gives the basic properties of Q-GKB in the data space. We use $e_1$ and $e_{k+1}$ to denote the first and $(k+1)$th column of the identity matrix of order $k+1$.

\begin{proposition}\label{thm:qsggkb-properties}
Assume that Q-GKB runs for $k$ steps with $k\leq k_b$. Then the following properties hold.
\begin{enumerate}
\item[(a)] The matrix-form relations hold:
\begin{equation}\label{eq:bidiag-relations}
\left\{
\begin{aligned}
& \beta_1 U_{k+1} e_1 = y, \\
& M V_{k} = U_{k+1} B_k, \\
& \Gamma^{-1} U_{k+1} = V_{k} B_k^\top + \alpha_{k+1} v_{k+1} e_{k+1}^\top .
\end{aligned}
\right.
\end{equation}
\item[(b)] The vectors $\{u_i\}_{i=1}^k$ form an $\Gamma^{-1}$-orthonormal basis of the Krylov subspace
\[\mathcal K_k(M\Gamma^{-1},y) = \operatorname{span}\{(M\Gamma^{-1})^i y\}_{i=0}^{k-1},\]
and vectors $\{v_i\}_{i=1}^k$ form an $M$-orthonormal basis of the Krylov subspace
\[\mathcal K_k(\Gamma^{-1}M,\Gamma^{-1}y) = \operatorname{span}\{(\Gamma^{-1}M)^i\Gamma^{-1}y\}_{i=0}^{k-1}.\]
\end{enumerate}
\end{proposition}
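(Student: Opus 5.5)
Part (a) follows by reading the practical recursions \cref{eq:practical-gGKB} column by column. The first relation is the initialization $\beta_1u_1=y$. For the second, the $i$th column of $MV_k$ is $Mv_i=\alpha_iu_i+\beta_{i+1}u_{i+1}$, which is exactly the $i$th column of $U_{k+1}B_k$, since that column of $B_k$ has $\alpha_i$ in row $i$ and $\beta_{i+1}$ in row $i+1$. For the third, the $i$th column of $\Gamma^{-1}U_{k+1}$ is $\Gamma^{-1}u_i=\alpha_iv_i+\beta_iv_{i-1}$ for $1\le i\le k+1$ (with $v_0=0$). For $i\le k$ this is the $i$th column of $V_kB_k^\top$, whose rows are the rows of $B_k$. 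The last column $i=k+1$ gives $\beta_{k+1}v_k+\alpha_{k+1}v_{k+1}$: the first term is the last column of $V_kB_k^\top$ and the second is $\alpha_{k+1}v_{k+1}e_{k+1}^\top$. At the breakdown step I will handle the conventions explicitly. If $\beta_{k_b+1}=0$, then $u_{k_b+1}=0$, and $\alpha_{k_b+1}v_{k_b+1}$ is taken as $0$, so the last column is trivially consistent. If $\alpha_{k_b+1}=0$, the term simply vanishes.

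For part (b) I will argue by induction on $i$, tracking two claims together:
\[
u_i\in\mathcal K_i(M\Gamma^{-1},y),\qquad v_i\in\mathcal K_i(\Gamma^{-1}M,\Gamma^{-1}y),\qquad v_i=\Gamma^{-1}u_i-\beta_iv_{i-1}\ (\text{up to scaling}).
\]
The base case is $u_1=y/\beta_1$ and $v_1=\Gamma^{-1}y/(\alpha_1\beta_1)$. The key observation is the intertwining identity $\Gamma^{-1}(M\Gamma^{-1})^jy=(\Gamma^{-1}M)^j\Gamma^{-1}y$, so that $\Gamma^{-1}\mathcal K_i(M\Gamma^{-1},y)=\mathcal K_i(\Gamma^{-1}M,\Gamma^{-1}y)$. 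From $u_i\in\mathcal K_i(M\Gamma^{-1},y)$ and $v_{i-1}\in\mathcal K_{i-1}$ we get $v_i\in\mathcal K_i(\Gamma^{-1}M,\Gamma^{-1}y)$. Similarly, $Mv_i\in M\mathcal K_i(\Gamma^{-1}M,\Gamma^{-1}y)\subseteq\mathcal K_{i+1}(M\Gamma^{-1},y)$, because $M(\Gamma^{-1}M)^j\Gamma^{-1}y=(M\Gamma^{-1})^{j+1}y$. This gives $u_{i+1}\in\mathcal K_{i+1}(M\Gamma^{-1},y)$.

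To upgrade these inclusions to bases, I will use orthonormality, which is already stated in the text as $V_k^\top MV_k=I$ and $U_k^\top\Gamma^{-1}U_k=I$ for $k\le k_b$, inherited from the abstract GKB. Orthonormality implies linear independence. So $\{u_i\}_{i=1}^k$ are $k$ independent vectors in a space of dimension at most $k$, and therefore span it. The same argument applies to $\{v_i\}$. The main subtlety I expect is in this last step, together with the breakdown index. First, $M$ is only semidefinite, so I must make sure that $M$-orthonormality really gives linear independence in $\mathbb R^m$. It does: if $\sum c_iv_i=0$, then multiplying by $v_j^\top M$ gives $c_j=0$. Second, I need the nonzero normalizations $\alpha_i,\beta_i>0$ for all $i\le k$, and these are guaranteed by $k\le k_b$ and the definition \cref{break}.
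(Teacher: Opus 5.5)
\textbf{The problem is in your breakdown remark in part (a).} You write that if $\alpha_{k_b+1}=0$ ``the term simply vanishes''. This tacitly assumes $\Gamma^{-1}u_{k_b+1}=\beta_{k_b+1}v_{k_b}$. In Q-GKB, however, $\alpha_{k_b+1}=(s^\top Ms)^{1/2}=0$ with $s=\Gamma^{-1}u_{k_b+1}-\beta_{k_b+1}v_{k_b}$ only gives $Ms=0$, i.e.\ $[s]=0$ in the quotient, not $s=0$. The abstract recursion controls classes, not representatives.

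In fact $s\neq 0$ always in this case. The recursion gives $\Gamma v_i=(u_i-\beta_i\Gamma v_{i-1})/\alpha_i$, so $\Gamma v_{k_b}\in\operatorname{span}\{u_1,\dots,u_{k_b}\}$. That span is $\Gamma^{-1}$-orthogonal to $u_{k_b+1}$, hence $s^\top u_{k_b+1}=u_{k_b+1}^\top\Gamma^{-1}u_{k_b+1}=1$. So at $k=k_b$ the last column of $\Gamma^{-1}U_{k_b+1}=V_{k_b}B_{k_b}^\top+\alpha_{k_b+1}v_{k_b+1}e_{k_b+1}^\top$ is off by a nonzero vector of $\mathcal N(M)$.

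A concrete instance: take $G=\mathrm{diag}(1,0)$, $\Sigma=\Gamma=I$ (so $M=\mathrm{diag}(1,0)$) and $y=(1,1)^\top$. Then $\alpha_1=\beta_2=1/\sqrt2$, $v_1=(1,1)^\top$, $u_2=(1,-1)^\top/\sqrt2$ and $\alpha_2=0$, so $k_b=1$. Yet $\Gamma^{-1}u_2=(1,-1)^\top/\sqrt2\neq\beta_2v_1=(1,1)^\top/\sqrt2$.

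\textbf{This case is not exotic.} Suppose instead $\beta_{k_b+1}=0$. Then $MV_{k_b}=U_{k_b}\bar B$, where $\bar B$ is the leading $k_b\times k_b$ block of $B_{k_b}$; it is lower bidiagonal with positive diagonal, hence invertible. This forces $y=\beta_1u_1\in\mathcal R(M)$. So whenever $y\notin\mathcal R(M)$, which happens almost surely for singular $M$ and Gaussian noise, the iteration stops with $\alpha_{k_b+1}=0\neq\beta_{k_b+1}$.

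The paper's one-line ``stack the recurrences'' proof has the same blind spot, so the statement itself must be weakened rather than proved more carefully. What is true is the following.
\begin{itemize}
\item The three relations hold exactly for $k<k_b$.
\item They also hold exactly at $k=k_b$ when $\beta_{k_b+1}=0$, with your conventions.
\item When $\alpha_{k_b+1}=0$, the third relation holds only modulo $\mathcal N(M)$ in its last column. Equivalently, it holds exactly after left-multiplication by $M$ or by $G^\top$, since $\mathcal N(M)=\mathcal N(G^\top)$.
\end{itemize}

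\textbf{Part (b) is correct and unaffected,} since it only uses recursions with $\alpha_i,\beta_i>0$ for $i\le k$. It also differs mildly from the paper's route.
\begin{itemize}
\item The paper takes the statement for the $u_i$ from the abstract Krylov property of GKB for $\mathcal T$, using $\mathcal T\mathcal T^*=M\Gamma^{-1}$.
\item That theory only describes the classes $[v_i]$, so the paper proves the vector-level statement for the $v_i$ separately. It uses the eliminated three-term recurrence $\alpha_{i+1}\beta_{i+1}v_{i+1}=\Gamma^{-1}Mv_i-(\alpha_i^2+\beta_{i+1}^2)v_i-\alpha_i\beta_iv_{i-1}$.
\item Your coupled induction via the intertwining identities $\Gamma^{-1}(M\Gamma^{-1})^j=(\Gamma^{-1}M)^j\Gamma^{-1}$ and $M(\Gamma^{-1}M)^j\Gamma^{-1}=(M\Gamma^{-1})^{j+1}$ treats both families uniformly at the level of representatives. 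It borrows only orthonormality from the abstract theory, as the paper also does, and the final dimension count is identical.
\end{itemize}
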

\begin{proof}
Part (a) is directly obtained by stacking the recurrences in \cref{eq:practical-gGKB}. 

For part (b), from the theory of GKB, it produces two Krylov subspaces,
\begin{equation*}\label{Krylov_v}
  \mathcal K_{k}(\mathcal{T}^*\mathcal{T}, \mathcal{T}^*y) 
  = \mathrm{span}\{(\mathcal{T}^*\mathcal{T})^{i}\mathcal{T}^*y\}_{i=0}^{k-1}
  = \mathrm{span}\{[v_i]\}_{i=1}^{k} 
\end{equation*}
and 
\begin{equation*}
  \mathcal K_{k}(\mathcal{T}\mathcal{T}^*, y) 
  = \mathrm{span}\{(\mathcal{T}\mathcal{T}^*)^{i}y\}_{i=0}^{k-1}
  =  \mathrm{span}\{u_i\}_{i=1}^{k},
\end{equation*}
such that $\{u_i\}_{i=1}^{k}$ and $\{[v_i]\}_{i=1}^{k}$ are $\Gamma$- and $M$-orthonormal, respectively. From the properties of $\mathcal{T}$ and $\mathcal{T}^*$, we have 
\begin{equation*}
  \mathcal{T}^*\mathcal{T}[u] = [\Gamma^{-1}Mu], \quad
  \mathcal{T}\mathcal{T}^{*}v = M\Gamma^{-1}v
\end{equation*}
for any $[u]\in \mathbb{R}^m/\mathcal{N}(M)$ and $v\in\mathbb{R}^{m}$. Therefore, we have 
\begin{equation*}
  \mathcal K_{k}(\mathcal{T}^*\mathcal{T}, \mathcal{T}^*y) = \mathrm{span}\{[(\Gamma^{-1}M)^i\Gamma^{-1}y]\}_{i=0}^{k-1}, \quad
  \mathcal K_{k}(\mathcal{T}\mathcal{T}^*, y) = \mathrm{span}\{(M\Gamma^{-1})^iy\}_{i=0}^{k-1} .
\end{equation*}
This proves the desired property for $\{u_i\}_{i=1}^{k}$. Moreover, from the recursions \cref{eq:practical-gGKB}, we have  
\begin{equation*}
  \alpha_1\beta_1 v_1 = \Gamma^{-1}y, \quad
  \alpha_{i+1}\beta_{i+1}v_{i+1} = \Gamma^{-1}Mv_{i} - (\alpha_{i}^2+\beta_{i+1}^2)v_i - \alpha_{i}\beta_{i}v_{i-1}.
\end{equation*}
Combining these relations with mathematical induction, it can be verified that $v_i\in \mathcal K_k(\Gamma^{-1}M,\Gamma^{-1}y)$ for $1\leq i\leq k$. Since $\{v_i\}_{i=1}^{k}$ are $M$-orthonormal, it holds that 
\[\mathrm{dim}\left(\mathrm{span}\{v_i\}_{i=1}^{k}\right)=k=\mathrm{dim}\left(K_k(\Gamma^{-1}M,\Gamma^{-1}y)\right).\]
This implies that $\{v_i\}_{i=1}^k$ forms an $M$-orthonormal basis of $K_k(\Gamma^{-1}M,\Gamma^{-1}y)$.
\end{proof}

\Cref{thm:qsggkb-properties} shows that Q-GKB constructs mutually orthonormal bases of paired Krylov subspaces in the data space, analogous to the classical GKB process. On the other hand, it is not merely a formal modification of standard GKB, but is intrinsically adapted to the quotient-space geometry, enabling the iteration to proceed without explicitly resolving the null space of $M$.

\subsection{Data-informed Krylov subspace}
At the $k$th step of Q-GKB, define $\mathcal{S}_k=\mathrm{span}\{[v_i]\}_{i=1}^{k}$, which is a $k$-dimensional subspace of the data space. We approximate the solution of \cref{regu_quot} by restricting it to $\mathcal{S}_k$, i.e.,
\begin{equation}\label{proj_regu}
  \min_{[z]\in\mathcal{S}_k}\{\|Mz-y\|_{\Gamma^{-1}}^2 + \lambda\|[z]\|_{M}^2 \},
\end{equation}
The following result shows that the Q-GKB iteration captures the exact posterior mean after finitely many steps.

\begin{theorem}\label{thm:tikhonov-krylov}
Let \(k_b\) be the breakdown step of Q-GKB defined in \cref{break}. Then for every \(\lambda>0\), the unique minimizer \([z_\lambda]\) of \cref{regu_quot} belongs to \(\mathcal S_{k_b}=\operatorname{span}\{[v_i]\}_{i=1}^{k_b}\).
\end{theorem}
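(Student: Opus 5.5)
The plan is to show that the Krylov subspace $\mathcal S_{k_b}$ is invariant under $\mathcal T^*\mathcal T$ and contains $\mathcal T^*y$. Together with the normal equations in the quotient space, this will force $[z_\lambda]\in\mathcal S_{k_b}$.

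\emph{Normal equations.} By \Cref{prop:zlambda-variational}(a), $[z_\lambda]$ is the unique minimizer of \cref{regu_quot}. In operator form the functional is $\|\mathcal T[z]-y\|_{\Gamma^{-1}}^2+\lambda\|[z]\|_M^2$, so its minimizer satisfies
\[(\mathcal T^*\mathcal T+\lambda I)[z_\lambda]=\mathcal T^*y.\]
This can be checked directly. By \Cref{lem:adjoint-TM}, $\mathcal T^*\mathcal T[z]=[\Gamma^{-1}Mz]$, and $(M+\lambda\Gamma)z_\lambda=y$ gives $[\Gamma^{-1}Mz_\lambda+\lambda z_\lambda]=[\Gamma^{-1}y]$. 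Since $\mathcal T^*\mathcal T$ is self-adjoint and positive semidefinite on the data space, $\mathcal T^*\mathcal T+\lambda I$ is invertible there.

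\emph{Invariance at breakdown.} I will show $\mathcal T^*\mathcal T\,\mathcal S_{k_b}\subseteq\mathcal S_{k_b}$ and $\mathcal T^*y\in\mathcal S_{k_b}$. The second holds because $\mathcal T^*y=\beta_1\alpha_1[v_1]$. For the first, use the matrix relations of \Cref{thm:qsggkb-properties}(a) with $k=k_b$. These give $MV_{k_b}=U_{k_b+1}B_{k_b}$ and $\Gamma^{-1}U_{k_b+1}=V_{k_b}B_{k_b}^\top+\alpha_{k_b+1}v_{k_b+1}e_{k_b+1}^\top$, so
\[\Gamma^{-1}MV_{k_b}=V_{k_b}B_{k_b}^\top B_{k_b}+\alpha_{k_b+1}v_{k_b+1}e_{k_b+1}^\top B_{k_b}.\]
The last term is $\alpha_{k_b+1}\beta_{k_b+1}v_{k_b+1}e_{k_b}^\top$, which vanishes because $\alpha_{k_b+1}\beta_{k_b+1}=0$. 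In the case $\beta_{k_b+1}=0$ the convention $u_{k_b+1}=0$ must be handled; the relation still reads $\Gamma^{-1}U_{k_b}=V_{k_b}B_{k_b-1}'^\top$ with the square bidiagonal block. Passing to classes, $\mathcal T^*\mathcal T[v_j]\in\mathcal S_{k_b}$ for every $j$.

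\emph{Conclusion.} $\mathcal S_{k_b}$ is a finite-dimensional subspace invariant under $A:=\mathcal T^*\mathcal T+\lambda I$, and $A$ is injective. Hence $A$ restricted to $\mathcal S_{k_b}$ is a bijection of $\mathcal S_{k_b}$ onto itself. So $A^{-1}\mathcal T^*y\in\mathcal S_{k_b}$, i.e.\ $[z_\lambda]\in\mathcal S_{k_b}$. An alternative route is to write $A^{-1}$ as a polynomial in $\mathcal T^*\mathcal T$ (Cayley--Hamilton) and use $\mathcal S_{k_b}=\mathcal K_{k_b}(\mathcal T^*\mathcal T,\mathcal T^*y)$ from \Cref{thm:qsggkb-properties}(b).

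The main obstacle I expect is the bookkeeping at breakdown, specifically the two cases $\alpha_{k_b+1}=0$ versus $\beta_{k_b+1}=0$. In the first case, I must check that $\alpha_{k_b+1}=0$ means $[s]=0$, i.e.\ $s\in\mathcal N(M)$, so that the residual term is zero as a quotient class even if $s\neq0$ as a vector. In the second case I must interpret the relations with $u_{k_b+1}=0$. In both cases the residual is zero in $\mathbb R^m/\mathcal N(M)$, which is exactly what the quotient-space argument needs.
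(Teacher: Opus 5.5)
Your proof is correct and follows essentially the same route as the paper. Both arguments rest on three facts: $\mathcal T^*y\in\mathcal S_{k_b}$, $\mathcal S_{k_b}$ is invariant under $\mathcal T^*\mathcal T$, and $(\mathcal T^*\mathcal T+\lambda I)^{-1}$ preserves that invariant subspace. The paper asserts the invariance from maximality of the Krylov subspace and uses Cayley--Hamilton to write the resolvent as a polynomial; you instead verify invariance directly from the bidiagonal relations, correctly observing that when $\alpha_{k_b+1}=0$ the leftover vector lies in $\mathcal N(M)$ and so vanishes only as a class. You then conclude by noting that an injective map of the finite-dimensional space $\mathcal S_{k_b}$ into itself is onto.
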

\begin{proof}
  Since the objective function of \cref{regu_quot} can be equivalently written as $\|\mathcal{T}[z]-y\|_{\Gamma^{-1}}^2 + \lambda\|[z]\|_{M}^2$, its unique minimizer is the equivalence class
  \[[z_\lambda]=(\mathcal T^\ast\mathcal T+\lambda I)^{-1}\mathcal T^\ast y .\]
  Since \(\mathcal S_{k_b}\) is the maximal Krylov subspace generated by \(\mathcal T^\ast y\) under \(\mathcal T^\ast\mathcal T\), which means that $\mathcal S_{k_b}=\mathcal{K}_{\infty}(\mathcal T^\ast\mathcal T, \mathcal T^\ast y)$, it is invariant under \(\mathcal T^\ast\mathcal T\) and contains all vectors of the form \(p(\mathcal T^\ast\mathcal T)\mathcal T^\ast y\), where \(p\) is a polynomial. Note that $\mathcal T_{\lambda}:=\mathcal T^\ast\mathcal T+\lambda I$ is a positive self-adjoint operator on $\mathbb{R}^{m}/\mathcal{N}(M)$. Let $q=\mathrm{dim}(\mathbb{R}^{m}/\mathcal{N}(M))$ and let $A\in\mathbb{R}^{q\times q}$ be a matrix representation of $\mathcal T_{\lambda}$ with respect to a basis of $\mathbb{R}^{m}/\mathcal{N}(M)$. By the Cayley--Hamilton theorem, it holds that $p_{A}(\mathcal T_{\lambda})=0$, where $p_{A}(\mu) = \mathrm{det}(\mu I-A)=\sum_{i=0}^{q}c_i \mu^i$ is the characteristic polynomial of $A$, satisfying that $c_q=1$ and $c_0=(-1)^{q}\mathrm{det}(A)\neq 0$ since $A$ is SPD. It follows that 
  \[\mathcal{T}_{\lambda}\sum_{i=1}^{l}c_{i}(\mathcal T^\ast\mathcal T + \lambda I)^{i-1}=-c_0 I,\] 
  which implies that $\mathcal{T}_{\lambda}^{-1}$ can be represented by a polynomial of \(\mathcal T^\ast\mathcal T\). Hence there exists a polynomial \(p_\lambda\) such that
  \[(\mathcal T^\ast\mathcal T+\lambda I)^{-1}\mathcal T^\ast y = 
  p_\lambda(\mathcal T^\ast\mathcal T)\mathcal T^\ast y\in \mathcal S_{k_b}.\]
  This proves the claim.
\end{proof}

\Cref{thm:tikhonov-krylov} implies that the posterior mean $x_{\lambda}$ can be approximated within the subspace $\iota(\mathcal S_k)$ as $k$ increases. We refer to $\iota(\mathcal{S}_k)$ as the $k$-dimensional \emph{data-informed Krylov subspace}. Suppose there are $t$ distinct nonzero generalized eigenvalues of $(M,\,\Gamma)$ with corresponding $M$-orthogonal eigenspaces $\mathcal{G}_1,\dots,\mathcal{G}_{t}$. For any closed subspace $\mathcal{G}$ of $\mathbb{R}^{m}$, denote by $P_{\mathcal{G}}$ the $M$-orthogonal projector onto $\mathcal{G}$, i.e., $P_{\mathcal{G}}$ satisfies $P_{\mathcal{G}}^2=P_{\mathcal{G}}$ and $MP_{\mathcal{G}}^{\top}=P_{\mathcal{G}}M$. The following result shows that, as the iteration proceeds, the subspace $\iota(\mathcal{S}_k)$ progressively captures the informative directions of the dominant data-informed subspace.

\begin{theorem}\label{thm:qgkb-lanczos}
  Assume that Q-GKB runs for $k$ steps with $k\leq k_b$, and the nonzero 2-orthonormal eigenpairs of $B_{k}^{\top}B_{k}$ are $\left\{(\theta_{i}^{(k)},\,s_{i}^{(k)})\right\}_{i=1}^{k}$. Then the following statements hold.
\begin{enumerate}
  \item[(a)] The pair $\left(\theta_{i}^{(k)}, p_{i}^{(k)}\right):=\left(\theta_{i}^{(k)},\iota\left([V_{k}s_{i}^{(k)}]\right)\right)$ gradually approximates a nonzero generalized eigenpair of $(H,\,\Sigma^{-1})$ as $k$ increases, and the approximation becomes exact at $k=k_b$.
  \item[(b)] At the breakdown step, $\{p_{i}^{(k_b)}\}_{i=1}^{k_b}$ are $\Sigma^{-1}$-orthonormal generalized eigenvectors of $(H,\,\Sigma^{-1})$. Moreover, each $p_i^{(k_b)}$ is the image under $\iota$ of a generalized eigenvector in one of those $\mathcal G_j$ for which $P_{\mathcal G_j}\Gamma^{-1}y\neq 0$. 
\end{enumerate}
\end{theorem}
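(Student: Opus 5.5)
The strategy is to recognize that Q-GKB is the Lanczos process applied to the pencil $(M,\Gamma)$, written in the $M$-inner product. From there, (a) reduces to the Rayleigh--Ritz interpretation of Lanczos, and (b) reduces to the finite-termination property of Lanczos. The spectral correspondence of \Cref{thm:isom}(c) then transfers everything to the pencil $(H,\Sigma^{-1})$.

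\emph{Step 1: a Lanczos relation.} From \cref{eq:bidiag-relations} we have $MV_k=U_{k+1}B_k$ and $\Gamma^{-1}U_{k+1}=V_kB_k^\top+\alpha_{k+1}v_{k+1}e_{k+1}^\top$. Combining them gives
\[
\Gamma^{-1}MV_k=\Gamma^{-1}U_{k+1}B_k=V_kB_k^\top B_k+\alpha_{k+1}\beta_{k+1}v_{k+1}e_k^\top .
\]
Since $V_k^\top MV_k=I$ and $V_k^\top Mv_{k+1}=0$, it follows that $V_k^\top M\Gamma^{-1}MV_k=B_k^\top B_k$. Thus $B_k^\top B_k$ is the Rayleigh--Ritz projection of the operator $\mathcal T^*\mathcal T=[\Gamma^{-1}M\,\cdot\,]$ onto $\mathcal S_k$ in the $M$-inner product.

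Consequently, the Ritz pairs $(\theta_i^{(k)},V_ks_i^{(k)})$ satisfy a residual identity:
\[
\Gamma^{-1}M(V_ks_i^{(k)})-\theta_i^{(k)}V_ks_i^{(k)}=\alpha_{k+1}\beta_{k+1}(e_k^\top s_i^{(k)})v_{k+1}.
\]
Its $M$-norm is $\alpha_{k+1}\beta_{k+1}|e_k^\top s_i^{(k)}|$, which is the standard Lanczos a posteriori quantity. At $k=k_b$ we have $\alpha_{k_b+1}\beta_{k_b+1}=0$, so the residual vanishes. The case $\beta_{k_b+1}=0$ must be checked separately, because there $v_{k_b+1}$ may be undefined. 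In that case the relation reads $MV_{k_b}=U_{k_b}\tilde B$, and the residual identity still holds with zero right-hand side. Hence $\Gamma^{-1}M\hat w=\theta\hat w$ modulo $\mathcal N(M)$ for $\hat w=V_{k_b}s_i^{(k_b)}$.

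\emph{Step 2: transfer to $(M,\Gamma)$ and $(H,\Sigma^{-1})$.} The identity $\Gamma^{-1}M\hat w-\theta\hat w\in\mathcal N(M)=\mathcal N(G^\top)$ is not quite $M\hat w=\theta\Gamma\hat w$. To handle this, apply $\iota$ directly as in the proof of \Cref{thm:isom}(c). Set $p=\Sigma G^\top\hat w$. Then
\[
Hp=G^\top\Gamma^{-1}M\hat w=G^\top(\theta\hat w+n)=\theta G^\top\hat w=\theta\Sigma^{-1}p, \qquad n\in\mathcal N(G^\top).
\]
So $(\theta_i^{(k_b)},p_i^{(k_b)})$ is an exact generalized eigenpair of $(H,\Sigma^{-1})$. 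Moreover, $\theta_i^{(k_b)}>0$ because $B_{k_b}$ has full column rank, its diagonal entries $\alpha_i$ being positive.

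For $k<k_b$, the same computation gives
\[
Hp_i^{(k)}-\theta_i^{(k)}\Sigma^{-1}p_i^{(k)}=\alpha_{k+1}\beta_{k+1}(e_k^\top s_i^{(k)})\,G^\top v_{k+1}.
\]
Measured in the $\Sigma$-norm, this residual has size $\alpha_{k+1}\beta_{k+1}|e_k^\top s_i^{(k)}|$, because $\|G^\top v\|_\Sigma=\|[v]\|_M$. This is the precise sense of ``gradually approximates'' in (a), and I would state (a) in this residual form. I would also note that $\mathcal S_k$ is nested and increasing, so the Ritz values are monotone by Cauchy interlacing.

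\emph{Step 3: orthonormality and the eigenspace characterization in (b).} Orthonormality is immediate. The vectors $s_i^{(k_b)}$ are $2$-orthonormal and $V_{k_b}$ is $M$-orthonormal, so the vectors $[V_{k_b}s_i]$ are $M$-orthonormal. By \Cref{thm:isom}(a), $\iota$ maps them to $\Sigma^{-1}$-orthonormal vectors.

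For the eigenspace statement, expand $[\Gamma^{-1}y]=\sum_j[P_{\mathcal G_j}\Gamma^{-1}y]$ in the $M$-orthogonal eigenspace decomposition of the self-adjoint operator $\mathcal T^*\mathcal T$ on the quotient space. There, $\mathcal T^*\mathcal T$ acts as the scalar $\hat\mu_j$ on $[\mathcal G_j]$. The Krylov space $\mathcal K_\infty(\mathcal T^*\mathcal T,\mathcal T^*y)=\mathcal S_{k_b}$ is therefore spanned by the nonzero components $[P_{\mathcal G_j}\Gamma^{-1}y]$, since a Vandermonde argument on distinct eigenvalues separates them. Hence $k_b$ equals the number of $j$ with $P_{\mathcal G_j}\Gamma^{-1}y\ne0$, and each exact Ritz vector, being an eigenvector of $\mathcal T^*\mathcal T$ inside $\mathcal S_{k_b}$, must be (up to scaling) one of these components.

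Two technical points need care in this step. One must check that the $M$-orthogonal projector $P_{\mathcal G_j}$ is compatible with the quotient classes. One must also rule out a component of $\Gamma^{-1}y$ lying in $\mathcal N(M)$; such a component is invisible in the quotient space and therefore harmless. I expect this bookkeeping of $\mathcal N(M)$ against the projectors, together with the $\beta_{k_b+1}=0$ breakdown case in Step 1, to be the main obstacle.
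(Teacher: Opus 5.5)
Your proposal is correct and has the same skeleton as the paper's proof: the Lanczos residual identity from \cref{eq:bidiag-relations}, transfer to $(H,\Sigma^{-1})$ through $\iota$, and a Vandermonde argument on the components $P_{\mathcal G_j}\Gamma^{-1}y$ for (b).

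Where you differ is in how you treat the null space, and on this point your route is the more careful one. The paper asserts that at breakdown $(\theta_i^{(k_b)},V_{k_b}s_i^{(k_b)})$ is an exact generalized eigenpair of $(M,\Gamma)$, and then cites \Cref{thm:isom}(c). In (b) it writes $\widetilde w_0=\sum_j P_{\mathcal G_j}\Gamma^{-1}y$ and concludes $V_{k_b}s_i^{(k_b)}\in\mathcal G$. Both claims can fail in $\mathbb R^m$ and hold only modulo $\mathcal N(M)$. The mechanism is as follows:
\begin{itemize}
\item if $y\notin\mathcal R(G)$, then $\Gamma^{-1}y$ has a nonzero $\mathcal N(M)$-component;
\item the breakdown is then $\alpha_{k_b+1}=0$ with $\beta_{k_b+1}\neq 0$;
\item the vector $\Gamma^{-1}u_{k_b+1}-\beta_{k_b+1}v_{k_b}$, which plays the role of $\alpha_{k_b+1}v_{k_b+1}$, is a nonzero element of $\mathcal N(M)$.
\end{itemize}
For instance, $G=(1,0)^\top$, $\Sigma=1$, $\Gamma=I_2$, $y=(1,1)^\top$ gives $k_b=1$, $\theta_1^{(1)}=1$ and $v_1=(1,1)^\top$, and $Mv_1\neq\theta_1^{(1)}\Gamma v_1$.

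Your argument avoids this in two ways. First, you compute directly $Hp=G^\top\Gamma^{-1}M\hat w=\theta\,G^\top\hat w=\theta\,\Sigma^{-1}p$, using $\mathcal N(M)=\mathcal N(G^\top)$. Second, you run the Vandermonde argument on the classes $[P_{\mathcal G_j}\Gamma^{-1}y]$ in the quotient. Together these prove the statement exactly as written. As a bonus, you also get the residual $\alpha_{k+1}\beta_{k+1}|e_k^\top s_i^{(k)}|$ for $(H,\Sigma^{-1})$ directly, measured in the $\Sigma$-norm.

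One correction to your Step~1 bookkeeping: you attach the difficulty to the wrong breakdown case.
\begin{itemize}
\item When $\beta_{k_b+1}=0$, one has $\Gamma^{-1}MV_{k_b}=V_{k_b}B_{k_b}^\top B_{k_b}$ exactly in $\mathbb R^m$, so no caveat is needed there.
\item The ``modulo $\mathcal N(M)$'' caveat is needed precisely when $\alpha_{k_b+1}=0$. In that case your sentence ``so the residual vanishes'' holds only in the quotient.
\end{itemize}
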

\begin{proof}
  (a) Using the matrix form relations of Q-GKB, we have
  \begin{align*}
    \Gamma^{-1}MV_{k} = \Gamma^{-1}U_{k+1}B_{k}
    = (V_{k}B_{k}^{\top}+\alpha_{k+1}v_{k+1}e_{k+1}^{\top})B_{k}.
  \end{align*}
  A simple calculation leads to 
  \begin{align*}
    MV_{k}s_{i}^{(k)} - \theta_{i}^{(k)}\Gamma V_{k}s_{i}^{(k)}
    &= \Gamma(V_{k}B_{k}^{\top}+\alpha_{k+1}v_{k+1}e_{k+1}^{\top})B_{k}s_{i}^{(k)} - \theta_{i}^{(k)}\Gamma V_{k}s_{i}^{(k)} \\
    &= \Gamma V_{k}(B_{k}^{\top}B_{k}s_{i}^{(k)}-\theta_{i}^{(k)}s_{i}^{(k)}) + \alpha_{k+1}\beta_{k+1}\Gamma v_{k+1}e_{k}^{\top}s_{i}^{(k)} \\
    &= \alpha_{k+1}\beta_{k+1}\Gamma v_{k+1}e_{k}^{\top}s_{i}^{(k)}.
  \end{align*}
  Therefore, the pair $\left(\theta_{i}^{(k)}, V_{k}s_{i}^{(k)}\right)$ gradually approximates a nonzero generalized eigenpair of $(M,\Gamma)$ as $k$ increases, and at the breakdown step, $\left(\theta_{i}^{(k_b)}, V_{k}s_{i}^{(k_b)}\right)$ is an exact eigenpair since $\alpha_{k_b+1}\beta_{k_b+1}=0$. By \Cref{thm:isom}, $\left(\theta_{i}^{(k)},\iota\left([V_{k}s_{i}^{(k)}]\right)\right)$ gradually approximates a nonzero eigenpair of $(H, \Sigma^{-1})$, which becomes an exact one at the breakdown step.

  (b) Without loss of generality, we suppose that only the first $q$ elements in $\{P_{\mathcal{G}_1}\Gamma^{-1}y,\dots,P_{\mathcal{G}_t}\Gamma^{-1}y\}$ are nonzero. Noticing that $\{V_{k_b}s_{i}^{(k_b)}\}_{i=1}^{k_b}$ are mutually $M$-orthonormal, by \Cref{thm:isom} (a) and (c), we only need to prove that the $k_b$ vectors $\hat{p}_{i}^{(k_b)}:=V_{k_b}s_{i}^{(k)}$ belong separately to the $M$-orthogonal subspaces $\mathcal{G}_{1},\dots,\mathcal{G}_q$. Since $\theta_{i}^{(k_b)}>0$ have different values and $\calG_i$ are mutually $M$-orthogonal, these $\hat{p}_{i}^{(k_b)}$ must belong to different subspaces among $\{\mathcal{G}_i\}_{i=1}^{t}$. Therefore, we only need to prove $P_{\calG}\hat{p}_{i}^{(k_b)}=\hat{p}_{i}^{(k_b)}$ for each $1\leq i\leq q$, where $\calG=\calG_1\oplus\cdots\oplus\calG_q$ and $P_{\calG}$ is the $M$-orthogonal projector onto $\calG$. Suppose $W_i$ is an $M$-orthonormal matrix whose columns span $\mathcal{G}_i$. Then $MW_i=\mu_{i}\Gamma W_i$ and $P_{\mathcal{G}_i}=W_{i}W_{i}^{\top}M$, which leads to 
  \[\Gamma^{-1}MP_{\mathcal{G}_i}=\Gamma^{-1}MW_{i}W_{i}^{\top}M=\mu_i W_{i}W_{i}^{\top}M=\mu_iP_{\mathcal{G}_i}.\] 
  Therefore, it holds that 
  \[\Gamma^{-1}Mx 
    = \Gamma^{-1}M\sum_{i=1}^{t}P_{\mathcal{G}_i}x
    = \sum_{i=1}^{t}\mu_{i}P_{\mathcal{G}_i}x, \quad  \forall \, x\in\mathbb{R}^{m} .\]
  For any $j\geq 0$, it follows that 
  \begin{equation*}
    \widetilde{w}_i:=(\Gamma^{-1}M)^{i}\Gamma^{-1}y = \left(\sum_{i=1}^{t}\mu_{i}P_{\mathcal{G}_i} \right)^i \Gamma^{-1}y
    = \sum_{i=1}^{t}\mu_{i}^{j}P_{\mathcal{G}_i}\Gamma^{-1}y
    = \sum_{i=1}^{q}\mu_{i}^{j}P_{\mathcal{G}_i}\Gamma^{-1}y .
  \end{equation*}
  By \Cref{thm:qsggkb-properties}, we have $\hat{p}_{i}^{(k_b)}\in\mathcal{K}_{k_b}(\Gamma^{-1}M,\Gamma^{-1}y)=\mathrm{span}\{\widetilde{w}_i\}_{i=0}^{k_b-1}$, and
	\begin{equation*}
		(\widetilde{w}_{0},\dots,\widetilde{w}_{k_b-1}) = (P_{\mathcal{G}_1}\Gamma^{-1}y, \dots, P_{\mathcal{G}_q}\Gamma^{-1}y)
		\begin{pmatrix}
			1 & \mu_{1} & \cdots & \mu_{1}^{k_b-1} \\
			1 & \mu_{2} & \cdots & \mu_{2}^{k_b-1} \\
			\vdots & \vdots & \ddots & \vdots \\
			1 & \mu_{q} & \cdots & \mu_{q}^{k_b-1}
			\end{pmatrix} =: \widetilde{G}T_{k_b},
	\end{equation*} 
	where $\widetilde{G}=(P_{\mathcal{G}_1}\Gamma^{-1}y, \dots, P_{\mathcal{G}_q}\Gamma^{-1}y)$. Since the $q$-by-$q$ leading part of $T_{k_b}$ is a Vandermonde matrix with $\mu_i\neq\mu_j$, the rank of $T_{k_b}$ is at most $q$. Thus the rank of $\widetilde{W}:=(w_{0},\dots,w_{k_b-1})$ is at most $q$, which is achieved if $k_b=q$. This implies that the expansion of $\mathcal K_{k}(\Gamma^{-1}M,\Gamma^{-1}y)$ ends at $k=q$, hence $k_b =q$. Since $T_{k_b}=T_{q}$ is nonsingular, it holds that $\calR(\widetilde{W})=\calR(\widetilde{G})$, and we can write $\hat{p}_{i}^{(k_b)}$ as $\hat{p}_{i}^{(k_b)}=\widetilde{G}c\in\calG$ with a nonzero $c\in\mathbb{R}^{q}$. Now we immediately obtain $P_{\calG}\hat{p}_{i}^{(k_b)}=\hat{p}_{i}^{(k_b)}$, which is the desired result.
\end{proof}

By \Cref{thm:qgkb-lanczos}, the subspace $\iota(\mathcal S_k)$ progressively captures one component from each generalized eigenspace $\mathcal G_j$ for which $P_{\mathcal G_j}\Gamma^{-1}y \neq 0$. Moreover, those components associated with larger generalized eigenvalues are approximated more rapidly, due to the well-known extremal eigenvalue convergence properties of Lanczos-type methods \cite{saad2011numerical}. Since the observation data takes the form $y = Gx + \eta$ with $\eta\sim\mathcal N(0,\Gamma)$, we have $\Gamma^{-1}y = \Gamma^{-1}Gx + \Gamma^{-1}\eta$ where $\Gamma^{-1}\eta \sim \mathcal N(0,\Gamma^{-1})$. Thus, we have $P_{\mathcal G_i}\Gamma^{-1}y \neq 0$ almost surely for all $i=1,\dots,t$, as the Gaussian distribution has full support. For ill-posed problems, the matrix $M=G\Sigma G^\top$ is typically severely ill-conditioned, and the generalized eigenvalues of $(M,\,\Gamma)$ tend to decay gradually toward zero without noticeable gaps. In such cases, eigenvalue multiplicities are typically small (often equal to one in practice). This indicates that, in typical settings, the Q-GKB iteration progressively captures all directions of the dominant data-informed subspaces through $\iota(\mathcal S_k)$.

Since $V_k s_i^{(k)}$ progressively approximates the dominant generalized eigenvectors of $(M,\Gamma)$, this motivates the following low-rank approximation of $H$, analogous to \cref{Hk_1}. After $k$ steps of Q-GKB, define the rank-$k$ symmetric positive semidefinite matrix
\begin{equation}
  \widehat{H}_k = G^{\top} \left(\sum_{i=1}^{k}\theta_{i}^{(k)}(V_{k}s_{i}^{(k)})(V_{k}s_{i}^{(k)})^{\top}\right)G = G^{\top}V_{k}B_{k}^{\top}B_{k}V_{k}^{\top}G .
\end{equation}
By \Cref{thm:qgkb-lanczos} and its proof, if all nonzero generalized eigenvalues of $(M,\Gamma)$ are simple and $P_{\mathcal G_i}\Gamma^{-1}y \neq 0$ for $i=1,\dots,l$, while $P_{\mathcal G_i}\Gamma^{-1}y = 0$ for $i>k$, then the breakdown occurs at $k_b=l$ and $\widehat{H}_l = H_l$. Replacing $H=G^{\top}\Gamma^{-1}G$ in \cref{post_distr} by $\widehat{H}_k$, we define the \emph{approximate Gaussian posterior} with mean and covariance given by
\begin{equation}\label{approx_post_distr}
  \widehat{x}_{\lambda}^{(k)} = \widehat{C}_{\lambda}^{(k)}G^{\top}\Gamma^{-1}y, 
  \qquad
  \widehat{C}_{\lambda}^{(k)} = (\widehat{H}_k+\lambda\Sigma^{-1})^{-1}.
\end{equation}
In the next section, we show how to simultaneously estimate the hyperparameter $\lambda$ and compute the approximate posterior mean and covariance efficiently within the Q-GKB iteration.

\section{Iterative approximation of the Bayesian posterior}\label{sec4}
The goal of this section is to construct efficient approximations of the Bayesian posterior based on Q-GKB interation, and to analyze the accuracy of these approximations. We integrate empirical Bayesian inference into the Q-GKB iteration to obtain an efficient estimate of $\lambda$ within the data-informed Krylov subspaces. Given $\lambda=\lambda_k$, we then compute the approximations $\widehat{x}_{\lambda}^{(k)}$ and $\widehat{C}_{\lambda}^{(k)}$ using byproducts of the Q-GKB iteration.

\subsection{Approximate the Bayesian posterior by Q-GKB}
In the empirical Bayesian (EB) inference, the hyperparameter $\lambda$ is treated as a random variable and correspondingly, the prior of $x$ is conditioned on $\lambda$. Suppose the prior of $\lambda$ is $\pi(\lambda)$. Since $x\,|\,\lambda\sim\mathcal{N}(0,\lambda^{-1}\Sigma)$ and $y\,|\,(x,\lambda)\sim\mathcal{N}(Gx,\Gamma)$, using Bayes' formula, we have 
\begin{align*}
  \pi(x,\lambda\,|\,y) 
  &= \frac{\pi(y\,|\,x,\lambda)\pi(x\,|\,\lambda)\pi(\lambda)}{\pi(y)} 
  \propto \frac{\pi(\lambda)\lambda^{n/2}\exp\left(-\frac{1}{2}\|y-Gx\|_{\Gamma^{-1}}^2-\frac{\lambda}{2}\|x\|_{\Sigma^{-1}}^2 \right)}{\det(\Gamma)^{1/2}\det(\Sigma)^{1/2}} .
\end{align*}
Integrating out $x$, the marginal posterior of $\lambda$ is
\begin{align*}
  \pi(\lambda\,|\,y) 
  &= \int_{\mathbb{R}^{n}} \pi(x,\lambda\,|\,y)\mathrm{d}x \\
  &\propto \pi(\lambda)\lambda^{n/2} \int_{\mathbb{R}^{n}} \exp\left(-\frac{1}{2}\|y-Gx\|_{\Gamma^{-1}}^2-\frac{\lambda}{2}\|x\|_{\Sigma^{-1}}^2 \right)\mathrm{d}x \\
  &\propto \pi(\lambda)\det(\Gamma+\lambda^{-1}G\Sigma G^{\top})^{-1/2}\exp\left(-\frac{1}{2}y^{\top}(\Gamma+\lambda^{-1}G\Sigma G^{\top})^{-1}y \right) ,
\end{align*}
see e.g., \cite{franklin1970well,lehtinen1989linear}.
Notice that $\pi(\lambda\,|\,y) = \frac{\pi(y\,|\,\lambda)\pi(\lambda)}{\pi(y)} \propto \pi(\lambda)\pi(y\,|\,\lambda)$. In the EB method, the hyperparameter $\lambda$ is typically estimated by maximizing the marginal likelihood $\pi(y\,|\,\lambda)$, rather than the posterior $\pi(\lambda\,|\,y)$. Ignoring constant terms, minimizing the negative log-marginal likelihood $-\log \pi(y\,|\,\lambda)$ is equivalent to solving the following optimization problem:
\begin{equation*}
  \min_{\lambda>0}\mathcal{L}(\lambda) 
  := \log \det(\Gamma+\lambda^{-1}M) + y^{\top}(\Gamma+\lambda^{-1}M)^{-1}y.
\end{equation*}
Although this problem is formulated in the data space, evaluating $\mathcal{L}(\lambda)$ remains computationally prohibitive for large $m$ due to the cost of determinant and matrix inversion operations. To address this, we integrate the EB approach into the Q-GKB iteration, enabling an efficient, adaptive estimation of $\lambda$ within low-dimensional data-informed Krylov subspaces.

After $k$ steps of Q-GKB, define the symmetric positive semidefinite matrix 
\begin{equation}
  M_{k} = MV_{k}V_{k}^{\top}M ,
\end{equation}
which has rank at most $k$. For the $M$-orthogonal projector
\begin{align*}
  \Pi_k: \mathbb{R}^{m}/\mathcal{N}(M) &\longrightarrow \mathcal{S}_k \\
  [z] &\longmapsto \sum_{i=1}^{k}\langle [z], [v_i] \rangle_{M} [v_i] ,
\end{align*}
it holds that 
\begin{equation*}
  \mathcal{T}(\Pi_{k}[z]) = \mathcal{T}([V_{k}V_{k}^{\top}Mz])
  = MV_{k}V_{k}^{\top}Mz = M_kz.
\end{equation*}
This implies that $M_{k}$ is the compression of $M$ onto the Krylov subspace $\mathcal{S}_k$. Accordingly, $M_k$ provides a natural low-rank approximation of $M$. Using the relation $M V_k = U_{k+1} B_k$, we obtain
\begin{equation}
  M_k = MV_{k}V_{k}^{\top}M=U_{k+1}B_{k}B_{k}^{\top}U_{k+1}^{\top}.
\end{equation}
Thus $M_{k}$ can be formed using only the small scale matrices $B_{k}$ and $U_{k+1}$ generated by Q-GKB. Replacing $M$ by $M_k$ in $\mathcal{L}(\lambda)$ yields the $k$th approximate negative log-marginal likelihood:
\begin{equation}
  \mathcal{L}^{(k)}(\lambda) = \log \det(\Gamma+\lambda^{-1}M_k) + y^{\top}(\Gamma+\lambda^{-1}M_k)^{-1}y.
\end{equation}
The following result shows that the approximate marginal likelihood can be computed efficiently using only the singular value decomposition (SVD) of $B_{k}$.

\begin{proposition}\label{prop:Lk}
  Suppose that the compact form SVD of $B_{k}$ is $B_{k}=PSQ^{\top}$, where $P\in\mathbb{R}^{(k+1)\times k}$ has 2-orthonormal columns and $S=\mathrm{diag}(s_1,\dots,s_k)$ with $s_i>0$. Then 
  \begin{equation}\label{L_k}
    \mathcal{L}^{(k)}(\lambda) = \log\det(\Gamma) + \sum_{i=1}^{k}\log\left(1+\frac{s_i^2}{\lambda}\right)
    + \beta_1^2\left(1-\sum_{i=1}^{k}\frac{s_i^2}{s_i^2+\lambda}p_{1i}^2 \right),
  \end{equation}
  where $(p_{11},\dots,p_{1k})$ is the first row of $P$.
\end{proposition}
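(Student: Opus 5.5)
The plan is to reduce both terms of $\mathcal{L}^{(k)}(\lambda)$ to small matrices by combining $M_k=U_{k+1}B_kB_k^\top U_{k+1}^\top$ with the $\Gamma^{-1}$-orthonormality $U_{k+1}^\top\Gamma^{-1}U_{k+1}=I$ and the relation $y=\beta_1U_{k+1}e_1$ from \Cref{thm:qsggkb-properties}. Substituting the SVD $B_k=PSQ^\top$ gives $M_k=(U_{k+1}P)S^2(U_{k+1}P)^\top$. Set $Z:=U_{k+1}P\in\mathbb{R}^{m\times k}$. Then $Z^\top\Gamma^{-1}Z=P^\top P=I_k$, and
\[
\Gamma+\lambda^{-1}M_k=\Gamma+\lambda^{-1}ZS^2Z^\top .
\]

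\textbf{Determinant.} Factor out $\Gamma$ and apply Sylvester's determinant identity:
\[
\det(\Gamma+\lambda^{-1}ZS^2Z^\top)=\det(\Gamma)\det(I_k+\lambda^{-1}S^2Z^\top\Gamma^{-1}Z)=\det(\Gamma)\prod_{i=1}^k(1+s_i^2/\lambda).
\]
Taking logarithms yields the first two terms of \cref{L_k}.

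\textbf{Quadratic term.} Apply the Woodbury identity, as in \Cref{prop:post1}:
\[
(\Gamma+\lambda^{-1}ZS^2Z^\top)^{-1}=\Gamma^{-1}-\Gamma^{-1}Z\bigl(\lambda S^{-2}+Z^\top\Gamma^{-1}Z\bigr)^{-1}Z^\top\Gamma^{-1}.
\]
Since $Z^\top\Gamma^{-1}Z=I_k$, the middle factor is the diagonal matrix $\mathrm{diag}\bigl(s_i^2/(s_i^2+\lambda)\bigr)$. Next compute $y^\top\Gamma^{-1}y=\beta_1^2$ and
\[
Z^\top\Gamma^{-1}y=\beta_1P^\top U_{k+1}^\top\Gamma^{-1}U_{k+1}e_1=\beta_1P^\top e_1=\beta_1(p_{11},\dots,p_{1k})^\top.
\]
Substituting these gives $\beta_1^2\bigl(1-\sum_i \frac{s_i^2}{s_i^2+\lambda}p_{1i}^2\bigr)$, which is the last term of \cref{L_k}.

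\textbf{Main obstacle.} The delicate point is the breakdown case $k=k_b$ with $\beta_{k_b+1}=0$. There $u_{k+1}=0$, so $U_{k+1}^\top\Gamma^{-1}U_{k+1}=I$ fails; it equals $\mathrm{diag}(I_k,0)$. In that case the last row of $B_k$ is zero, and I would handle it by dropping the last row and column. Work with $U_k$ and the square bidiagonal $\tilde B_k$, for which $MV_k=U_k\tilde B_k$ and $U_k^\top\Gamma^{-1}U_k=I$ both hold. The SVD of $B_k$ then has $P$ with zero last row, so the same $s_i$ and first-row entries $p_{1i}$ appear, and the formula is unchanged.

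Two further checks are needed. The positivity $s_i>0$, so that $S^{-2}$ exists, follows because $B_k$ has full column rank, its diagonal entries $\alpha_i$ being positive. Alternatively, one can avoid $S^{-2}$ by using the push-through form $Z(I+\lambda^{-1}S^2)^{-1}\lambda^{-1}S^2Z^\top$.
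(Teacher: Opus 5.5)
Your proof is correct and follows essentially the same route as the paper: Sylvester's determinant identity for the log-determinant and a Woodbury/push-through identity for the quadratic term, both resting on $U_{k+1}^\top\Gamma^{-1}U_{k+1}=I$ and $y=\beta_1U_{k+1}e_1$. The differences are cosmetic. The paper whitens with $W=\Gamma^{-1/2}U_{k+1}$ and inserts the SVD of $B_k$ only at the end, whereas you absorb $P$ into $Z=U_{k+1}P$ from the start. Your treatment of the breakdown case $\beta_{k_b+1}=0$ (where $P$ has a zero last row) fills in a point the paper leaves implicit.
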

\begin{proof}
  For the determinant term in $\mathcal{L}^{(k)}(\lambda)$, we write 
  \begin{align*}
    \Gamma+\lambda^{-1}M_k
    &= \Gamma^{1/2}\Bigl(I+\lambda^{-1}\Gamma^{-1/2}U_{k+1}B_kB_k^\top U_{k+1}^\top\Gamma^{-1/2}\Bigr)\Gamma^{1/2} \\
    &= \Gamma^{1/2}\Bigl(I+\lambda^{-1}WB_kB_k^\top W^{\top}\Bigr)\Gamma^{1/2},
  \end{align*}
  where $W:=\Gamma^{-1/2}U_{k+1}$ is column 2-orthonormal. Therefore, it holds that 
  \begin{align*}
    \det(\Gamma+\lambda^{-1}M_k)
    = \det(\Gamma)\det\left(I+\lambda^{-1}W B_k B_k^\top W^\top\right) 
    = \det(\Gamma)\det\left(I+\lambda^{-1}B_k^\top B_k \right),
  \end{align*}
  where we have used the determinant identity $\det(I+AB)=\det(I+BA)$. Since the eigenvalues of $B_{k}^{\top}B_{k}$ are $s_1^2,\dots,s_k^2$, we obtain 
  \begin{equation*}
    \det(\Gamma+\lambda^{-1}M_k) = \det(\Gamma)\prod_{i=1}^k\left(1+\frac{s_i^2}{\lambda}\right).
  \end{equation*}
  For the quadratic term in $\mathcal{L}^{(k)}(\lambda)$, using $\Gamma^{-1/2}y=\Gamma^{-1/2}\beta_1 U_{k+1} e_1=\beta_1 W e_1$, we have 
  \begin{align*}
    y^\top(\Gamma+\lambda^{-1}M_k)^{-1}y
    &= y^\top \Gamma^{-1/2}\left(I+\lambda^{-1}W B_k B_k^\top W^\top\right)^{-1}\Gamma^{-1/2}y \\
    &= \beta_1^2e_1^\top W^\top\left(I+\lambda^{-1}W B_k B_k^\top W^\top\right)^{-1}We_1 \\
    &= \beta_1^2e_1^\top \left[ I - B_{k}(B_{k}^{\top}B_{k}+\lambda I)^{-1}B_{k}^{\top} \right]e_{1} \\
    &= \beta_1^2\left[1-e_1^\top P\operatorname{diag}\left(\frac{s_1^2}{s_1^2+\lambda},\dots,\frac{s_k^2}{s_k^2+\lambda}\right)P^\top e_1\right]
  \end{align*}
  where we have used the identity $(I+\lambda^{-1}BB^\top)^{-1}=I-B(B^\top B+\lambda I)^{-1}B^\top$ and the SVD of $B_{k}$. Combining the above two terms leads to the desired result.
\end{proof}

At the $k$th iteration, we define $\lambda_k = \argmin_{\lambda>0}\mathcal{L}^{(k)}(\lambda)$. By \Cref{prop:Lk}, the term $\log\det(\Gamma)$ does not affect the minimization of $\mathcal{L}^{(k)}(\lambda)$ and can therefore be omitted. Hence, the dominant computational cost for determining $\lambda_k$ is the SVD of $B_k$, which requires only $O(k^3)$ flops.

By \Cref{thm:tikhonov-krylov}, we approximate $z_{\lambda}$ in $\mathcal{S}_k$ by solving \cref{proj_regu} with $\lambda=\lambda_k$. Since any $[z]\in\mathcal{S}_k$ can be written as $[z]=[V_k\xi]$ for some $\xi\in\mathbb{R}^k$ and $Mz=MV_{k}\xi$, using \cref{eq:bidiag-relations} and the $\Gamma^{-1}$-orthonormality of $U_{k+1}$, we obtain
\[ \|Mz-y\|_{\Gamma^{-1}} = \|U_{k+1}(B_k\xi - \beta_1 e_1)\|_{\Gamma^{-1}}
= \|B_k\xi - \beta_1 e_1\|_2 . \] 
Moreover, using the $M$-orthonormality of $V_k$, we have
\[ \|[z]\|_{M}^2 = \|[V_{k}\xi]\|_{M}^2 = \xi^{\top}V_{k}^{\top}MV_{k}\xi = \|\xi\|_{2}^2 . \]
Therefore, \cref{proj_regu} is equivalent to
\begin{equation}\label{proj_ls}
  \min_{\xi\in\mathbb{R}^{k}}\{\|B_k\xi-\beta_1e_1\|_2^2+\lambda_k\|\xi\|_2^2\},
\end{equation}
which has a unique minimizer $\xi_{\lambda_k}^{(k)}\in\mathbb{R}^{k}$. Consequently, the $k$th approximation to $z_{\lambda}$ is $[z_{\lambda_k}^{(k)}] = [V_{k}\xi_{\lambda_k}^{(k)}]$, and corresponding iteratively regularized solution is 
\begin{equation}\label{hyb_sol}
  x_{\lambda_k}^{(k)} = \Sigma G^{\top}V_{k}\xi_{\lambda_k}^{(k)}, \quad
  \xi_{\lambda_k}^{(k)} = (B_{k}^{\top}B_{k}+\lambda_k I)^{-1}B_{k}^{\top}\beta_1e_1.
\end{equation}
The following result provides an efficient computation of $\widehat{x}_{\lambda}^{(k)}$ and $\widehat{C}_{\lambda}^{(k)}$ defined in \cref{approx_post_distr}, and shows that the approximate posterior mean in the data-informed Krylov subspace coincides with the iteratively regularized solution. In what follows, we write $\lambda$ in place of $\lambda_k$ at the $k$th step to emphasize that the results hold for any value of $\lambda$.

\begin{proposition}\label{prop:approx_post}
At the $k$th step, the approximate posterior mean coincides with the iteratively regularized solution, namely,
\begin{equation}\label{eq:xhatk-form}
  \widehat{x}_{\lambda}^{(k)} = x_{\lambda}^{(k)}
  = \Sigma G^\top V_k(B_k^\top B_k+\lambda I)^{-1}B_k^\top \beta_1 e_1,
\end{equation}
and the approximate covariance admits the representation
\begin{equation}\label{eq:Chatk-form}
  \widehat{C}_{\lambda}^{(k)}
  = \lambda^{-1}\Sigma-\lambda^{-1}\Sigma G^\top V_k(\lambda I+B_k^\top B_k)^{-1}B_k^\top B_k
  V_k^\top G\Sigma .
\end{equation}
\end{proposition}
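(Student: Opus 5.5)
The plan is to derive both formulas from a Woodbury-type computation on $\widehat{C}_{\lambda}^{(k)} = (\widehat{H}_k+\lambda\Sigma^{-1})^{-1}$, in the spirit of \Cref{prop:post1}. We factor the low-rank term as $\widehat{H}_k = G^\top V_k B_k^\top B_k V_k^\top G = L D L^\top$, with $L := G^\top V_k\in\mathbb{R}^{n\times k}$ and $D := B_k^\top B_k$. For $k\le k_b$, $D$ is SPD because $B_k$ is lower bidiagonal with positive diagonal entries $\alpha_i$. Woodbury with $A=\lambda\Sigma^{-1}$ then gives
\[
\widehat{C}_{\lambda}^{(k)} = \lambda^{-1}\Sigma - \lambda^{-1}\Sigma L\,(D^{-1} + \lambda^{-1}L^\top\Sigma L)^{-1}\lambda^{-1}L^\top\Sigma .
\]

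The key simplification is $L^\top\Sigma L = V_k^\top G\Sigma G^\top V_k = V_k^\top M V_k = I$, which is the $M$-orthonormality of $V_k$ and is the same identity underlying the isometry of \Cref{thm:isom}(a). The inner matrix then reduces to $\lambda^{-1}(\lambda D^{-1}+I)^{-1}$, and
\[
(\lambda D^{-1}+I)^{-1} = D(\lambda I + D)^{-1} = (\lambda I+D)^{-1}D,
\]
since $D$ commutes with $(\lambda I+D)^{-1}$. Substituting this yields \cref{eq:Chatk-form}. To avoid inverting $D$, I could instead use the push-through identity $(\lambda\Sigma^{-1}+LDL^\top)^{-1} = \lambda^{-1}\Sigma - \lambda^{-1}\Sigma L(\lambda I + DL^\top\Sigma L)^{-1}DL^\top\Sigma$, which can be checked by direct multiplication and also covers a singular $D$.

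For the mean, we multiply \cref{eq:Chatk-form} by $G^\top\Gamma^{-1}y$. The main step is to express $V_k^\top G\Sigma G^\top\Gamma^{-1}y = V_k^\top M\Gamma^{-1}y$ through Q-GKB quantities. By \cref{eq:bidiag-relations}, $V_k^\top M = (MV_k)^\top = B_k^\top U_{k+1}^\top$, and $\Gamma^{-1}y = \beta_1\Gamma^{-1}U_{k+1}e_1$. Combined with $U_{k+1}^\top\Gamma^{-1}U_{k+1}=I$, this gives
\[
V_k^\top M\Gamma^{-1}y = \beta_1 B_k^\top e_1 .
\]
The first term $\lambda^{-1}\Sigma G^\top\Gamma^{-1}y$ is not visibly in $\mathcal{R}(\Sigma G^\top V_k)$, so I would also express it through the bidiagonalization. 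From the third relation in \cref{eq:bidiag-relations}, $\Gamma^{-1}y = \beta_1\Gamma^{-1}U_{k+1}e_1 = \beta_1(V_kB_k^\top e_1 + \alpha_{k+1}v_{k+1}e_{k+1}^\top e_1)$. Since $e_{k+1}^\top e_1 = 0$ for $k\ge1$, we get $\Gamma^{-1}y = \beta_1 V_k B_k^\top e_1 = \alpha_1\beta_1 v_1$, which is consistent with the first Q-GKB step. Hence $\Sigma G^\top\Gamma^{-1}y = \Sigma G^\top V_k\,\beta_1B_k^\top e_1$.

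Collecting terms, we obtain
\[
\widehat{x}_{\lambda}^{(k)} = \lambda^{-1}\Sigma G^\top V_k\bigl[I - (\lambda I+D)^{-1}D\bigr]\beta_1B_k^\top e_1 = \Sigma G^\top V_k(\lambda I+D)^{-1}\beta_1 B_k^\top e_1,
\]
using $I-(\lambda I+D)^{-1}D = \lambda(\lambda I+D)^{-1}$. This matches \cref{hyb_sol}, which proves \cref{eq:xhatk-form}.

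I expect the main obstacle to be the bookkeeping in this last step. Recognizing that $\Gamma^{-1}y$ lies exactly in $\mathrm{span}\{v_1\}$, so that the prior term also lands in $\iota(\mathcal S_k)$, is what makes the two contributions combine. The covariance part is otherwise a routine Woodbury computation, with the single nontrivial ingredient $V_k^\top M V_k=I$.
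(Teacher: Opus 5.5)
Your proposal is correct and follows essentially the same route as the paper. You apply Woodbury to $(\widehat H_k+\lambda\Sigma^{-1})^{-1}$ with $V_k^\top M V_k=I$, then for the mean use $V_k^\top M\Gamma^{-1}y=\beta_1 B_k^\top e_1$ and $\Gamma^{-1}y=\beta_1 V_k B_k^\top e_1$, combined via $I-(\lambda I+T_k)^{-1}T_k=\lambda(\lambda I+T_k)^{-1}$. Your check that $B_k^\top B_k$ is SPD for $k\le k_b$, and the push-through variant, supply a point the paper uses implicitly when it writes $(B_k^\top B_k)^{-1}$.
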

\begin{proof}
  Applying the Woodbury identity to the expression of $\widehat{C}_{\lambda}^{(k)}$, we have 
  \[\widehat{C}_{\lambda}^{(k)} = \lambda^{-1}\Sigma-\lambda^{-2}\Sigma G^\top V_k\Bigl((B_k^\top B_k)^{-1}
  + \lambda^{-1}V_k^\top G\Sigma G^\top V_k\Bigr)^{-1}V_k^\top G\Sigma .\]
  Using $V_k^\top G\Sigma G^\top V_k = V_k^\top M V_k =I$ and $\Bigl(T_{k}^{-1} +\lambda^{-1}I\Bigr)^{-1} = \lambda(\lambda I+T_{k})^{-1}T_{k}$ with $T_{k}:=B_k^\top B_k$, we immediately obtain \cref{eq:Chatk-form}.

  Now we show $\widehat{x}_{\lambda}^{(k)}=x_{\lambda}^{(k)}$. Using \cref{eq:Chatk-form}, we have
  \begin{align*}
    \widehat{x}_{\lambda}^{(k)}
    = \lambda^{-1}\Sigma G^\top\Gamma^{-1}y-\lambda^{-1}\Sigma G^\top V_k(\lambda I+T_k)^{-1}T_kV_k^\top M\Gamma^{-1}y.
  \end{align*}
  Since $MV_k=U_{k+1}B_k$, we have $V_k^\top M = B_k^\top U_{k+1}^\top$. From $\beta_1U_{k+1}e_1=y$, we have $U_{k+1}^\top\Gamma^{-1}y=\beta_1e_1$. Therefore, $V_k^\top M\Gamma^{-1}y = B_k^\top U_{k+1}^\top\Gamma^{-1}y = B_k^\top\beta_1e_1$. On the other hand, from \cref{eq:bidiag-relations}, we have $\Gamma^{-1}y =\beta_1\Gamma^{-1}U_{k+1}e_1=V_kB_k^\top\beta_1e_1$. It follows that $\lambda^{-1}\Sigma G^\top\Gamma^{-1}y=\lambda^{-1}\Sigma G^\top V_kB_k^\top\beta_1e_1$. Substituting these into the expression for $\widehat{x}_{\lambda}^{(k)}$ gives
  \begin{align*}
    \widehat{x}_{\lambda}^{(k)}
    &= \lambda^{-1}\Sigma G^\top V_kB_k^\top\beta_1e_1-\lambda^{-1}\Sigma G^\top V_k(\lambda I+T_k)^{-1}T_kB_k^\top\beta_1e_1 \\
    &= \Sigma G^\top V_k \left[\lambda^{-1}I-\lambda^{-1}(\lambda I+T_k)^{-1}T_{k}\right]B_k^\top\beta_1e_1 \\
    &= \Sigma G^\top V_k(\lambda I+T_k)^{-1}B_k^\top\beta_1e_1,
  \end{align*}
  where we used $I-(\lambda I + T_k)^{-1}T_{k}=\lambda(\lambda I + T_{k})^{-1}$. This completes the proof.
\end{proof}

By this result, the posterior mean can be approximated iteratively as the Q-GKB iteration proceeds, while the approximate posterior covariance can be computed efficiently using only the inversion of small-scale matrices. The resulting algorithm is summarized in \Cref{alg2}. As a stopping criterion, we can choose a tolerance $\mathtt{tol}>0$ and terminate the iteration when the relative change in $\lambda$ satisfies $|\lambda_k-\lambda_{k+1}|/\lambda_k\leq \mathtt{tol}$.

\begin{algorithm}[htbp]
\caption{Q-GKB based posterior approximation}
\label{alg2}
\algorithmicrequire \ Matrix $G\in\mathbb{R}^{m\times n}$, SPD matrices $\Sigma\in\mathbb{R}^{n\times n}$,  $\Gamma\in\mathbb{R}^{m\times m}$, vector $y\in\mathbb{R}^{m}$
\begin{algorithmic}[1]
\State Compute $\beta_1$, $\alpha_1$, $u_1$, $v_1$ by Q-GKB
\For{$k=1,2,\dots,K$}
\State Compute $\beta_{k+1}$, $\alpha_{k+1}$, $u_{k+1}$, $v_{k+1}$ by Q-GKB; form $B_{k}$ and $V_{k}$
\State (Terminate the iteration if $\beta_{k+1}$ or $\alpha_{k+1}$ is extremely small)
\State Compute the SVD of $B_k$; form $\mathcal{L}^{(k)}(\lambda)$ as \cref{L_k} \Comment{Discard $\log\det(\Gamma)$}
\State Compute $\lambda_k = \argmin_{\lambda>0}\mathcal{L}^{(k)}(\lambda)$
\State Compute $\widehat{x}_{\lambda_k}^{(k)}$ and $\widehat{C}_{\lambda_k}^{(k)}$ by \cref{eq:xhatk-form} and \cref{eq:Chatk-form}
\EndFor
\end{algorithmic}
\algorithmicensure \ $\widehat{x}_{\lambda_K}^{(K)}$, $\widehat{C}_{\lambda_K}^{(K)}$
\end{algorithm}

We remark that, for large-scale problems, the prior covariance matrix $\Sigma$ need not be formed explicitly in the algorithm. Instead, matrix-vector products of the form $\Sigma v$ can be computed efficiently. In particular, when the prior is given by a stationary or translation invariant kernels, the product $\Sigma v$ can be evaluated efficiently using the fast Fourier transform (FFT) \cite{nowak2003efficient,Williams2006gaussian}. We will illustrate such a case through a numerical example.

\subsection{Accuracy of the approximate posterior}\label{sec4.2}
We quantify the accuracy of the approximate posterior distribution constructed via the Q-GKB iteration. The exact posterior and its $k$th approximation are given by
\begin{equation}\label{eq:approx-post}
  \pi = \mathcal{N}(x_\lambda, C_\lambda), \qquad
  \widehat{\pi}_k = \mathcal{N}(\widehat{x}_\lambda^{(k)}, \widehat{C}_\lambda^{(k)}).
\end{equation}
To measure the difference between the exact covariance and its approximation, we employ the F{\"o}rstner distance. Denote by $\mathrm{Tr}(\cdot)$ the trace of a matrix. For two $d$-by-$d$ SPD matrices $A$ and $B$, the F{\"o}rstner distance is defined by
\begin{equation*}
  d_{F}^2(A,B) = \mathrm{Tr}\left[\log^2(A^{-1/2}BA^{-1/2})\right]
  = \sum_{i=1}^{d}\log^2(\sigma_i) ,
\end{equation*}
where $\{\sigma_i\}_{i=1}^{d}$ are the positive generalized eigenvalues of $(A,B)$~\cite{forstner2003metric}. It has been shown in~\cite{forstner2003metric,spantini2015optimal} that the F{\"o}rstner distance is particularly well suited for comparing covariance matrices. To measure the difference between two probability distributions, we use the Kullback--Leibler (KL) divergence. 
Let $\pi_1 = \mathcal N(m_1,\Sigma_1)$ and $\pi_2 = \mathcal N(m_2,\Sigma_2)$ be two Gaussian distributions on $\mathbb R^d$ with positive definite covariances. The KL divergence between them is defined as
\begin{align*}
  D_{\rm KL}(\pi_1\|\pi_2) 
  = \frac12 \Big[\mathrm{Tr}(\Sigma_2^{-1}\Sigma_1)-d-\log\det(\Sigma_2^{-1}\Sigma_1)
  +(m_2-m_1)^\top \Sigma_2^{-1}(m_2-m_1)\Big].
\end{align*}
Before establishing error bounds for the posterior approximation, we introduce two auxiliary lemmas that will be used in the subsequent analysis. We use $\|\cdot\|_{F}$ to denote the Frobenius norm of a matrix. The proofs are postponed to \Cref{apdx:A}.

\begin{lemma}\label{lem:lanczos_decay}
  Let $A=\Sigma^{1/2}H\Sigma^{1/2}$, $\widehat{A}_k=\Sigma^{1/2}\widehat{H}_{k}\Sigma^{1/2}$ and $\widehat{A}_{0}=0$, and define 
  \begin{equation}
    \zeta_{k} = \mathrm{Tr}(A-\widehat{A}_k), \quad
    \gamma_{k} = \|A-\widehat{A}_k\|_{F} .
  \end{equation}
  Then it hold that $\zeta_k, \gamma_k\geq 0$, and
  \begin{equation}\label{zeta_gamma}
    \begin{cases}
      \zeta_{k+1} = \zeta_{k} - (\alpha_{k+1}^2+\beta_{k+2}^2), \quad \zeta_0=\mathrm{Tr}(H\Sigma), \\
      \gamma_{k+1}^2 = \gamma_{k}^2-2\alpha_{k+1}^2\beta_{k+1}^2-(\alpha_{k+1}^2+\beta_{k+2}^2)^2 , \quad \gamma_{0}^2=\mathrm{Tr}(H\Sigma H \Sigma).
    \end{cases}
  \end{equation}
\end{lemma}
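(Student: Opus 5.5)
Our plan is to reduce everything to small matrices built from the Q-GKB quantities, by writing both $A$ and $\widehat A_k$ in terms of an orthonormal basis adapted to the Krylov subspace. Set $Z=\Sigma^{1/2}G^\top$, so that $A=Z\Gamma^{-1}Z^\top$ and $\widehat A_k = ZV_kB_k^\top B_kV_k^\top Z^\top$. The matrix $Y_k:=ZV_k=\Sigma^{1/2}G^\top V_k$ has 2-orthonormal columns, since $Y_k^\top Y_k=V_k^\top MV_k=I$ (this is \Cref{thm:isom}(a) transported by $\Sigma^{-1/2}$). Write $\Pi=Y_kY_k^\top$ for the orthogonal projector onto $\mathcal R(Y_k)$. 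The key preliminary identity is
\[
Y_k^\top A Y_k = V_k^\top M\Gamma^{-1}MV_k = B_k^\top U_{k+1}^\top\Gamma^{-1}U_{k+1}B_k = B_k^\top B_k ,
\]
which uses \cref{eq:bidiag-relations} and the $\Gamma^{-1}$-orthonormality of $U_{k+1}$ (with the convention $u_{k_b+1}=0$ handled separately). From it we get $\widehat A_k = \Pi A\Pi$. Hence $A-\widehat A_k = A - \Pi A\Pi$.

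For nonnegativity, we use $\mathrm{Tr}(\Pi A\Pi)=\mathrm{Tr}(\Pi A)\le \mathrm{Tr}(A)$, because $A\succeq0$ and $\Pi\preceq I$; this gives $\zeta_k\ge0$. For the trace recursion, we note $\mathrm{Tr}(\widehat A_k)=\mathrm{Tr}(B_k^\top B_k)=\sum_{i\le k}(\alpha_i^2+\beta_{i+1}^2)$. Taking the difference between step $k$ and step $k+1$ yields $\zeta_{k+1}=\zeta_k-(\alpha_{k+1}^2+\beta_{k+2}^2)$. The starting value is $\zeta_0=\mathrm{Tr}(A)=\mathrm{Tr}(H\Sigma)$.

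For the Frobenius recursion, we expand
\[
\|A-\Pi A\Pi\|_F^2 = \|A\|_F^2 - 2\,\mathrm{Tr}(A\Pi A\Pi)+\mathrm{Tr}(\Pi A\Pi A\Pi) = \|A\|_F^2 - \|Y_k^\top AY_k\|_F^2 ,
\]
using $\Pi^2=\Pi$; this is at least $0$ trivially, as a squared norm. So $\gamma_k^2=\gamma_0^2-\|B_k^\top B_k\|_F^2$, with $\gamma_0^2=\mathrm{Tr}(A^2)=\mathrm{Tr}(H\Sigma H\Sigma)$. It remains to compute $\|T_{k+1}\|_F^2-\|T_k\|_F^2$, where $T_k=B_k^\top B_k$ is tridiagonal. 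Its diagonal entries are $\alpha_i^2+\beta_{i+1}^2$ and its off-diagonal entries are $\alpha_{i+1}\beta_{i+1}$. Passing from $T_k$ to $T_{k+1}$ leaves the entries of the leading $k\times k$ block unchanged, appends two off-diagonal entries $\alpha_{k+1}\beta_{k+1}$, and appends one new diagonal entry $\alpha_{k+1}^2+\beta_{k+2}^2$. The block is indeed unchanged because $B_k$ is the leading $(k+1)\times k$ part of $B_{k+1}$. This gives exactly the stated recursion.

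The main obstacle is the breakdown case $k+1=k_b$ or beyond, where $\beta_{k+2}$ or $\alpha_{k+1}$ may vanish and $u_{k+2}=0$, so $U$ is no longer fully orthonormal. We plan to verify directly that $Y_k^\top A Y_k=B_k^\top B_k$ still holds there, since a zero column contributes a zero entry times a zero coefficient. With the convention that vanishing coefficients are set to zero, the recursions then remain consistent.
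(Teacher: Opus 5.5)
Your proof is correct except at the first step of the $\gamma$-recursion, discussed below, and it takes a different route from the paper's. Both arguments start from $\widehat A_k=P_kAP_k$ with $P_k=Q_kQ_k^\top$ and $Q_k=\Sigma^{1/2}G^\top V_k$. From there the paper proceeds locally:
\begin{itemize}
\item it gets $\zeta_k\ge0$ from Cauchy interlacing;
\item it gets the trace recursion from the explicit rank-three update $\widehat A_{k+1}-\widehat A_k$;
\item it gets the Frobenius recursion from the trace-orthogonal splitting $A-\widehat A_k=(I-P_k)A+P_kA(I-P_k)$, together with the three-term recurrence for $Aq_{k+1}$. That recurrence drags in $\alpha_{k+2}$, $\beta_{k+2}$ and $q_{k+2}$ only for them to cancel.
\end{itemize}
Your Pythagorean identity instead gives the closed forms $\zeta_k=\mathrm{Tr}(A)-\|B_k\|_F^2$ and $\gamma_k^2=\|A\|_F^2-\|B_k^\top B_k\|_F^2$. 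Nonnegativity is then immediate. Both recursions reduce to comparing nested tridiagonal matrices, using only Q-GKB quantities up to step $k+1$.

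The exception is your claim that passing from $T_k$ to $T_{k+1}$ ``appends two off-diagonal entries $\alpha_{k+1}\beta_{k+1}$''. This is false for $k=0$: going from the empty $T_0$ to $T_1=(\alpha_1^2+\beta_2^2)$ appends none. Your own closed form therefore gives $\gamma_1^2=\gamma_0^2-(\alpha_1^2+\beta_2^2)^2$. This differs from the stated recursion by $2\alpha_1^2\beta_1^2>0$.

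A concrete check: take $G=\mathrm{diag}(1,2)$, $\Sigma=\Gamma=I$ and $y=(1,1)^\top$. Then $\gamma_0^2=17$, $\alpha_1^2=5/2$, $\beta_1^2=2$ and $\beta_2^2=9/10$. This gives $\gamma_1^2=136/25$, whereas the stated formula gives the impossible value $-114/25$.

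So the statement itself is wrong at this step, and the paper's proof inherits the error. It uses $\|q_k\|_2=1$ both in $\|P_kA(I-P_k)\|_F^2=\alpha_{k+1}^2\beta_{k+1}^2$ and in the expansion of $\|Aq_{k+1}\|_2^2$, but $q_0=\Sigma^{1/2}G^\top v_0=0$.

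Rather than concluding ``exactly the stated recursion'', you should record the first step separately as $\gamma_1^2=\gamma_0^2-(\alpha_1^2+\beta_2^2)^2$. The displayed $\gamma$-recursion should then be asserted only for $1\le k\le k_b-1$. The $\zeta$-recursion is fine for every $k\ge0$. Your handling of the breakdown case, where the zero last row of $B_{k_b}$ meets $u_{k_b+1}=0$, is correct.
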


\begin{lemma}\label{lem:inv_bnd}
  Let $A_1$ and $A_2$ be two SPD matrice of the same order, then
  \begin{equation*}
    \|(I+A_1)^{-1}-(I+A_2)^{-1}\|_2 \leq \frac{\|A_1-A_2\|_2}{1+\|A_1-A_2\|_2}.
  \end{equation*}
\end{lemma}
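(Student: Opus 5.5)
The plan is to compare each resolvent with a shifted copy of the other matrix in the Loewner order, and then reduce the bound to a scalar inequality through the spectral decomposition of a single matrix.

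Set $X=(I+A_1)^{-1}$, $Y=(I+A_2)^{-1}$ and $\delta=\|A_1-A_2\|_2$. Since $X-Y$ is symmetric, $\|X-Y\|_2$ is the largest absolute eigenvalue of $X-Y$. It therefore suffices to prove the two-sided Loewner bound
\[-\tfrac{\delta}{1+\delta}I\preceq X-Y\preceq \tfrac{\delta}{1+\delta}I.\]
(Here $\preceq$ is used in the usual semidefinite sense.)

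First I would record the elementary sandwich $A_1-\delta I\preceq A_2\preceq A_1+\delta I$. It holds because the symmetric matrix $A_2-A_1$ has all its eigenvalues in $[-\delta,\delta]$.

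Next I would use the operator anti-monotonicity of inversion on SPD matrices: $0\prec P\preceq Q$ implies $Q^{-1}\preceq P^{-1}$. Applying this to $I+A_2\preceq (1+\delta)I+A_1$ gives $Y\succeq \bigl((1+\delta)I+A_1\bigr)^{-1}$. Hence
\[X-Y\preceq (I+A_1)^{-1}-\bigl((1+\delta)I+A_1\bigr)^{-1}=g(A_1),\qquad g(a)=\frac{\delta}{(1+a)(1+a+\delta)}.\]
Now I diagonalize $A_1$. Its eigenvalues satisfy $a\ge 0$, so $(1+a)(1+a+\delta)\ge 1+\delta$, and therefore $g(A_1)\preceq \frac{\delta}{1+\delta}I$. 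This gives the upper bound. Exchanging the roles of $A_1$ and $A_2$ in the same argument gives $Y-X\preceq\frac{\delta}{1+\delta}I$, which is the lower bound.

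The main subtlety is avoiding the naive estimate. The identity $X-Y=X(A_2-A_1)Y$ combined with $\|X\|_2,\|Y\|_2\le1$ only yields the bound $\delta$, which is too weak. The remedy is to never compare against $A_1-\delta I$, which may fail to be positive semidefinite. Using only the upper shift $A_1+\delta I$ in each direction keeps every matrix whose inverse is taken SPD. It also keeps the spectral calculus restricted to $a\ge0$, and that restriction is exactly what produces the factor $1/(1+\delta)$.

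As a sanity check on sharpness, the scalar case $a_1=0$, $a_2=\delta$ gives $1-\frac{1}{1+\delta}=\frac{\delta}{1+\delta}$, so the bound is attained there (the lemma itself only asserts $\le$).
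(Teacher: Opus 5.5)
Your proof is correct and is essentially the paper's argument: the Loewner sandwich $A_2\preceq A_1+\delta I$, anti-monotonicity of inversion applied only with the upward shift, reduction of $(I+A_1)^{-1}-(I+A_2)^{-1}$ to the function $\delta(I+A_1)^{-1}(I+A_1+\delta I)^{-1}$ of $A_1$ alone, the bound $\frac{\delta}{1+\delta}I$, and then symmetry. Your scalar evaluation of $g$ on the spectrum of $A_1$ makes explicit the commutativity that the paper uses implicitly when it multiplies the bounds $(I+A_1)^{-1}\preceq I$ and $(I+A_1+\delta I)^{-1}\preceq\frac{1}{1+\delta}I$.
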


We are now ready to quantify the discrepancy between the exact posterior and its approximation. For clarity, we again write $\lambda$ in place of $\lambda_k$ to emphasize that the result hold for any value of $\lambda$.

\begin{theorem}\label{thm:bnd_post}
  Assume that Q-GKB runs for $k$ steps with $k\leq k_b$, then we have
  \begin{equation}
    d_{F}(C_{\lambda},\widehat{C}_{\lambda}^{(k)}) \leq \frac{\gamma_k}{\lambda} ,
  \end{equation}
  and 
  \begin{equation}
    D_{\mathrm{KL}}(\widehat{\pi}_k\Vert\pi) \leq \frac{1}{2\lambda}\left(\zeta_{k}+\frac{\alpha_{1}^2\beta_{1}^2\gamma_{k}^2}{\lambda(\lambda+\gamma_{k})}\right).
  \end{equation}
\end{theorem}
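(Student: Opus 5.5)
The plan is to whiten everything with $\Sigma^{1/2}$ and reduce both bounds to matrix inequalities between $A=\Sigma^{1/2}H\Sigma^{1/2}$ and $\widehat A_k=\Sigma^{1/2}\widehat H_k\Sigma^{1/2}$. We have
\[
C_\lambda=\Sigma^{1/2}(\lambda I+A)^{-1}\Sigma^{1/2},\qquad \widehat C_\lambda^{(k)}=\Sigma^{1/2}(\lambda I+\widehat A_k)^{-1}\Sigma^{1/2}.
\]
Hence the generalized eigenvalues $\sigma_i$ of $(C_\lambda,\widehat C_\lambda^{(k)})$ coincide with those of the pencil $(\lambda I+\widehat A_k,\lambda I+A)$. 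A structural fact will be used repeatedly. Set $W=\Sigma^{1/2}G^\top V_k$. Then $W^\top W=V_k^\top MV_k=I$. Also $B_k^\top B_k=V_k^\top M\Gamma^{-1}MV_k=W^\top AW$, using $MV_k=U_{k+1}B_k$ and $U_{k+1}^\top\Gamma^{-1}U_{k+1}=I$. Hence $\widehat A_k=PAP$ with $P=WW^\top$ an orthogonal projector, so $\widehat A_k$ is a compression of $A$.

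\emph{F\"orstner bound.} I will use the scalar inequality $|\log t|\le |t-1|/\sqrt t$ for $t>0$. Consider
\[
K=(\lambda I+A)^{-1/2}(\widehat A_k-A)(\lambda I+\widehat A_k)^{-1/2}.
\]
A short computation gives $KK^\top=E(I+E)^{-1}E$ with $E=(\lambda I+A)^{-1/2}(\widehat A_k-A)(\lambda I+A)^{-1/2}$. The eigenvalues of $I+E$ are the $\sigma_i$, so the singular values of $K$ are exactly $|\sigma_i-1|/\sqrt{\sigma_i}$. Therefore
\[
d_F^2\le\sum_i(\sigma_i-1)^2/\sigma_i=\|K\|_F^2\le\|(\lambda I+A)^{-1/2}\|_2^2\,\|(\lambda I+\widehat A_k)^{-1/2}\|_2^2\,\|A-\widehat A_k\|_F^2\le\gamma_k^2/\lambda^2,
\]
since both resolvent factors have norm at most $\lambda^{-1/2}$.

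\emph{KL bound.} I take $\Sigma_1=\widehat C_\lambda^{(k)}$ and $\Sigma_2=C_\lambda$ and split the divergence into three parts.
\begin{itemize}
\item[(i)] Trace term. We have $\mathrm{Tr}(C_\lambda^{-1}\widehat C^{(k)}_\lambda)-n=\mathrm{Tr}\big((A-\widehat A_k)(\lambda I+\widehat A_k)^{-1}\big)$. Since $\widehat A_k=PAP$ commutes with $P$, $(\lambda I+\widehat A_k)^{-1}=\lambda^{-1}(I-P)+P(\lambda I+PAP)^{-1}P$. Write $A-PAP=(I-P)A(I-P)+PA(I-P)+(I-P)AP$. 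The cross terms have zero trace against this block-diagonal resolvent. What remains is $\lambda^{-1}\mathrm{Tr}((I-P)A(I-P))=\lambda^{-1}\mathrm{Tr}(A-\widehat A_k)=\zeta_k/\lambda$.
\item[(ii)] Log-determinant term. We have $-\log\det(C_\lambda^{-1}\widehat C_\lambda^{(k)})=\log\det(\lambda I+\widehat A_k)-\log\det(\lambda I+A)\le 0$. This follows from Cauchy interlacing for the compression $PAP$: its nonzero eigenvalues are dominated by those of $A$.
\item[(iii)] Mean term. Let $b=\Sigma^{1/2}G^\top\Gamma^{-1}y$. Then $x_\lambda=\Sigma^{1/2}(\lambda I+A)^{-1}b$ and $\widehat x_\lambda^{(k)}=\Sigma^{1/2}(\lambda I+\widehat A_k)^{-1}b$. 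With $D=(\lambda I+\widehat A_k)^{-1}-(\lambda I+A)^{-1}$, the mean term equals $b^\top D(\lambda I+A)Db$.
\end{itemize}

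For (iii), Q-GKB gives $\Gamma^{-1}y=\alpha_1\beta_1v_1$, hence $\|b\|_2^2=\alpha_1^2\beta_1^2v_1^\top Mv_1=\alpha_1^2\beta_1^2$. I bound
\[
b^\top D(\lambda I+A)Db\le\|D\|_2\,\|(\lambda I+A)D\|_2\,\|b\|_2^2 .
\]
The first factor comes from \Cref{lem:inv_bnd} applied to $\lambda^{-1}\widehat A_k$ and $\lambda^{-1}A$: $\|D\|_2\le\lambda^{-1}\frac{\gamma_k/\lambda}{1+\gamma_k/\lambda}\le \frac{\gamma_k}{\lambda(\lambda+\gamma_k)}$, using $\|\cdot\|_2\le\|\cdot\|_F$ and monotonicity of $t/(1+t)$. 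The second factor uses $(\lambda I+A)D=(A-\widehat A_k)(\lambda I+\widehat A_k)^{-1}$, whose norm is at most $\gamma_k/\lambda$. Multiplying gives $\alpha_1^2\beta_1^2\gamma_k^2/(\lambda^2(\lambda+\gamma_k))$. Adding (i)–(iii) and the factor $\tfrac12$ yields the stated bound.

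The step I expect to be most delicate is the F\"orstner estimate. A naive symmetric-in-$\sigma$ bound on $|\log\sigma_i|$ loses a factor $2$. The point is to find the nonsymmetric matrix $K$ whose singular values are exactly $|\sigma_i-1|/\sqrt{\sigma_i}$, then pair it with $|\log t|\le|t-1|/\sqrt t$. Some care is also needed in (i) and (ii) to justify the block decomposition with respect to $P$ and the interlacing inequality. Both rest only on $\widehat A_k=PAP$, which follows from \Cref{thm:qsggkb-properties}.
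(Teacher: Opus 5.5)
Your proposal is correct and is essentially the paper's proof. The inequality $|\log t|\le |t-1|/\sqrt{t}$ is the paper's $(\log t)^2\le t+t^{-1}-2$, and $\|K\|_F^2$ equals the paper's $\mathrm{Tr}\bigl((A-\widehat A_k)(\lambda I+\widehat A_k)^{-1}(A-\widehat A_k)(\lambda I+A)^{-1}\bigr)$, so your F\"orstner argument is only a symmetrized form of theirs. Likewise, your block decomposition with respect to $P$ is the paper's Woodbury step using $W^\top(A-\widehat A_k)W=0$, and your remaining steps coincide with the paper's: the interlacing argument for the log-determinant, and for the mean term the bound $\|D\|_2\,\|(\lambda I+A)D\|_2\,\|b\|_2^2$ with $\|b\|_2^2=\alpha_1^2\beta_1^2$ and \Cref{lem:inv_bnd}.
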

\begin{proof}
  We first give the upper bound for $d_{F}(C_{\lambda},\widehat{C}_{\lambda}^{(k)})$.
  Rewriting $C_{\lambda}$ and $\widehat{C}_{\lambda}^{(k)}$ as
  \begin{equation*}
    C_{\lambda}  = \Sigma^{1/2}(A+\lambda I)^{-1}\Sigma^{1/2}, \quad
    \widehat{C}_{\lambda}^{(k)} = \Sigma^{1/2}(\widehat{A}_k+\lambda I)^{-1}\Sigma^{1/2},
  \end{equation*}
  where $A=\Sigma^{1/2}H\Sigma^{1/2}$ and $\widehat{A}_k=\Sigma^{1/2}\widehat{H}_{k}\Sigma^{1/2}$, and using the property
  \begin{equation*}
    d_{F}(A,B) = d_{F}(A^{-1},B^{-1})
    = d_{F}(NAN^{\top},NBN^{\top})
  \end{equation*}
  for any nonsingular matrix $N$~\cite{forstner2003metric}, we get $d_{F}(C_{\lambda},\widehat{C}_{\lambda}^{(k)})=d_{F}(\lambda I+A,\lambda I+\widehat{A}_k)$. By definition of the F\"orstner distance, we have 
  \begin{equation*}
    d_{F}^2(\lambda I+A, \lambda I+\widehat A_k) = \sum_{i=1}^{m} \log^2 \left(\sigma_i(D_k)\right),
  \end{equation*}
  where $D_k=(\lambda I+A)(\lambda I+\widehat A_k)^{-1}=(I+\lambda^{-1}A)(I+\lambda^{-1}\widehat{A}_{k})^{-1}$, and $\{\sigma_{i}(D_k)\}_i$ are the eigenvalues of $D_k$ that equal to the generalized eigenvalue of $(\lambda I+A, \,\lambda I+\widehat A_k)$. Using the inequality $(\log t)^2\le t+t^{-1}-2$ for $t>0$, we obtain 
  \[d_{F}^2(\lambda I+A,\lambda I+\widehat A_k) \le \mathrm{Tr}(D_k)+\mathrm{Tr}(D_k^{-1})-2m. \]
  Now let $E_k=A-\widehat A_k$. Then it holds that 
  \begin{equation*}
    D_k-I = E_k(\lambda I+\widehat A_k)^{-1}, \quad
    D_k^{-1}-I = -E_k(\lambda I+A)^{-1}.
  \end{equation*}
  It follows that
  \begin{align*}
    \mathrm{Tr}(D_k)+\mathrm{Tr}(D_k^{-1})-2n
    &= \mathrm{Tr}\!\big(E_k(\lambda I+\widehat A_k)^{-1}\big) - \mathrm{Tr}\!\big(E_k(\lambda I+A)^{-1}\big) \\
    &= \mathrm{Tr}\!\Big(E_k(\lambda I+\widehat A_k)^{-1}E_k(\lambda I+A)^{-1}\Big),
  \end{align*} 
  where we have used the identity 
  \[(\lambda I+\widehat A_k)^{-1}-(\lambda I+A)^{-1} 
  = (\lambda I+\widehat A_k)^{-1}E_k(\lambda I+A)^{-1}.\] 
  Since $\|(\lambda I+A)^{-1}\|_2\le \lambda^{-1}$ and $\|(\lambda I+\widehat A_k)^{-1}\|_2\le \lambda^{-1}$, using \Cref{lem:lanczos_decay} we obtain 
  \begin{align*}
    \mathrm{Tr}\!\Big(E_k(\lambda I+\widehat A_k)^{-1}E_k(\lambda I+A)^{-1}\Big)
    \le \|E_k(\lambda I+\widehat A_k)^{-1}\|_F\|E_k(\lambda I+A)^{-1}\|_F 
    \le \frac{\|E_k\|_F^2}{\lambda^2} = \frac{\gamma_k^2}{\lambda^2}.
  \end{align*}
  This is the desired bound for $d_{F}(C_{\lambda},\widehat{C}_{\lambda}^{(k)})$.

  The KL divergence between $\widehat{\pi}_{k}$ and $\pi$ is 
  \begin{align*}
    D_{\mathrm{KL}}(\widehat\pi_k\|\pi)
    = \frac{1}{2}\Big[\underbrace{\mathrm{Tr}\!\big(C_\lambda^{-1}\widehat C_\lambda^{(k)}\big)-n-\log\det\!\big(C_\lambda^{-1}\widehat C_\lambda^{(k)}\big)}_{\text{I}} + \underbrace{\|x_\lambda-\widehat x_\lambda^{(k)}\|_{C_\lambda^{-1}}^2}_{\text{II}}\Big].
  \end{align*}
  In what follows, we bound the terms I and II separately.
  Noticing that $C_\lambda^{-1}\widehat C_\lambda^{(k)}=\Sigma^{-1/2}(A+\lambda I)(\widehat A_k+\lambda I)^{-1}\Sigma^{1/2}$, we have 
  \begin{align*}
    \text{I} 
    &= \mathrm{Tr}((A+\lambda I)(\widehat{A}_k+\lambda I)^{-1}) - n - \log\det((A+\lambda I)(\widehat{A}_k+\lambda I)^{-1}) \\
    &= \mathrm{Tr}\big(E_k(\widehat A_k+\lambda I)^{-1}\big) -\Big(\log\det(A+\lambda I)-\log\det(\widehat A_k+\lambda I)\Big) ,
  \end{align*}
  where $E_{k}=A-\widehat A_k$. From the proof of \Cref{lem:lanczos_decay}, we have $\widehat A_k=Q_kT_kQ_k^\top$ and $T_k:=B_{k}^{\top}B_{k}=Q_k^\top A Q_k$, where $Q_k\in\mathbb{R}^{m\ k}$ have orthonormal columns. Combining these relations with the Woodbury identity, we get 
  \[(\widehat A_k+\lambda I)^{-1} = \lambda^{-1}I-\lambda^{-1}Q_kT_k(T_k+\lambda I)^{-1}Q_k^\top,\]
  which leads to 
  \begin{align*}
    \mathrm{Tr}\big(E_k(\widehat A_k+\lambda I)^{-1}\big)
    &= \lambda^{-1}\mathrm{Tr}(E_k)-\lambda^{-1}\mathrm{Tr}\Big(Q_k^\top E_k Q_k\,T_k(T_k+\lambda I)^{-1}\Big) = \lambda^{-1}\mathrm{Tr}(E_k),
  \end{align*}
  since $Q_k^\top E_k Q_k = Q_k^\top A Q_k-Q_k^\top \widehat A_k Q_k = T_k-T_k=0$. Therefore, we get 
  \begin{equation*}
    \mathrm{Tr}((A+\lambda I)(\widehat{A}_k+\lambda I)^{-1}) - n = \frac{\zeta_k}{\lambda}.
  \end{equation*}
  Next, let \(\theta_1\ge \cdots \ge \theta_n\ge 0\) be the eigenvalues of \(A\), and \(\hat{\theta}_1\ge \cdots \ge \hat{\theta}_k\ge 0\) be the eigenvalues of \(T_k\). Noticing that $T_k=Q_k^\top A Q_k$, the Cauchy interlacing theorem \cite{horn2012matrix} implies that $\theta_i\ge\hat{\theta}_i$ for $i=1,\dots,k$. Since the eigenvalues of \(\widehat A_k=Q_kT_kQ_k^\top\) are \(\{\hat{\theta}_1,\dots,\hat{\theta}_k,0,\dots,0\}\), we have
  \begin{equation*}
    \det(\widehat A_k+\lambda I) = \lambda^{n-k}\prod_{i=1}^k(\lambda+\hat{\theta}_i) 
    \le \prod_{i=1}^n(\lambda+\theta_i) = \det(A+\lambda I).
  \end{equation*}
  Hence $\log\det(A+\lambda I)-\log\det(\widehat A_k+\lambda I)\ge 0$. Therefore, we obtain $\text{I}\le\frac{\zeta_k}{\lambda}$. For the term II, by the formulas for the exact and approximate posterior means, we have
  \begin{align*}
    x_\lambda-\widehat x_\lambda^{(k)}
    = \Sigma^{1/2}\Big[(A+\lambda I)^{-1}-(\widehat A_k+\lambda I)^{-1}\Big]\Sigma^{1/2}G^\top\Gamma^{-1}y .
  \end{align*} 
  Let $F_k=(A+\lambda I)^{-1}-(\widehat A_k+\lambda I)^{-1}$ and $\hat{y}=\Sigma^{1/2}G^\top\Gamma^{-1}y$. Using $C_{\lambda}^{-1}=\Sigma^{-1/2}(A+\lambda I)\Sigma^{-1/2}$, we have 
  \begin{equation*}
    \|x_\lambda-\widehat x_\lambda^{(k)}\|_{C_\lambda^{-1}}^2 
    = \hat{y}^{\top}F_{k}(A+\lambda I)F_{k}\hat{y}
    \leq \|F_{k}\|_2 \, \|(A+\lambda I)F_{k}\|_2 \, \|\hat{y}\|_2^2 .
  \end{equation*}
  Using $\Gamma^{-1}y=V_k B_k^\top \beta_1 e_1=\alpha_1\beta_1v_1$ and $v_1^{\top}Mv_{1}=1$, we get $\|\hat{y}\|_{2}^2=(\Gamma^{-1}y)^{\top}M(\Gamma^{-1}y)=\alpha_1^2\beta_1^2$. Using \Cref{lem:inv_bnd}, we get 
  \begin{equation*}
    \|F_k\|_2 = \lambda^{-1}\|(I+\lambda^{-1}A)^{-1}-(I+\lambda^{-1}\widehat{A}_k)^{-1}\|_2
    \leq \frac{1}{\lambda^2}\frac{\|E_k\|_2}{1+\lambda^{-1}\|E_k\|_2} \leq \frac{\gamma_k}{\lambda(\lambda+\gamma_k)},
  \end{equation*}
  where the last ``$\le$" uses that $\|E\|_2\le\|E\|_{F}=\gamma_k$ and $\frac{t}{1+\lambda^{-1}t}$ is monotonically increasing for $t\geq 0$. Noticing that 
  \begin{equation*}
    (A+\lambda I)F_k=(A+\lambda I)(A+\lambda I)^{-1}\Big[(\widehat A_k+\lambda I)-(A+\lambda I)\Big](\widehat{A}_k+\lambda I)^{-1} = -E_{k}(\widehat{A}_k+\lambda I)^{-1},
  \end{equation*}
  we get $\|(A+\lambda I)F_{k}\|_2 \leq \|E_k\|_2 \, \|(\widehat{A}_k+\lambda I)^{-1}\|_2
    \leq \frac{\gamma_k}{\lambda}$ since $\widehat{A}_k$ is SPD. Therefore, we obtain
    \begin{equation}\label{mean_err_bnd}
      \text{II} = \|x_\lambda-\widehat x_\lambda^{(k)}\|_{C_\lambda^{-1}}^2 
      \leq \frac{\alpha_1^2\beta_1^2\gamma_k^2}{\lambda^2(\lambda+\gamma_k)} .
    \end{equation}
    Combining the upper bounds for I and II, we finally obtain the desired bound.
\end{proof}

This result shows that the accuracy of the approximate posterior is bounded by the computable quantities $\zeta_k$ and $\gamma_k$, which can be updated iteratively via the recurrences \cref{zeta_gamma} at negligible cost. In practice, these bounds can be more reliable than the directly computed true errors in some cases, since the latter may be contaminated by numerical ill-conditioning of the covariance matrices. Hence, the bounds can serve not only as theoretical guarantees, but also as practical and reliable indicators of posterior approximation accuracy.

\section{Numerical experiments} \label{sec5}
We present three representative numerical examples of increasing scale and complexity to illustrate the proposed theory and demonstrate the performance of the method. From a one-dimensional integral equation to large-scale CT reconstruction, we show how the method adapts to different problem settings. In particular, the large-scale example highlights the effectiveness of the matrix-free formulation, where direct access to the posterior distribution is computationally prohibitive.

All numerical experiments are implemented in MATLAB R2025b, and our methods can be executed on a standard personal laptop. The source code is publicly available at \url{https://github.com/HaiboLi99/DS-Bayes}.

\subsection{One-dimensional Fredholm integral equation}
In the first example, we consider the one-dimensional Fredholm integral equation of the first kind:
\begin{equation}
  g(s) = \int_{a}^{b}\phi(s,t)x(t)\mathrm{d}t,  \quad \phi(s,t) = \exp\left(-|s-t|/l \right),
\end{equation}
where we set $a=-\pi/2$, $b=\pi/2$, and set $l=10$ in the integeral kernel $\phi$. For the desretization, we use $n=5000$ uniform grids on $[-\pi/2, \pi/2]$ and use $m=3000$ uniform grids on $[-\pi/2, \pi/2]$ for the observation. This lead to the forward matrix $G\in\mathbb{R}^{3000\times 5000}$. The unknown $x$ is modeled as a Gaussian process on $[-\pi/2,\pi/2]$ with covariance induced by the Gaussian kernel 
\begin{equation}
  K(t_1,t_2) = \sigma^2\exp\left(-\frac{(t_1-t_2)^2}{2l^2}\right),
\end{equation}
where $l$ and $\sigma^2$ denote the correlation length and marginal variance, respectively. In the experiment, we set $l=0.4$ and $\sigma=0.2$, and draw a random sample from this Gaussian process on the uniform grid as the ground truth $x_{\text{true}}$. We generate noisy data $y$ by adding a white Gaussian noise to $Gx_{\mathrm{true}}$ with noise level $\|\eta\|_2 / \|Gx_{\mathrm{true}}\|_2=0.005$. The true solution and noisy observation are shown in Figure \ref{fig4}.  

\begin{figure}[!htbp]
	\centering
	\subfloat 
	{\label{fig:1a}\includegraphics[width=0.45\textwidth]{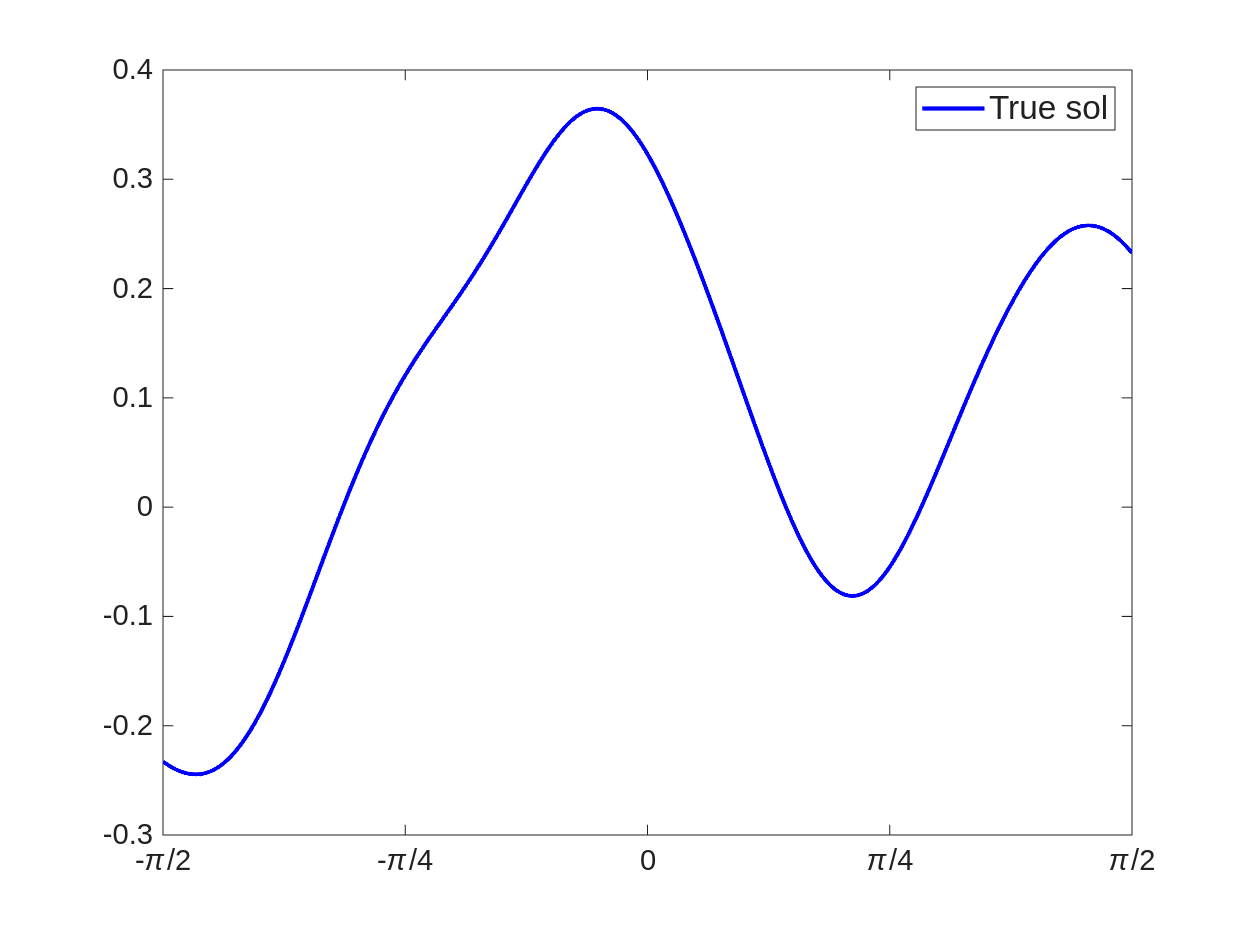}}\hspace{-5mm}
	\subfloat
	{\label{fig:1b}\includegraphics[width=0.45\textwidth]{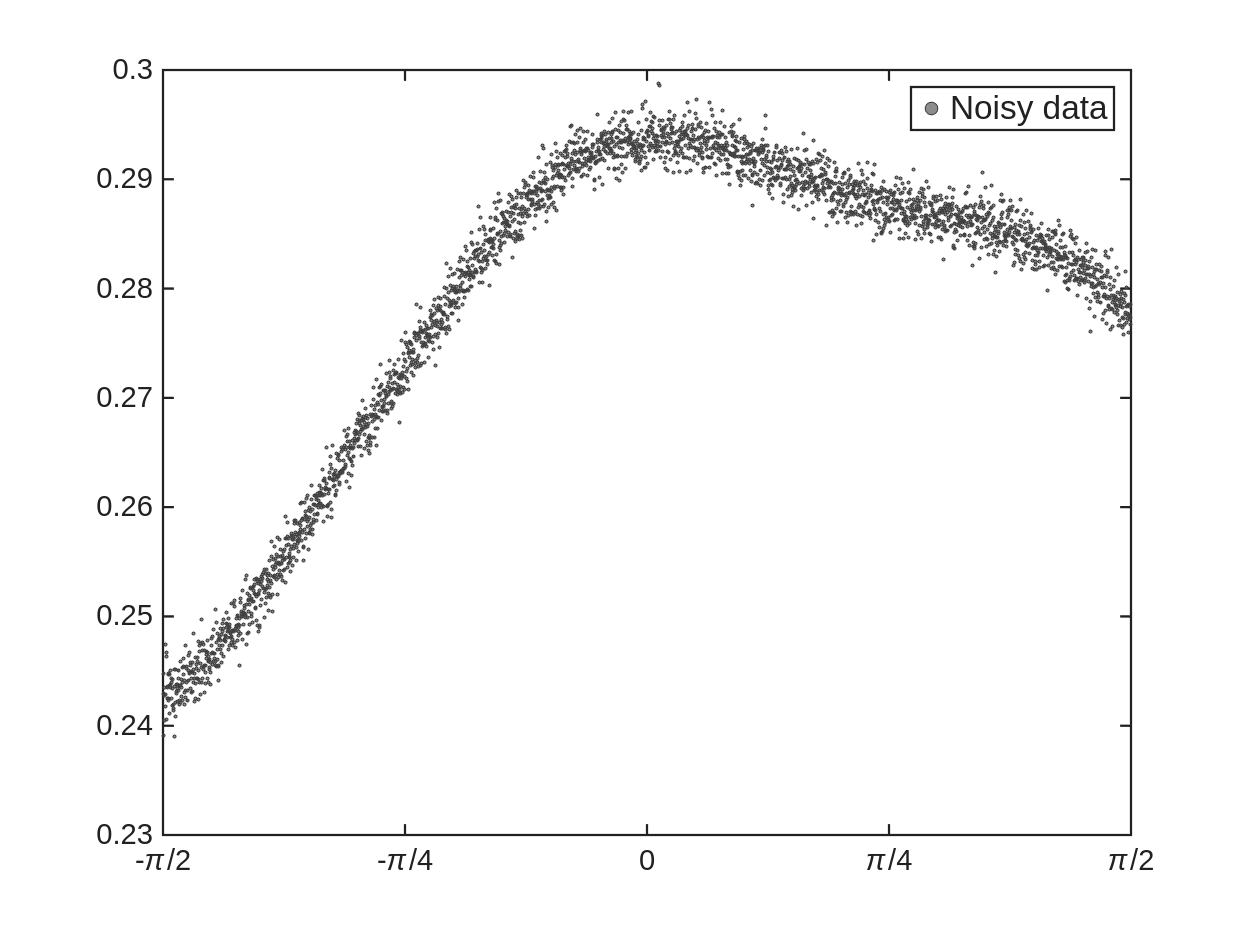}}\vspace{-2mm}
	\caption{Illustration of the true solution and noisy observation data.}
	\label{fig1}
\end{figure}

To reconstruct the unknown $x$, we construct the covariance matrix $\Sigma$ using the Gaussian kernel with $l=0.4$. The exact value of $\sigma$ is supposed to be unknown, hence we set $\sigma=1$ to form $\Sigma$. Given the relationship $\sigma^2 = 1/\lambda$, the Q-GKB based empirical Bayesian inference (QGKB-EB) method iteratively estimates the optimal hyperparameter $\lambda$, thereby obtaining an accurate estimate of $\sigma$, which is then be used to compute the posterior distribution.

\begin{figure}[!t]
	\centering
	\subfloat
	{\label{fig:2a}\includegraphics[width=0.45\textwidth]{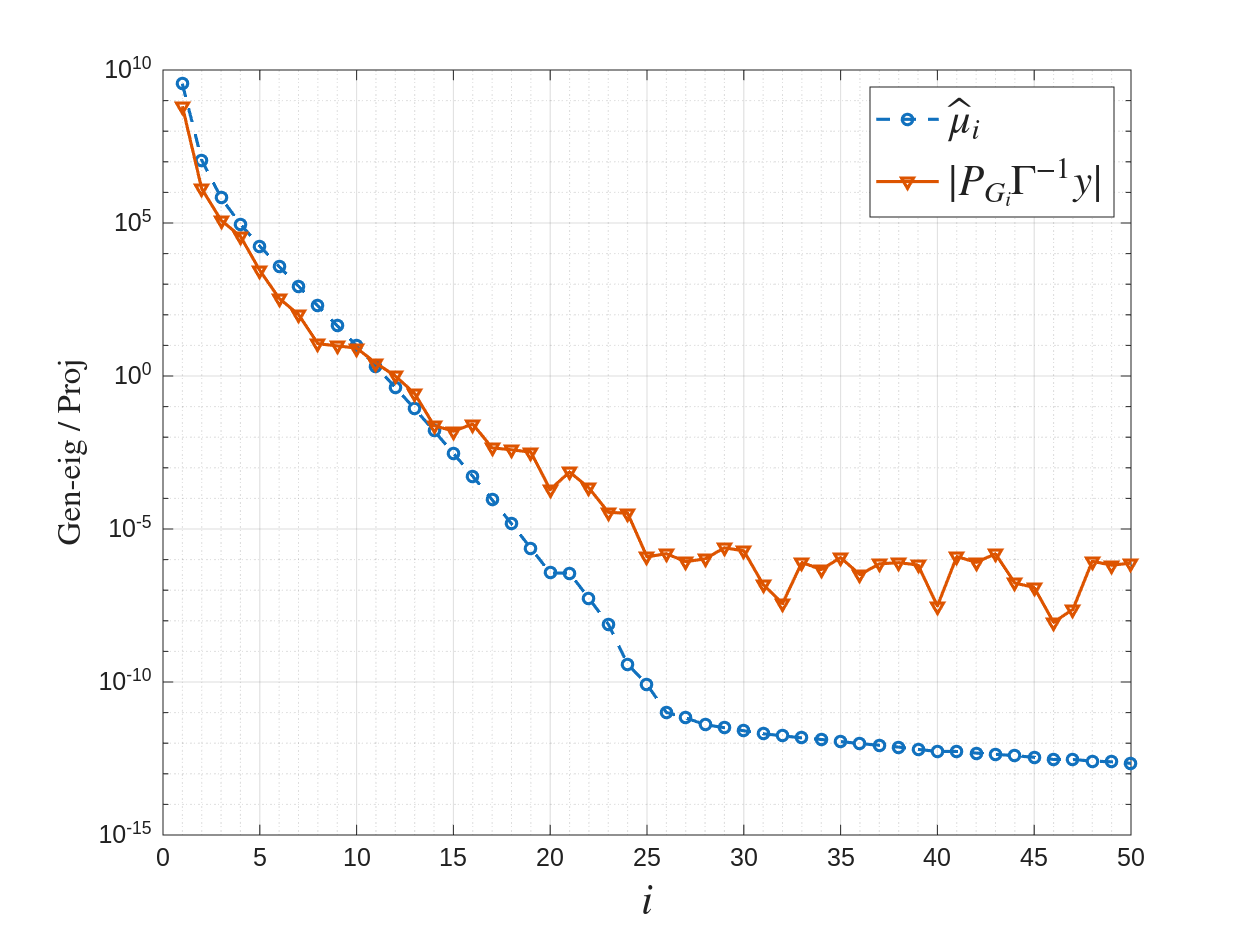}}\hspace{-5mm}
  \subfloat
	{\label{fig:2d}\includegraphics[width=0.45\textwidth]{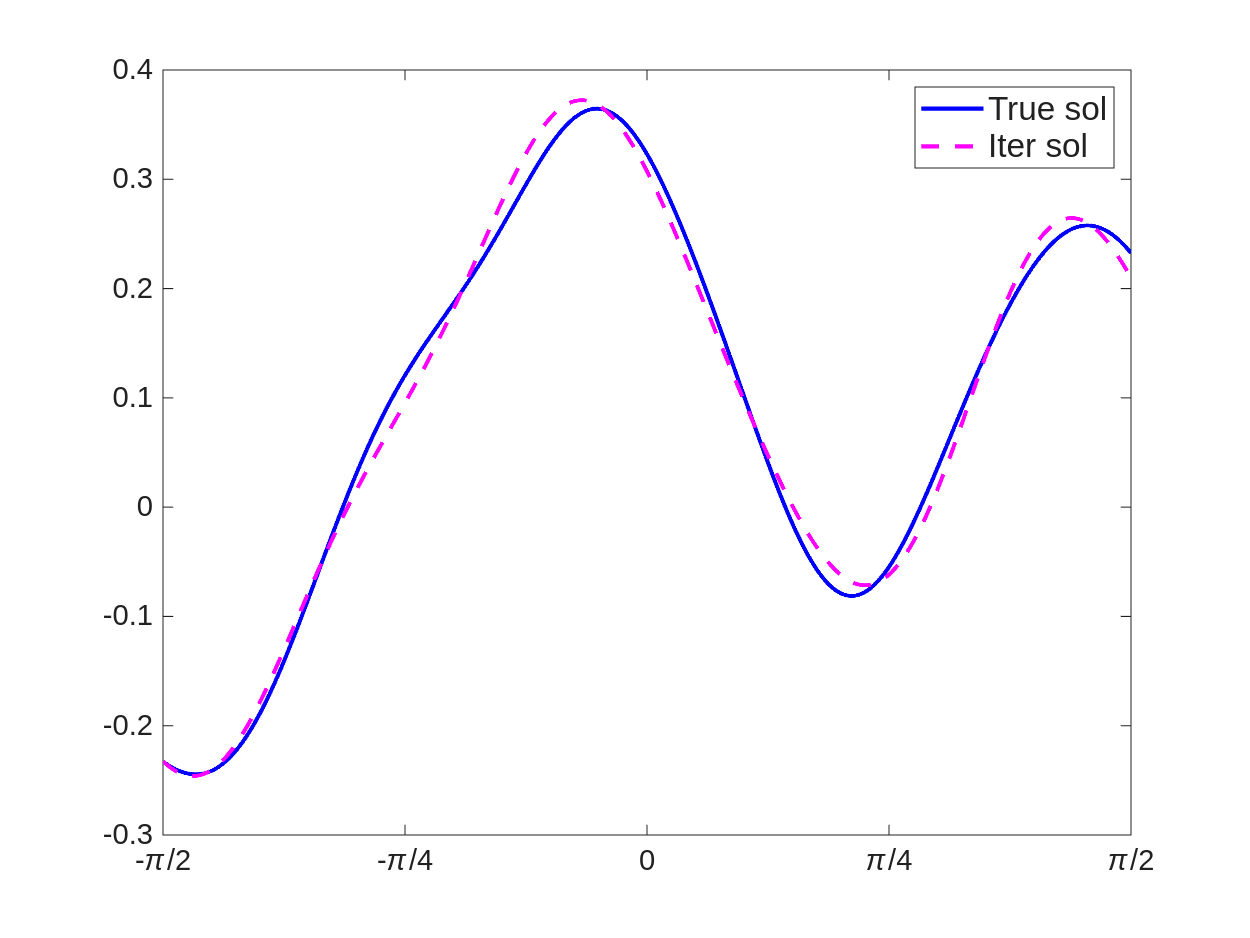}} \vspace{-4mm}
	\subfloat
	{\label{fig:2b}\includegraphics[width=0.45\textwidth]{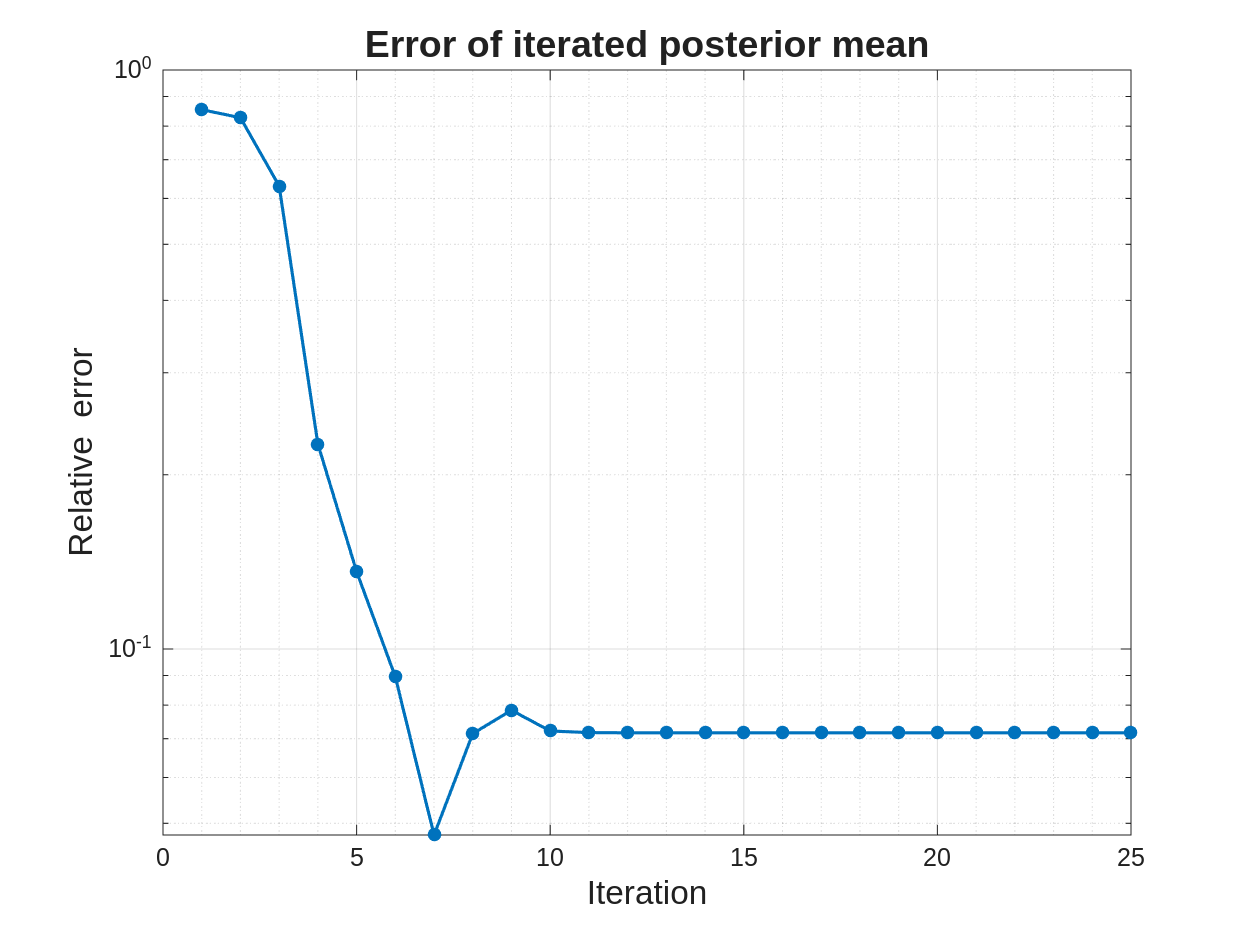}}\hspace{-5mm}
	\subfloat 
	{\label{fig:2c}\includegraphics[width=0.45\textwidth]{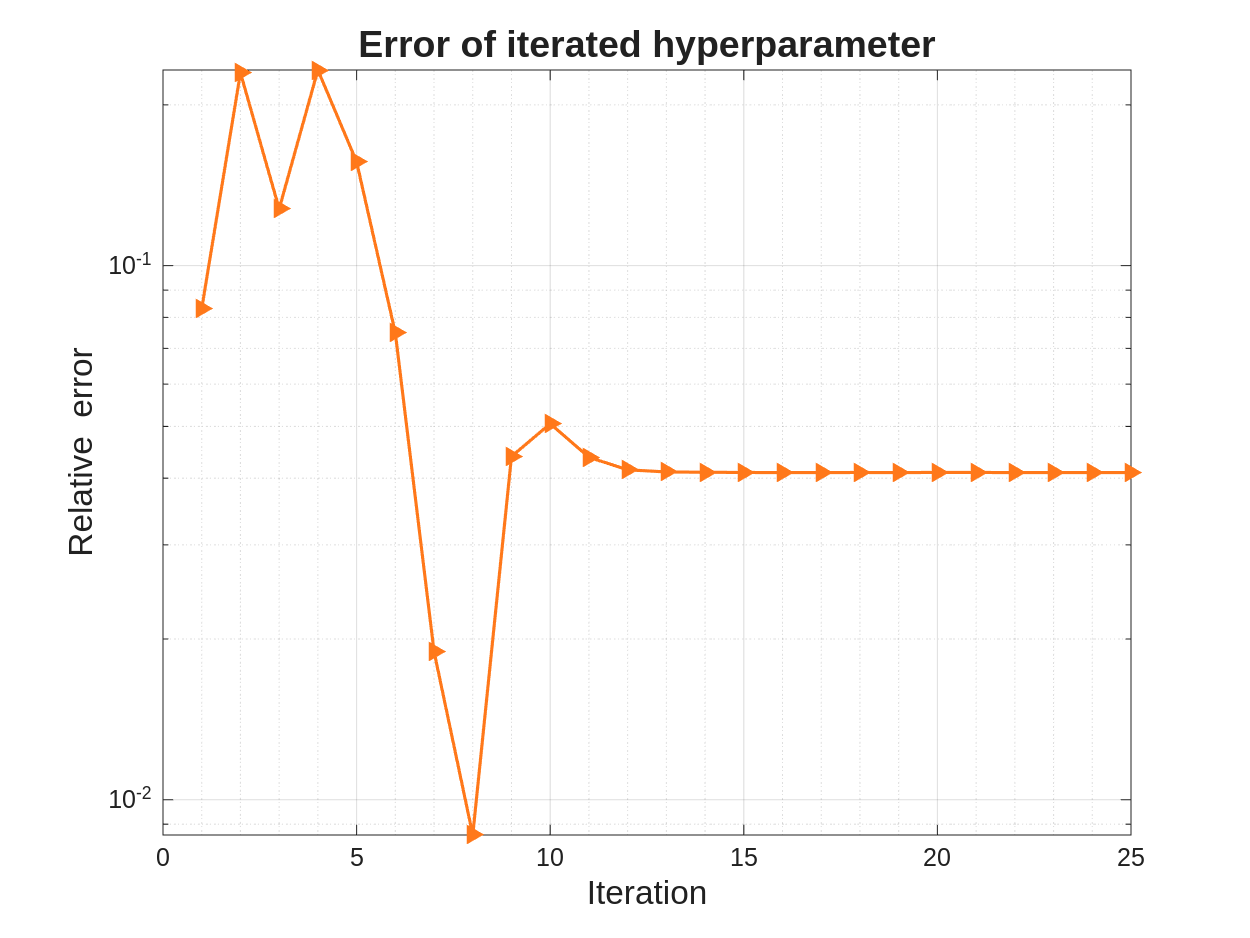}}
	\vspace{-2mm}
	\caption{Eigenvalue decay of $(M,\,\Gamma)$, reconstructed solution, and convergence behavior of the Q-GKB method for the first example.}
	\label{fig2}
\end{figure}

In \Cref{fig2}, we illustrate the eigenvalue decay of $(M,\,\Gamma)$ and the quantities $P_{\mathcal{G}_i}\Gamma^{-1}y$, which support \Cref{thm:qgkb-lanczos}. The relative error of the iterated hyperparameter is computed as $|\sigma_k-\sigma|/\sigma$, where $\sigma_k=1/\sqrt{\lambda_k}$. All generalized eigenvalues $\widehat{\mu}_i$ are simple and decay rapidly to zero without significant gaps, and all projected quantities $P_{\mathcal{G}_i}\Gamma^{-1}y$ are nonzero. Consequently, the Q-GKB method quickly capture all eigenspaces associated with nonzero generalized eigenvalues as $k$ increases. This behavior is due to the exponential decay of the eigenvalues of the forward operator induced by the integral kernel $\phi$. From the top-left subfigure, if we regard $\widehat{\mu}_i$ as negligible whenever it is smaller than $10^{-10}$, then by \Cref{thm:isom}, the effective dimension of the data space is approximately 25, which is much smaller than that of the parameter space. As a consequence, the bottom subfigures demonstrate rapid convergence of the Q-GKB iteration: the relative errors of both the reconstructed solution and the hyperparameter stabilize after approximately 15 iterations. The reconstructed solution at $k=25$ is shown to closely match the true solution.

\begin{figure}[!t]
	\centering
	\subfloat 
	{\label{fig:3a}\includegraphics[width=0.45\textwidth]{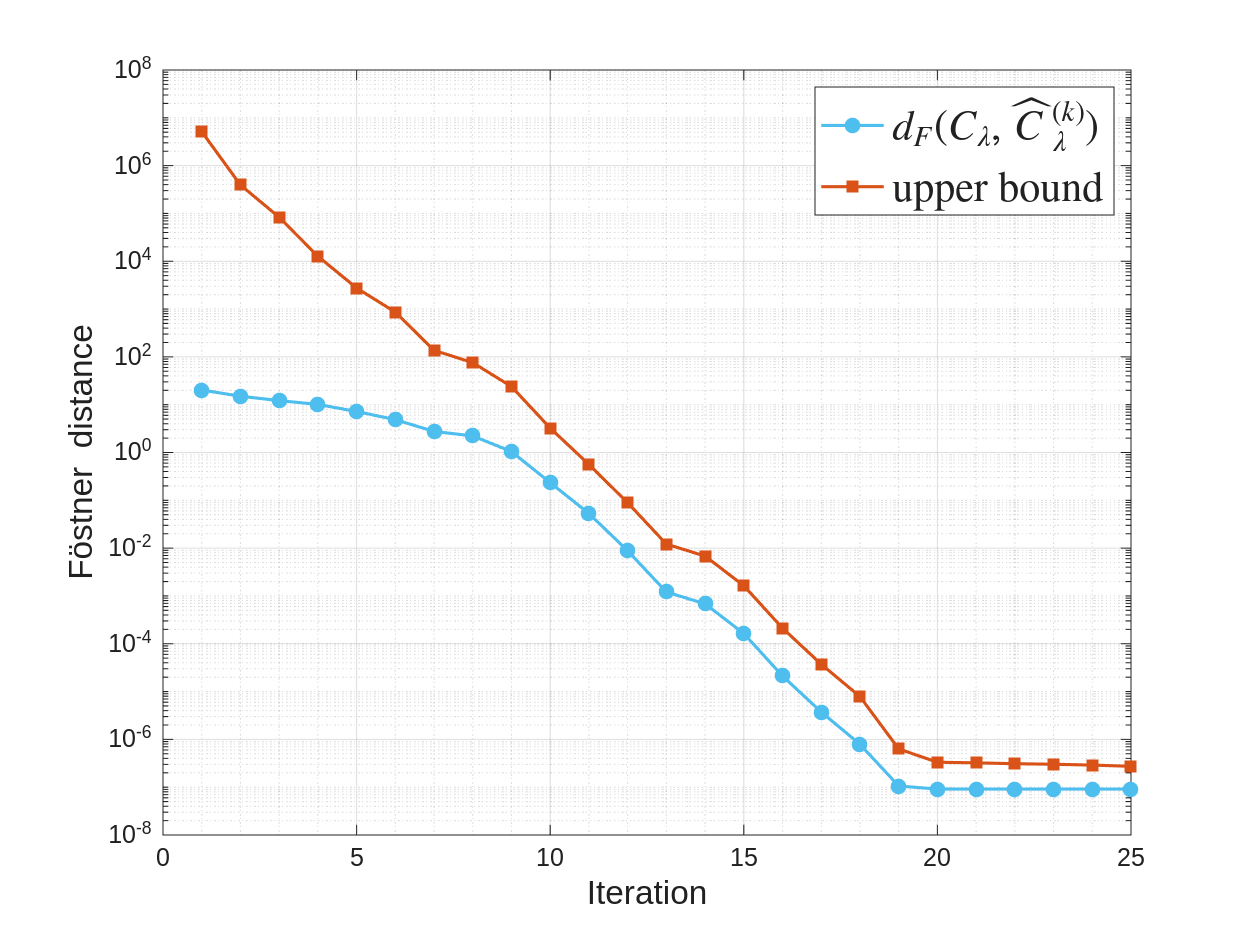}}\hspace{-5mm}
	\subfloat
	{\label{fig:3b}\includegraphics[width=0.45\textwidth]{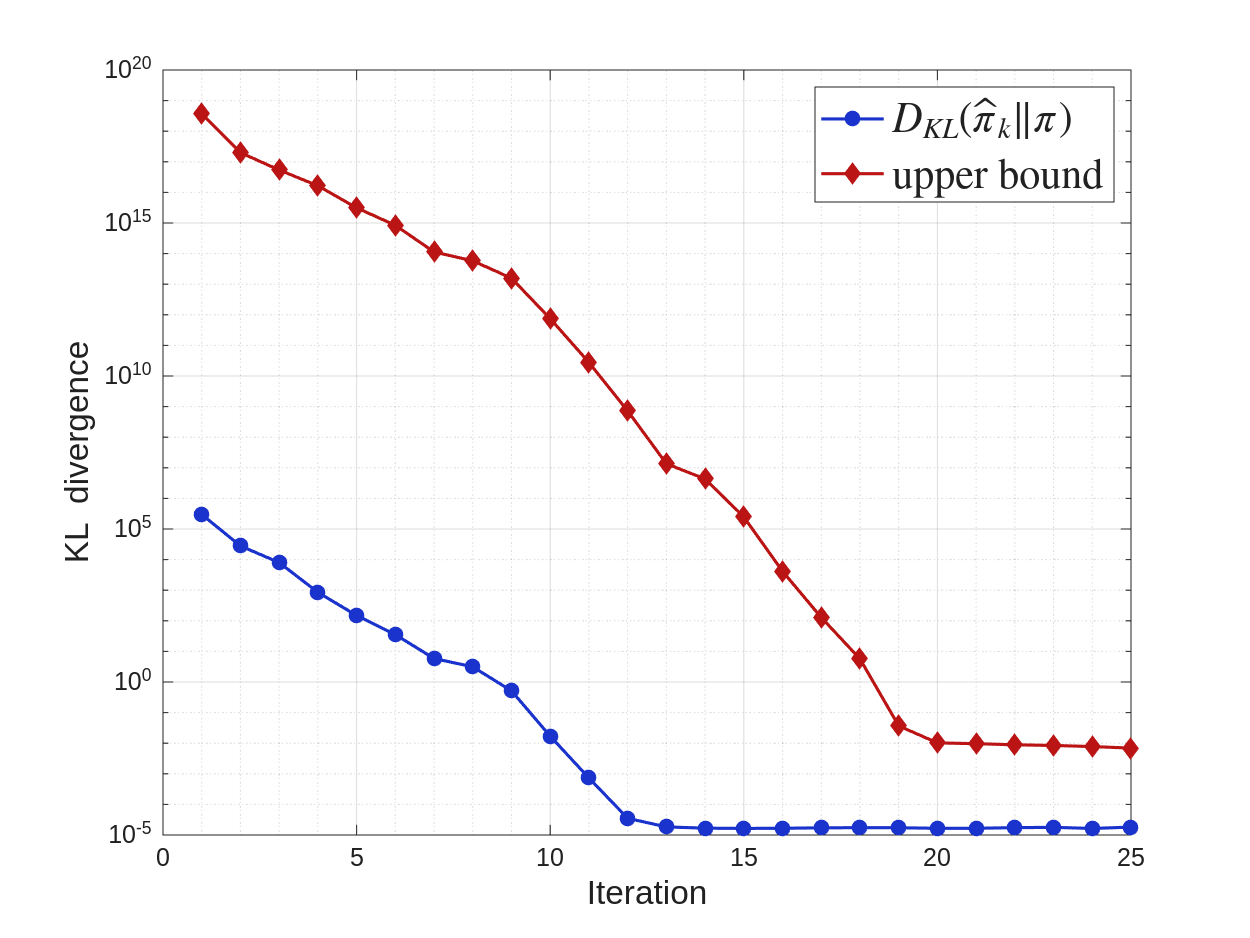}}\vspace{-2mm}
	\caption{The F{\"o}rstner distance and the KL divergence between the exact and approximate posterior, and their upper bounds.}
	\label{fig3}
\end{figure}

In \Cref{fig3}, we show the F{\"o}rstner distance $d_{F}(C_{\lambda},\widehat{C}_{\lambda}^{(k)})$ and the KL divergence $D_{\mathrm{KL}}(\widehat{\pi}_k,\pi)$ together with their upper bounds established in \Cref{thm:bnd_post}, where at each iteration we set $\lambda=\lambda_k$. As the Q-GKB iteration proceeds, both quantities and their upper bounds exhibit a clear decreasing trend. In the late stage of the iteration, the computed values of $d_{F}(C_{\lambda},\widehat{C}_{\lambda}^{(k)})$ and $D_{\mathrm{KL}}(\widehat{\pi}_k,\pi)$ appear to stagnate at a level around $10^{-5}$. This behavior is due to numerical instability in evaluating the exact posterior covariance, which becomes increasingly ill-conditioned. As a result, the computed F{\"o}rstner distance and KL divergence reach a saturation level determined by numerical precision. This saturation should be interpreted as a numerical precision barrier rather than a failure of the theoretical bounds. In contrast, the upper bounds derived from $\zeta_k$ and $\gamma_k$ continue to capture the true decay of the posterior approximation error, albeit at a very slow rate. In practice, these computable upper bounds can serve as reliable stopping criteria for iterative posterior approximation.

\begin{figure}[!htbp]
	\centering
	\subfloat 
	{\includegraphics[width=1\textwidth]{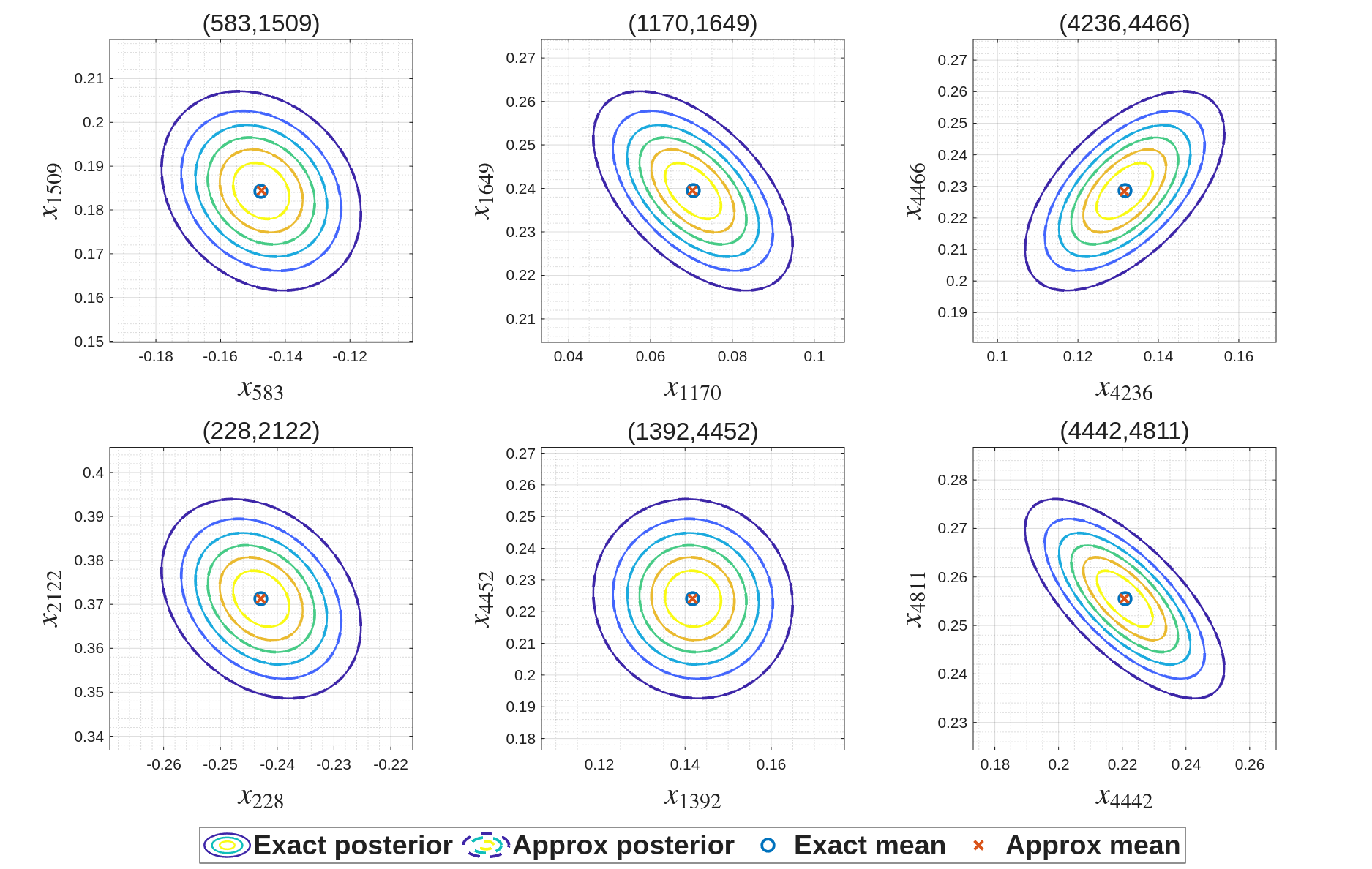}}
	\caption{Contour plots of the marginal distributions of six randomly selected $x_i$-$x_j$ pairs for both the exact and approximate posteriors.}
	\label{fig4}
\end{figure}

To further assess the quality of the posterior approximation at $k=25$, we compare the exact and approximate posteriors through their two-dimensional marginal distributions. In \Cref{fig4}, we depict contour plots of six randomly selected $x_i$-$x_j$ pairs. For the exact posterior, we set $\lambda=\lambda_{25}$ and compute its mean and covariance using \cref{post_mean1} and \cref{post_cov1}. The contours of the exact and approximate marginals are visually indistinguishable across all selected pairs, indicating an excellent agreement between the two distributions. This observation is consistent with the small KL divergence $D_{\mathrm{KL}}(\widehat{\pi}_k,\pi)$ at $k=25$. Indeed, an error on the order of $10^{-5}$ implies that the two distributions are nearly indistinguishable in practice. These results confirm that the Q-GKB approximation accurately captures the posterior structure in a low-dimensional data-informed Krylov subspace.

\subsection{Two-dimensional image deblurring}
In the second example, we consider a two-dimensional image deblurring problem on the domain $D = [0,1]^2$. Let $x : D \to \mathbb{R}$ denote the unknown image, and $b : D \to \mathbb{R}$ denote the observed blurred data. The forward model is given by the two-dimensional Fredholm integral equation of the first kind:
\begin{equation}\label{eq:forward_model}
    b(\boldsymbol{s}) = 
    \iint_D f_{\mathrm{psf}}(\boldsymbol{s},\boldsymbol{t})\, x(\boldsymbol{t})\,\mathrm{d}t, 
    \quad \boldsymbol{s} \in D,
\end{equation}
where $f_{\mathrm{psf}}$ is a Gaussian point spread function (PSF) defined as
\begin{equation}\label{eq:psf}
    f_{\mathrm{psf}}(s,t)
    = \exp\!\left(-\frac{\|\boldsymbol{s} - \boldsymbol{t}\|_2^2}{l_{\mathrm{blur}}}\right),
\end{equation}
with $l_{\mathrm{blur}} > 0$ controlling the size of the blur spreading. In the experiment we set $l_{\mathrm{blur}}=0.01$. We discretize the domain $D$ using $n_1$ uniform grids on each dimension such that the true image has $n_1\times n_1$ pixels; the blurred observations are collected on a coarser $m_1 \times m_1$ grid, where we take $n_1 = 256$ and $m_1 = 128$. This leads to a discrete forward matrix $G \in \mathbb{R}^{16384 \times 65536}$.

The unknown image $x$ is modeled as a two-dimensional Gaussian random field with a Matérn-type covariance structure. We adopt a separable Matérn kernel of the form
\begin{equation}\label{eq:separable_matern}
    \mathrm{Cov}(x(\boldsymbol{s}), x(\boldsymbol{t}))
    = K_1(s_1,t_1)\, K_1(s_2,t_2), \quad \boldsymbol{s}=(s_1,s_2),\; \boldsymbol{t}=(t_1,t_2).
\end{equation}
Here, $K_1$ denotes the one-dimensional Matérn kernel
\begin{equation}\label{eq:matern1d}
    K_1(s,t)
    = \sigma^2 \frac{2^{1-\nu}}{\Gamma(\nu)}\left( \frac{\sqrt{2\nu}\, |s-t|}{\rho} \right)^{\nu}
    B_{\nu}\!\left( \frac{\sqrt{2\nu}\, |s-t|}{\rho} \right)
\end{equation}
with smoothness parameter $\nu > 0$, correlation length $\rho > 0$, and marginal variance $\sigma^2$, where $\Gamma$ is the gamma function and $B_{\nu}$ is the modified Bessel function of the second kind \cite{Williams2006gaussian}. In the experiment we set $\nu = 3$, $\rho = 0.1$, and $\sigma = 1$, and a random sample drawn from this Gaussian random field on the $n_1 \times n_1$ grid is used as the ground truth. A white Gaussian noise with relative noise level $0.01$ is then added to the exact observation $G x_{\mathrm{true}}$ to generate the noisy data. The true image and the corresponding noisy observation are shown in~\Cref{fig5}.

\begin{figure}[htbp]
	\centering
	\subfloat 
	{\includegraphics[width=0.8\textwidth]{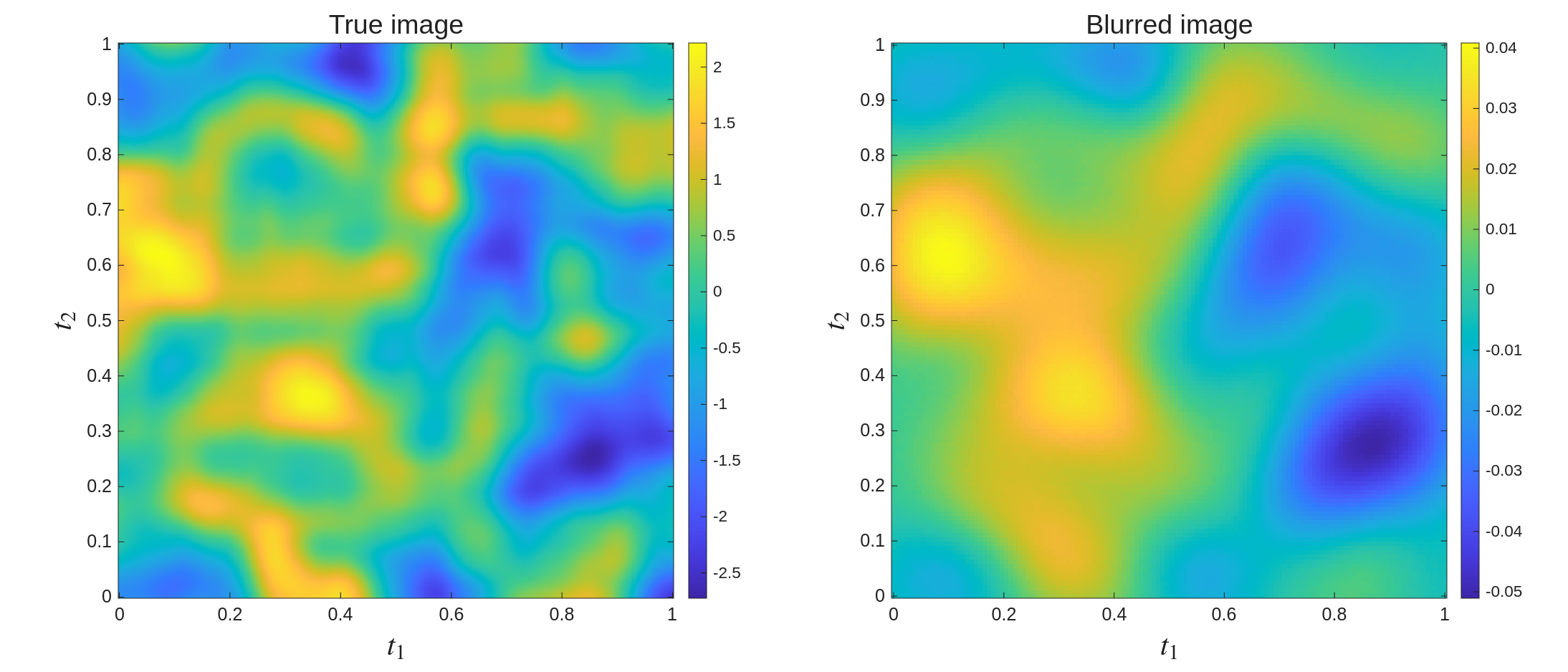}}\vspace{-2mm}
	\caption{Illustration of the true image and noisy blurred image.}
  \label{fig5}
\end{figure}

Similar to the first example, the prior covariance matrix $\Sigma$ is constructed using the separable Matérn kernel with $\nu = 3$, $\rho = 0.1$, and $\sigma = 1$. This leads to Kronecker structure $\Sigma = \Sigma_1 \otimes \Sigma_1$, where $\Sigma_1 \in \mathbb{R}^{256 \times 256}$ is the covariance matrix induced by $K_1$ on a uniform grid of 256 points in $[0, 1]$. While $\Sigma$ is initially formed with $\sigma = 1$, the marginal variance is treated as unknown and is iteratively estimated via the QGKB-EB method.

\begin{figure}[!t]
	\centering
  \subfloat 
	{\label{fig:6a0}\includegraphics[width=0.35\textwidth]{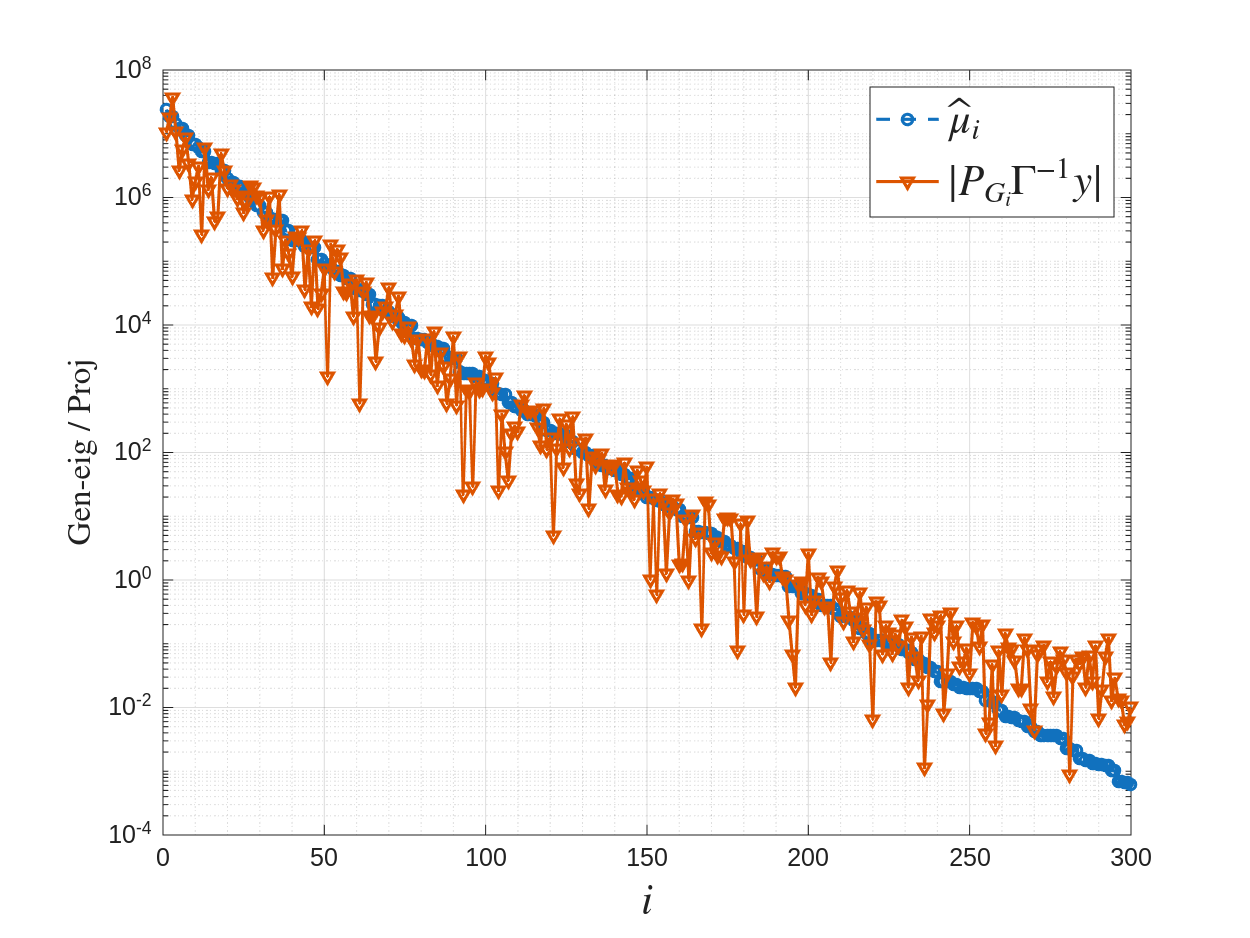}}\hspace{-5mm}
	\subfloat 
	{\label{fig:6a}\includegraphics[width=0.35\textwidth]{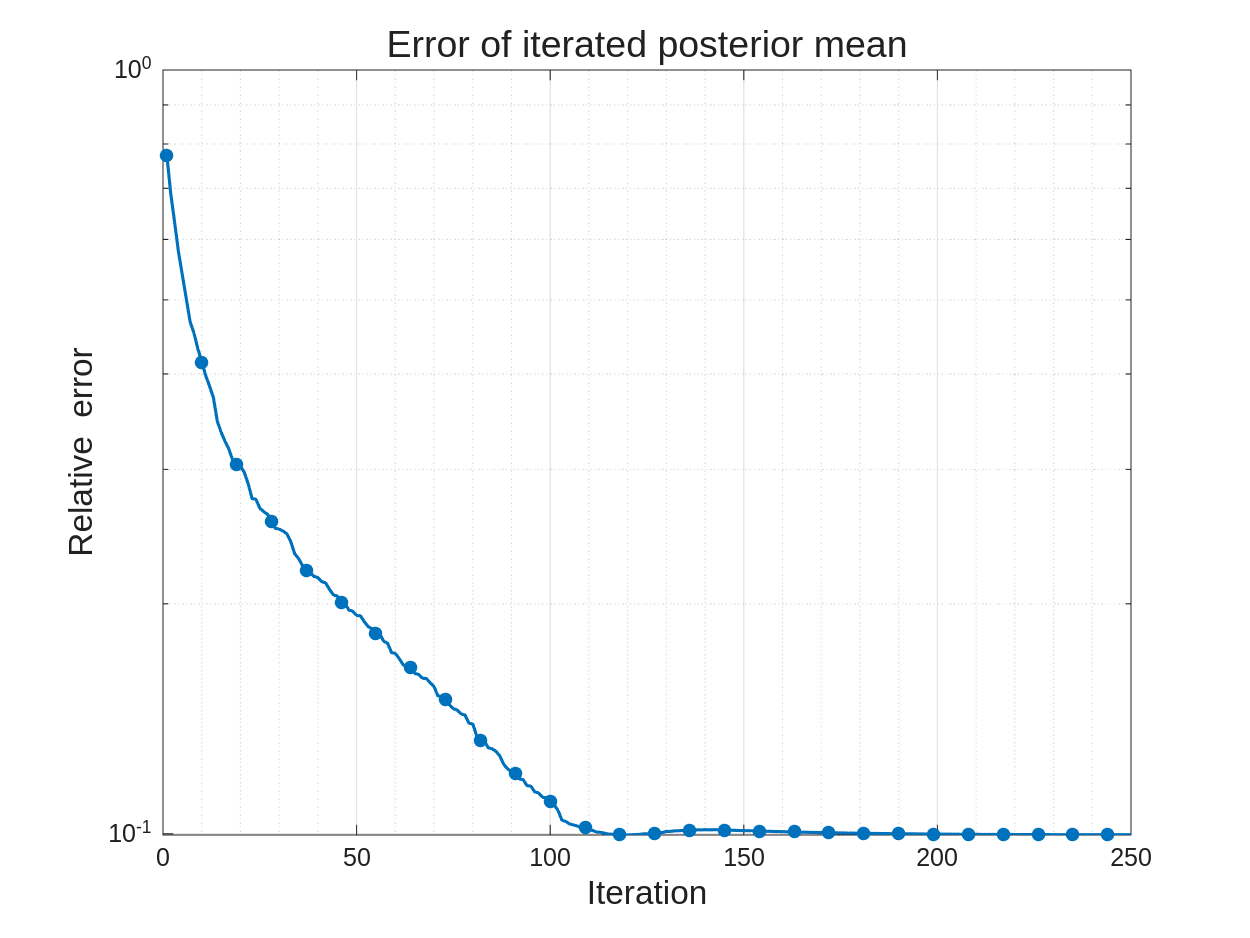}}\hspace{-5mm}
	\subfloat
	{\label{fig:6b}\includegraphics[width=0.35\textwidth]{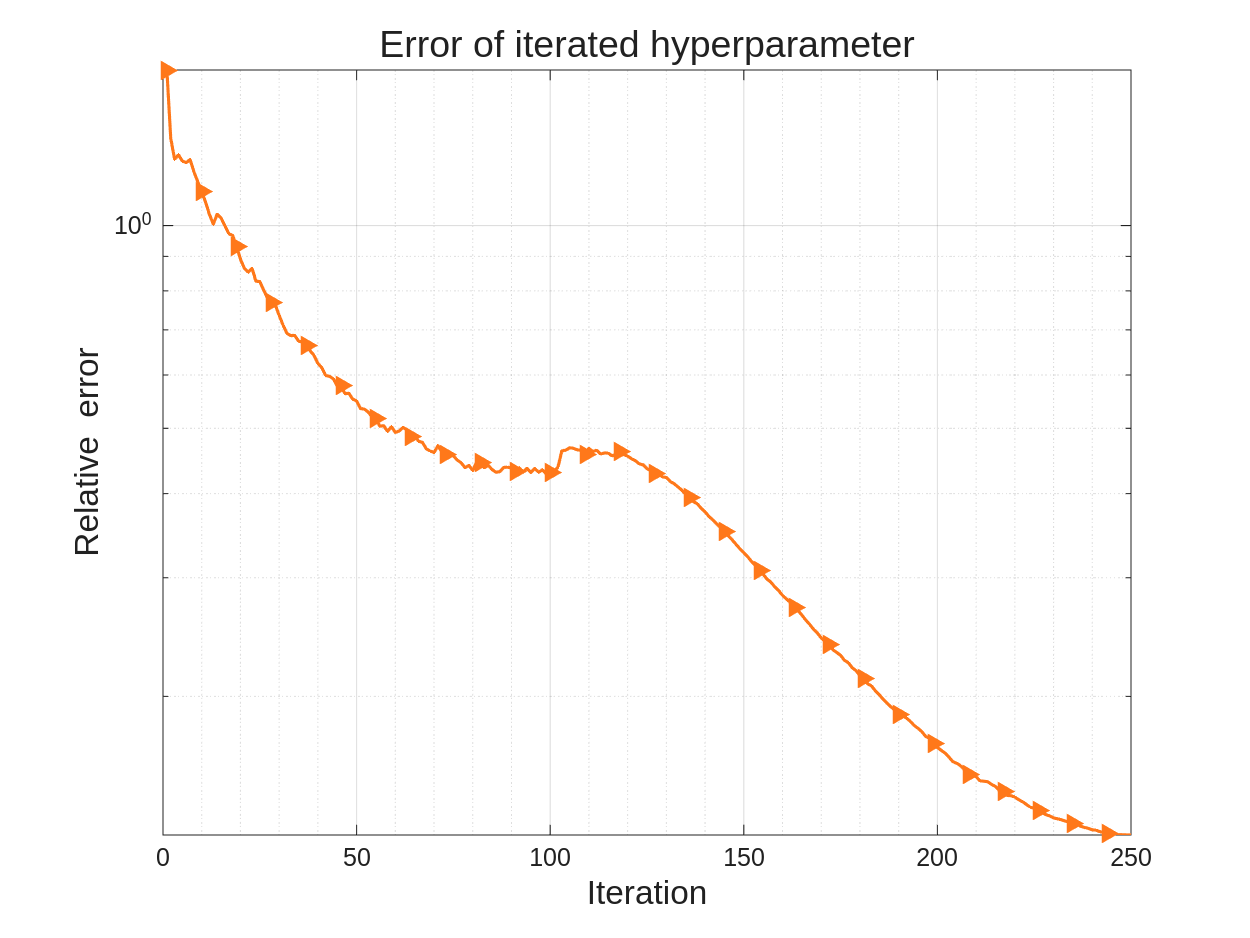}}\vspace{-2mm}
	\caption{Eigenvalue decay of $(M,\,\Gamma)$ and convergence behavior of the Q-GKB method for the first example.}
	\label{fig6}
\end{figure}

In \Cref{fig6}, we present the eigenvalue decay of the matrix pair $(M,\,\Gamma)$ and the projected quantities $P_{\mathcal{G}_i}\Gamma^{-1}y$ for the second example. We find that all generalized eigenvalues $\widehat{\mu}_{i}$ have a multiplicity of one and decay gradually toward zero without significant spectral gaps. Furthermore, since all projected quantities $P_{\mathcal{G}_i}\Gamma^{-1}y$ are non-zero, the Q-GKB method is capable of capturing all eigenspaces associated with nonzero generalized eigenvalues as $k$ increases. With $l_{\text{blur}}=0.01$, this example is a moderately ill-posed problem, which has a relatively slower decaying rate of the eigenvalues of the forward operator compared to the first example. Consequently, the generalized eigenvalues of $(M,\,\Gamma)$ decay at a moderate rate, resulting in an effective data-space dimension of several hundreds. Accordingly, the convergence of the relative errors for the reconstructed solution stagnates after about 150 iterations, while the hyperparameter continues to evolve before eventually stabilizing after about 250 iterations.

\begin{figure}[!t]
	\centering
	\subfloat 
  {\includegraphics[width=1.0\textwidth]{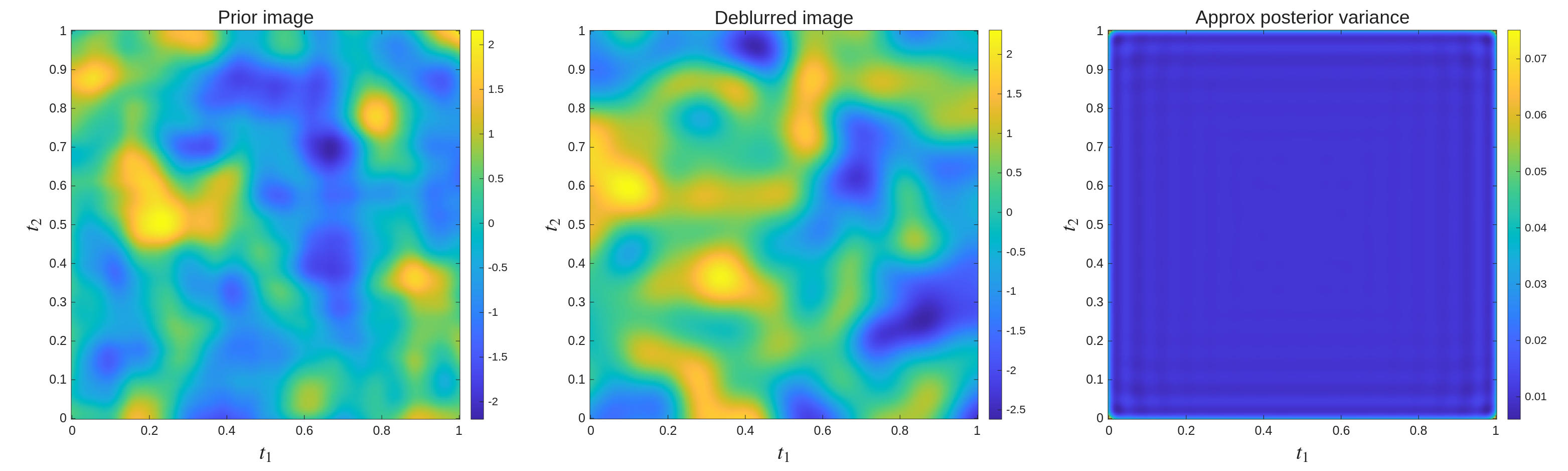}} \\
  \vspace{-5mm}
  \subfloat 
	{\includegraphics[width=0.72\textwidth]{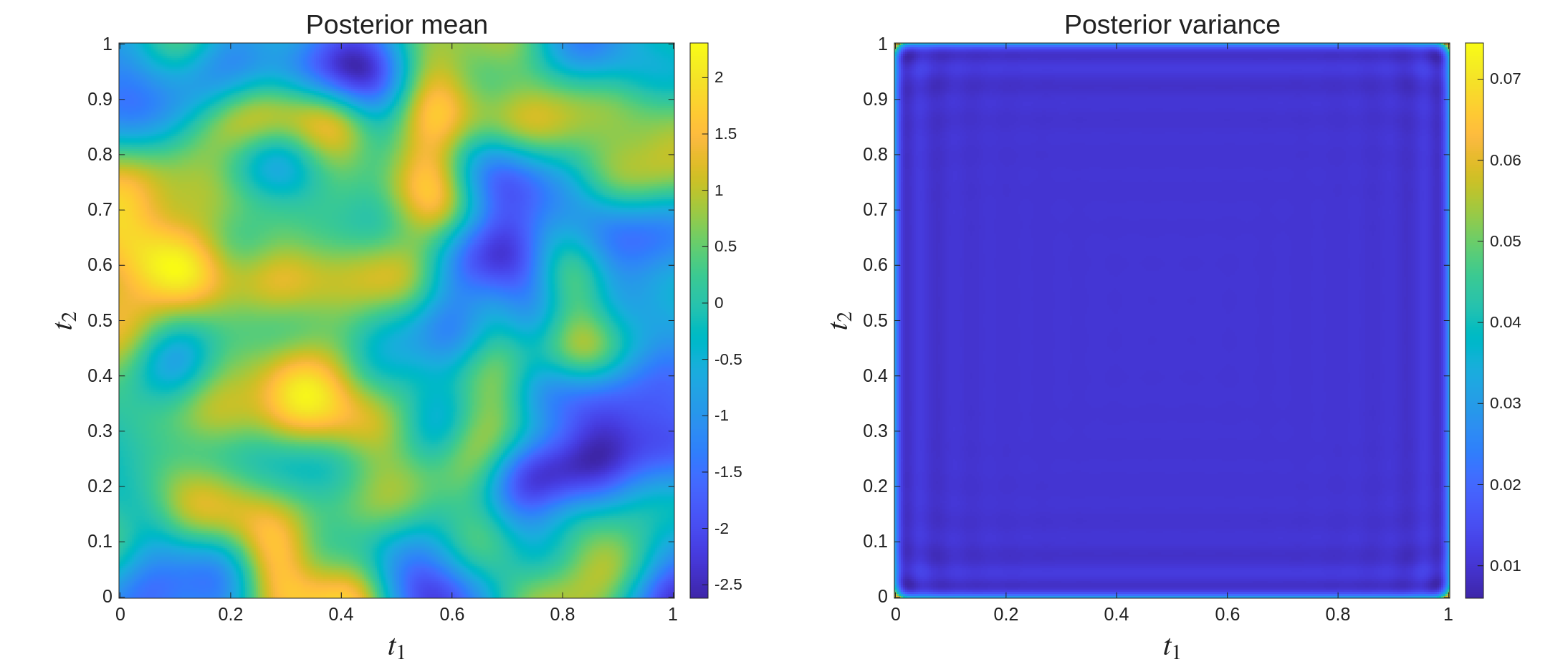}}\vspace{-2mm}
	\caption{Illustration of the prior image, deblurred image at $k=250$ and the image associated with exact posterior mean, and the corresponding variance of each pixel.}
	\label{fig7}
\end{figure}

We use \Cref{fig7} to illustrate the effectiveness of the QGKB-EB method for the image deblurring problem. For this large-scale case, computing the KL divergence to quantify approximation discrepancy is computationally prohibitive; instead, we examine the pixel-wise variance, represented by the diagonal elements of the posterior covariance matrix. The prior image shown is a random sample drawn from the prior distribution. The deblurred image and its corresponding variance represent the approximate posterior mean and covariance computed at $k=250$. For comparison, the exact posterior mean and variance are computed using $\lambda = \lambda_{250}$ according to \eqref{post_mean1} and \eqref{post_cov1}. Comparing the approximate and exact posteriors, we observe that the reconstructed image progressively converges to an accurate approximation of the posterior mean as the iteration proceeds. Furthermore, the approximate variance aligns closely with the exact posterior variance, thereby demonstrating the effectiveness of the Q-GKB-EB method for posterior approximation within the data-space formulation.

\begin{figure}[htbp]
	\centering
	\subfloat 
	{\label{fig:66a}\includegraphics[width=0.48\textwidth]{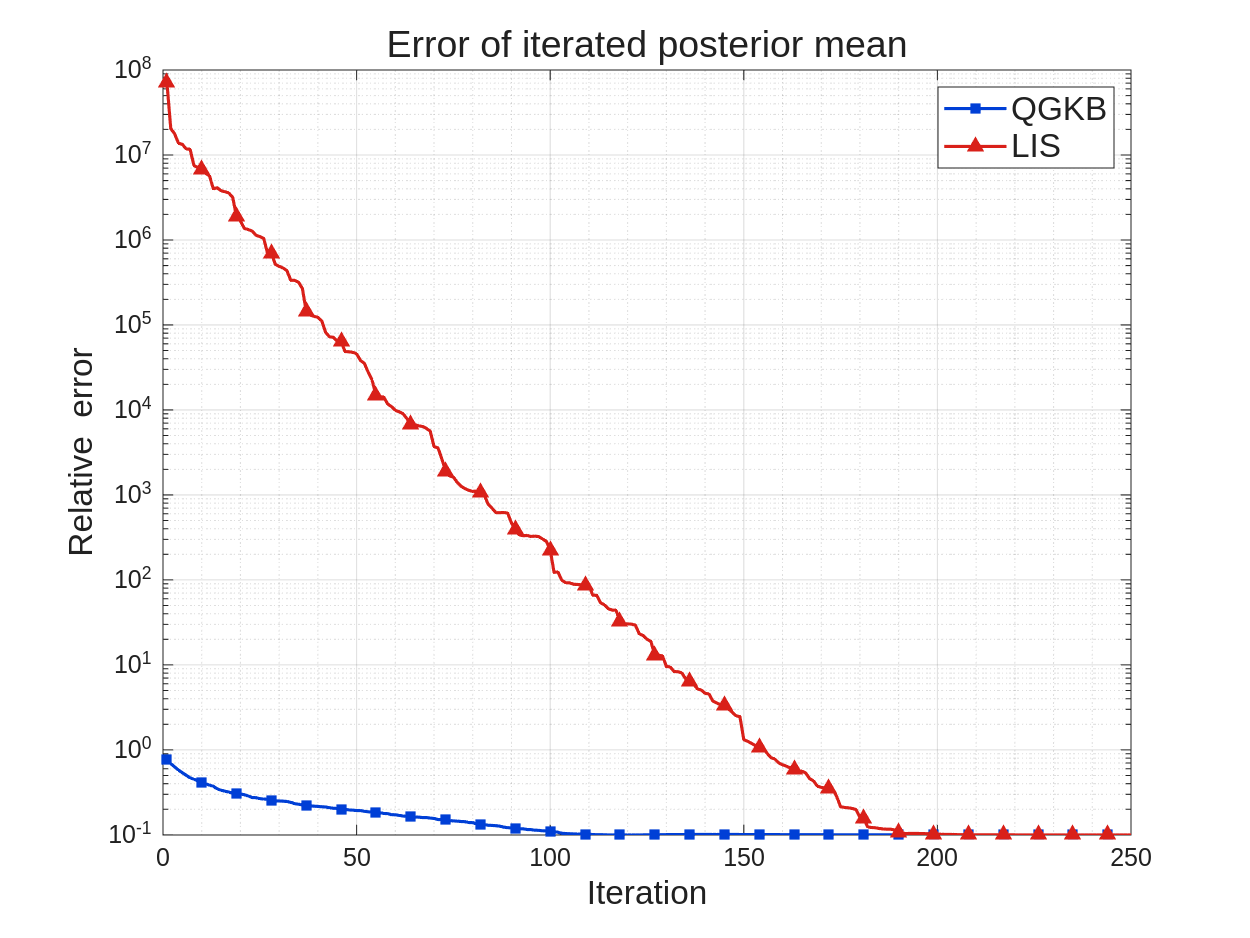}}\hspace{-5mm}
	\subfloat
	{\label{fig:66b}\includegraphics[width=0.48\textwidth]{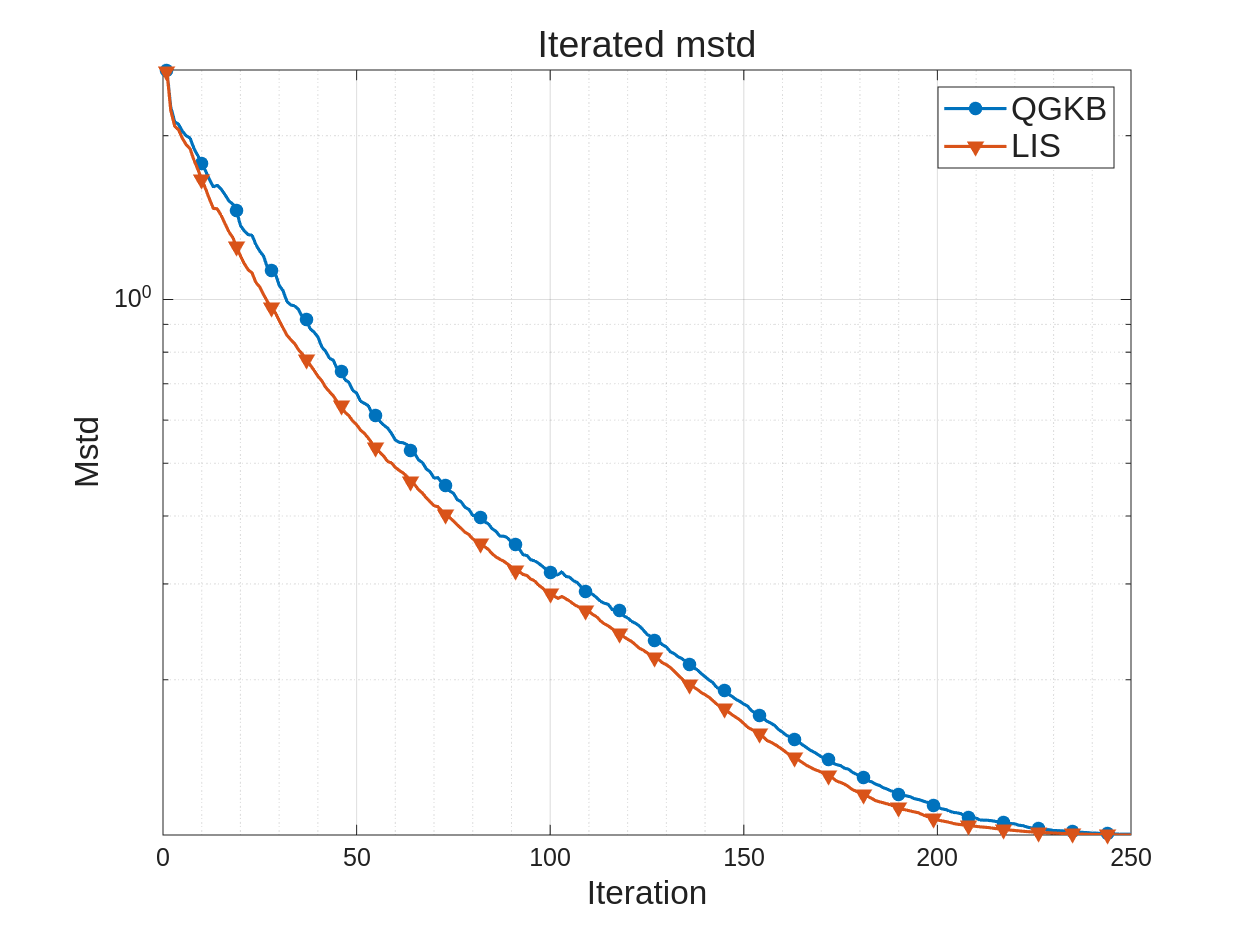}}\vspace{-2mm}
	\caption{The relative error of the approximate posterior mean and the corresponding mean standard variation computed by the Q-GKB method and LIS method.}
	\label{fig:comp}
\end{figure}

\begin{figure}[!t]
	\centering
	\subfloat 
	{\includegraphics[width=0.6\textwidth]{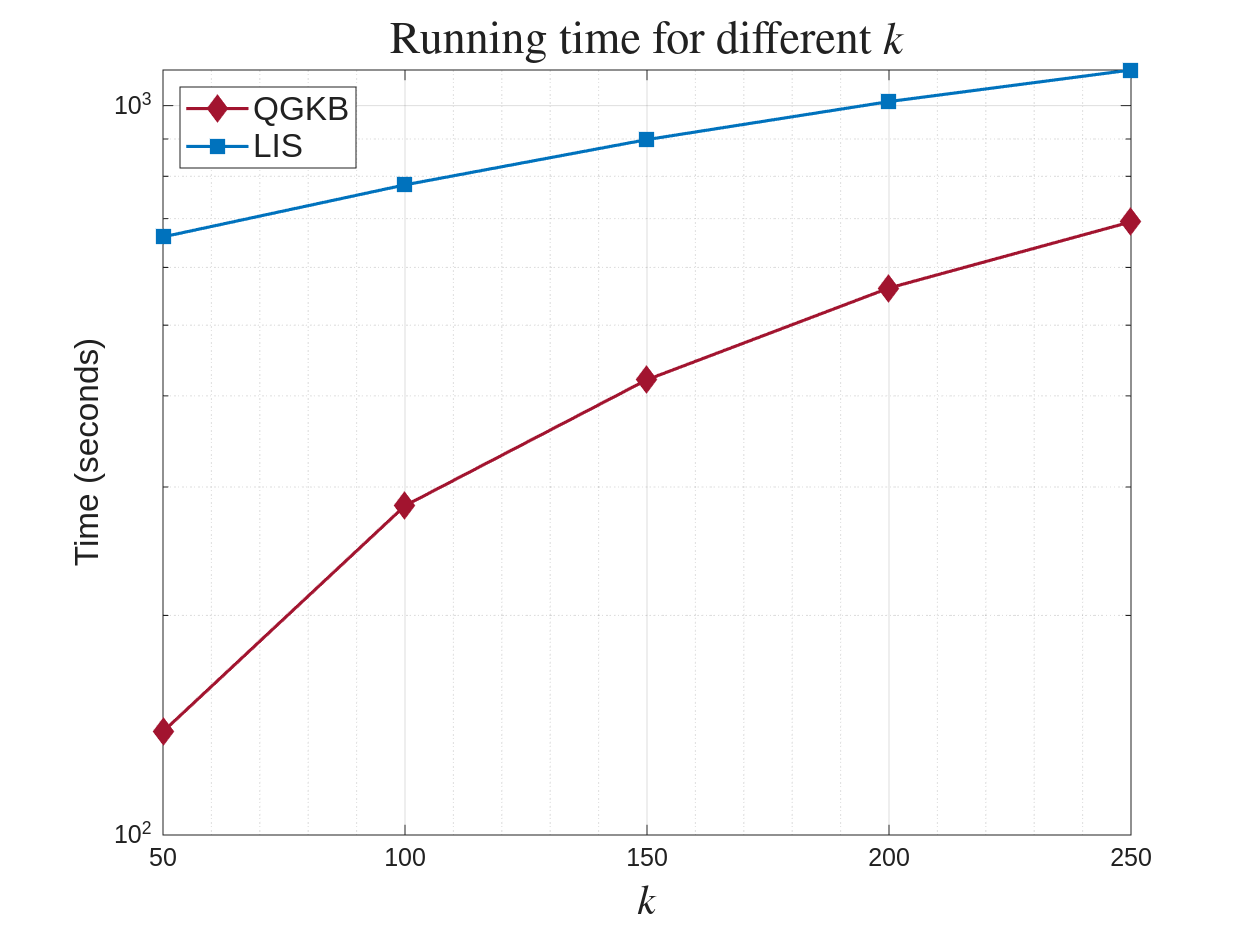}}\vspace{-2mm}
	\caption{Comparison of the running time for the Q-GKB method and LIS method as the iteration number or LIS dimension increases.}
	\label{fig:time}
\end{figure}

To further evaluate approximation accuracy and computational efficiency, we compare the proposed Q-GKB method with the direct LIS approach, which serves as a baseline. As shown in \cite{spantini2015optimal,spantini2017goal}, the LIS method provides an optimal low-rank approximation of the posterior covariance when the prior-to-posterior update is restricted to positive semidefinite matrices of rank at most $k$. For a fair comparison, the LIS method uses the same hyperparameter $\lambda_k$ estimated by the QGKB-EB approach at iteration $k$. To quantify the convergence behavior of the covariance approximation, we define the mean standard deviation (mstd) as
\begin{equation}\label{mstd}
  \mathrm{mstd} = \sqrt{\frac{1}{n}\sum_{i=1}^{n}\mathrm{var}_i} \ ,
\end{equation}
where $\mathrm{var}_i$ denotes the posterior variance at the $i$th pixel. To compute a $k$-dimensional LIS, we follow \cite{spantini2015optimal}. Specifically, we first compute the Cholesky factorization $\Sigma = SS^\top$, and then compute the leading $k$ eigenpairs of $S^\top H S$ using the MATLAB built-in function \texttt{eigs}, which yields the dominant LIS directions. From \Cref{fig:comp}, we have two important observations: first, in terms of the posterior mean, the Q-GKB method converges significantly faster than the LIS approach; second, for the covariance approximation measured by mstd, the LIS method exhibits slightly faster convergence. This difference can be explained by the intrinsic mechanisms of the two approaches: the Q-GKB iteration, initialized from the data vector $y$, prioritizes directions that are most relevant for reconstructing the posterior mean; in contrast, the LIS method targets the globally dominant eigendirections of the data-informed subspace, leading to a more efficient approximation of the posterior covariance. Nevertheless, as $k$ becomes sufficiently large to capture the dominant data-informed subspaces, both methods achieve nearly identical accuracy in approximating the posterior distribution. This observation is consistent with the theoretical characterization of data-informed Krylov subspaces and their relation to the LIS method.

Before comparing the running time of the two methods, it is important to emphasize that for a $65536 \times 65536$ dense prior covariance $\Sigma$, the LIS method is computationally prohibitive on a standard personal laptop due to the memory requirements for storing $\Sigma$, let alone computing its Cholesky factorization. Thus, for a fair comparison of the running time, we run both the LIS and Q-GKB method in a high performance computing (HPC) cluster. As shown in Figure~\ref{fig:time}, the total wall-clock time of Q-GKB remains significantly lower than that of LIS across all tested subspace dimensions $k$, even when using MATLAB's highly optimized \texttt{eigs} implementation. The dominant computational cost of LIS arises from the Cholesky factorization of $\Sigma$, which scales as $\mathcal{O}(n^3)$ and becomes prohibitively expensive at this resolution.

\subsection{X-ray computed tomography}
In the third example, we consider a two-dimensional X-ray computed tomography (CT) problem, where the goal is to reconstruct an unknown attenuation function $f: \Omega \to \mathbb{R}$ supported on a domain $\Omega \subset \mathbb{R}^2$ from a finite collection of line-integral measurements corrupted by noise. The mathematical model of X-ray CT is described by the Beer--Lambert law: when a 
monochromatic X-ray beam of initial intensity $I_0$ traverses a medium along a line $\ell$, the transmitted intensity satisfies
\begin{equation}
    I = I_0 \exp\!\left( -\int_{\ell} f(\boldsymbol{x})\, \mathrm{d}s \right),
\end{equation}
so that the measured log-attenuation along $\ell$ equals the line integral of $f$. The corresponding measurement is thus $-\log(I/I_0)$. The forward operator can be described by the Radon transform
\begin{equation}\label{eq:radon}
  \mathcal{R}[f](\phi, s) 
   = \int_{-\infty}^{\infty} f\!\left( s \boldsymbol{\theta} + t \boldsymbol{\theta}^\perp \right) \mathrm{d}t, 
   \qquad \phi \in [0, \pi),\quad s \in \mathbb{R}, 
\end{equation}
where $\boldsymbol{\theta} = (\cos\phi, \sin\phi)$ is the unit vector in the projection direction and $\boldsymbol{\theta}^\perp = (-\sin\phi, \cos\phi)$ is its orthogonal complement. The pair $(\phi,s)$ parametrizes all oriented lines in $\mathbb{R}^2$, where $\phi$ is the projection angle and $s$ is the signed distance from the origin to the line. We refer to \cite{Buzug2008,natterer2001mathematics} for further details.

The discrete linear system is constructed by discretizing \cref{eq:radon} on a uniform $N \times N$ pixel grid of $\Omega=[-1,1]^2$, yielding a vector $x \in \mathbb{R}^n$ with $n = N^2$, where the $j$th element $x_j$ represents the attenuation coefficient in the $j$th pixel. Measurements are collected at $n_\phi$ equally spaced projection angles $\phi_i \in [0, \pi)$ and $n_s$ the the number of rays per angle, giving $m = n_\phi n_s$ measurements assembled into a vector belonging to $\mathbb{R}^m$. The forward matrix $G \in \mathbb{R}^{m \times n}$ has entries $G_{ij} = \mathrm{length}\!\left( \mathrm{ray}_i \cap \mathrm{pixel}_j \right)$, i.e., the intersection length of the $i$th ray with the $j$th pixel, computed via the standard ray-driven projection model \cite{natterer2001mathematics}. We set $N=256$, and choose $\{\phi_i\}_{i=1}^{n_\phi} = \{\ang{0},\ang{2}, \ang{4} \dots, \ang{178}\}$ with $n_s = \mathrm{round}(\sqrt{2}\,N)$, where $\mathrm{round}(a)$ denotes the nearest integer to $a$. With this choice, we have $G\in\mathbb{R}^{32580 \times 65536}$. The ground-truth $x_{\mathrm{true}}$ is taken from \cite{Gazzola2019}, and we generate noisy data $y$ by adding a white Gaussian noise to $Gx_{\mathrm{true}}$ with noise level being $0.002$. The true image and the corresponding noisy observation are shown in~\Cref{fig9}.

\begin{figure}[!t]
	\centering
	\subfloat 
	{\includegraphics[width=0.8\textwidth]{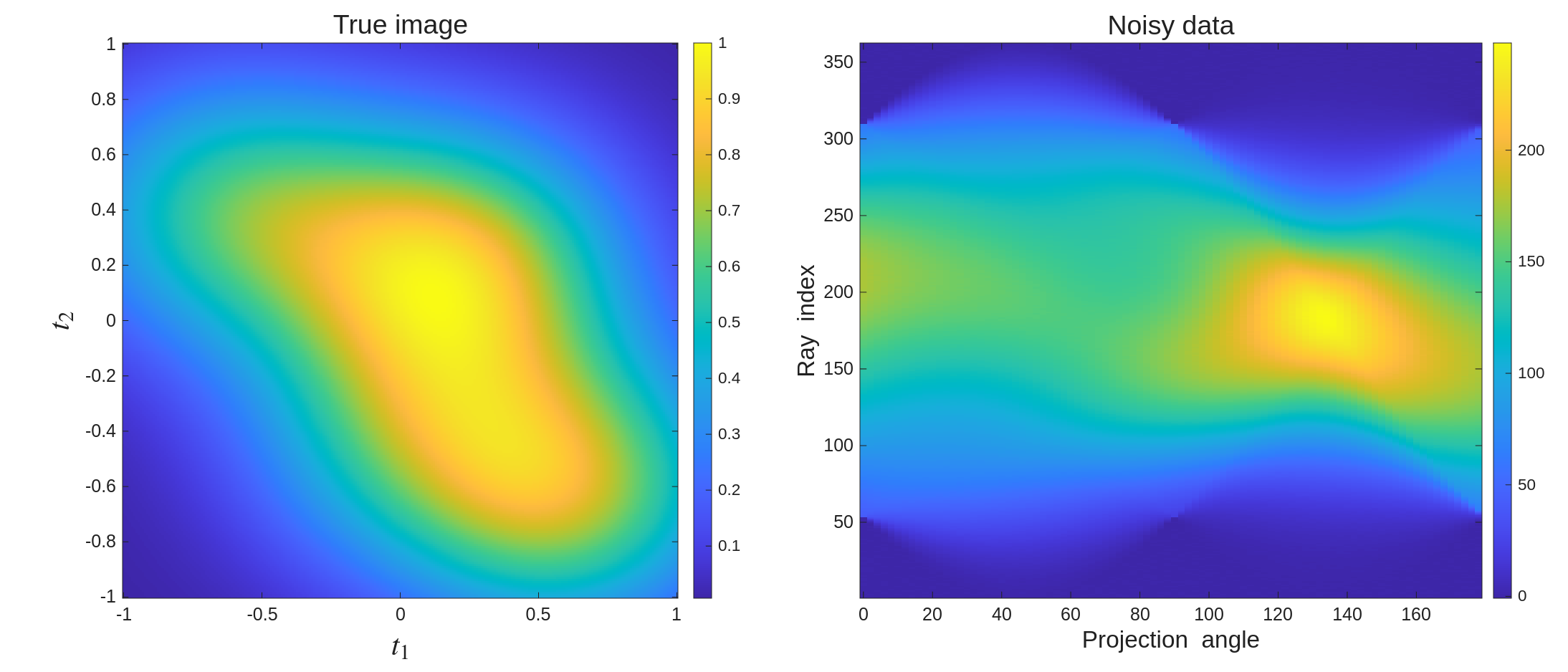}}\vspace{-2mm}
	\caption{Illustration of the true image and noisy observation data.}
	\label{fig9}
\end{figure}

To reconstructed the true image, we place a Gaussian prior on the unknown $f$ with covariance induced by the Mat\'ern kernel
\begin{equation}\label{eq:matern_kernel}
    K(\boldsymbol{x}, \boldsymbol{y})
    = \sigma^2\frac{2^{1-\nu}}{\Gamma(\nu)}
    \left(\frac{\sqrt{2\nu}\, \|\boldsymbol{x}-\boldsymbol{y}\|_{2}}{\rho}\right)^{\nu}
    B_{\nu}\!\left(\frac{\sqrt{2\nu}\, \|\boldsymbol{x}-\boldsymbol{y}\|_{2}}{\rho}\right),
\end{equation}
and we set $\nu=5/2$, $\rho=2$ and $\sigma=1$ to construct the matrix $\Sigma$. The value of $\sigma$ is iteratively updated by the QGKB-HB method via the relation $\sigma^2=1/\lambda$. To ensure computational tractability for the $256^2 \times 256^2$ covariance matrix $\Sigma$, we implement the matrix-vector multiplication $\Sigma v$ in a matrix-free way. Given the stationarity of the Matérn kernel, the covariance operator on a uniform grid can be computed as a circular convolution via the Fast Fourier Transform (FFT). To mitigate periodic wrap-around artifacts, we employ a circulant embedding strategy: the input $N \times N$ image is first augmented to size $2N \times 2N$ through zero-padding; then the covariance action is performed as point-wise multiplication in the spectral domain; the final result is recovered by extracting the $N \times N$ block corresponding to its original domain. This approach reduces the computational complexity of $\Sigma v$ from $O(N^4)$ to $O(N^2 \log N)$, enabling efficient reconstruction on high-resolution grids without explicitly forming the dense covariance matrix. For further implementation details, see, e.g., \cite{wood1994simulation, nowak2003efficient, lindgren2011explicit}

\begin{figure}[!t]
	\centering
	\subfloat 
	{\includegraphics[width=0.95\textwidth]{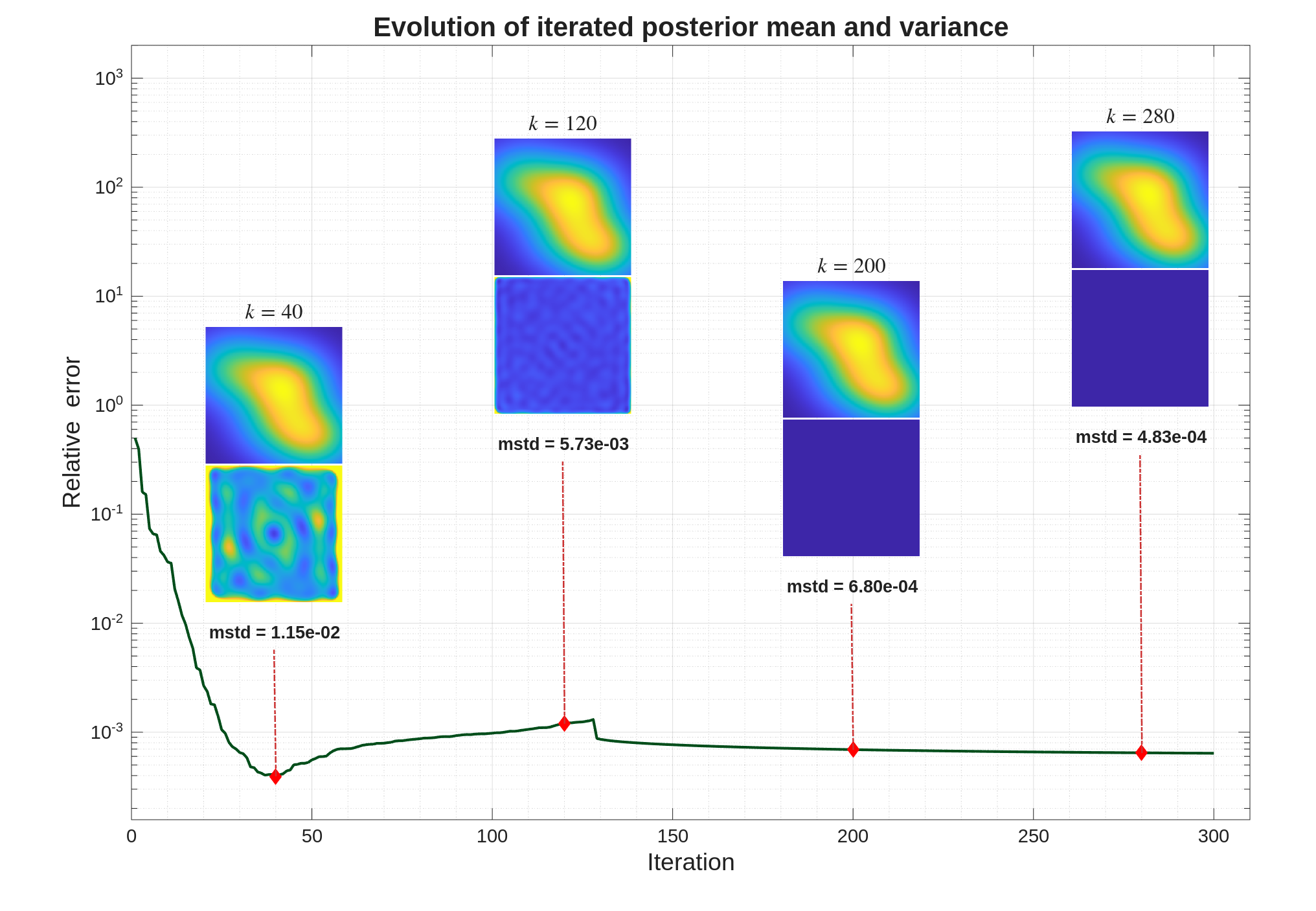}}\vspace{-3mm}
	\caption{Evolution of the approximate posterior mean and variance during the Q-GKB iteration. The top and bottom images show the approximate mean and the corresponding pixel-wise variance, respectively.}
	\label{fig11}
\end{figure}

\begin{figure}[!t]
	\centering
	\subfloat 
	{\includegraphics[width=1.0\textwidth]{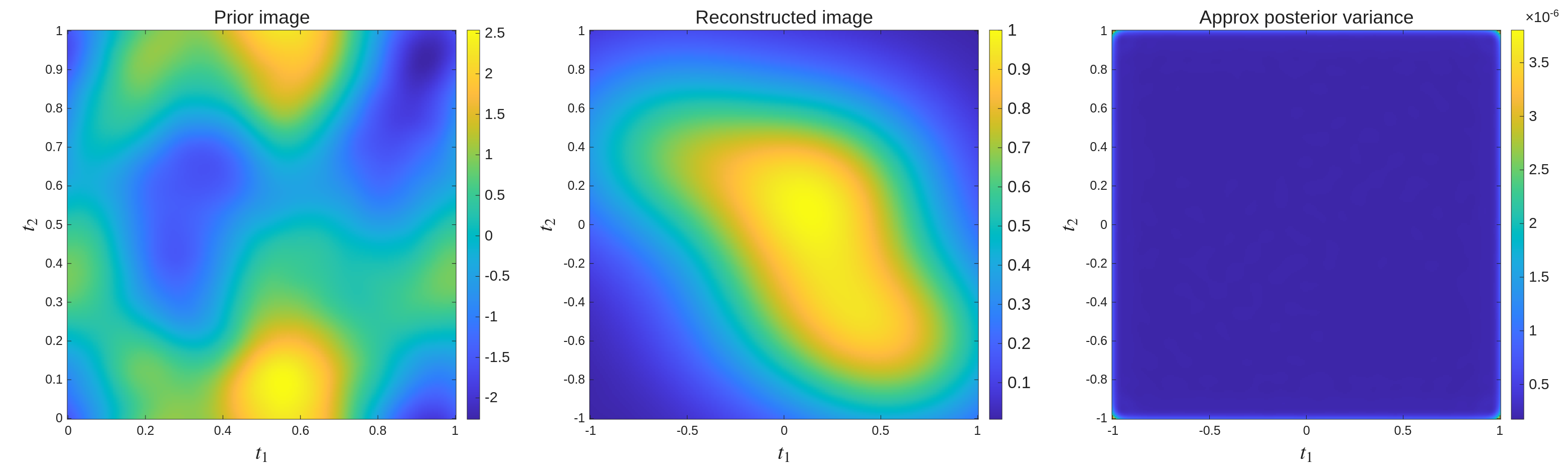}}\vspace{-2mm}
	\caption{Illustration of the prior image, reconstructed image, and corresponding pixel-wise variance at $k=300$.}
	\label{fig10}
\end{figure}

In this experiment, we assess the performance of the proposed Q-GKB posterior approximation in a genuinely large-scale setting. In particular, we emphasize its computational scalability, stability, and its ability to accurately capture both the posterior mean and uncertainty. In \Cref{fig11}, we illustrate the evolution of the approximate posterior mean and covariance as the Q-GKB iteration proceeds from $k=1$ to $k=300$. Representative reconstructions and variance fields (i.e., pixel-wise variance) are shown at $k=40$, $120$, $200$, and $280$, with a consistent color scale as in \Cref{fig10}. The corresponding mstd values are also computed following \cref{mstd}. From these results, we observe that the posterior mean converges rapidly: already at $k=40$, the reconstruction achieves high visual quality. In contrast, the posterior covariance requires a larger subspace dimension to stabilize. For small $k$, the covariance estimate remains relatively large, indicating that the dominant data-informed directions have not yet been fully captured. As $k$ increases, the covariance approximation gradually stabilizes, and the mstd decreases accordingly, indicating that the essential directions of the data-informed subspace are progressively captured. At $k=280$, both the posterior mean and covariance have essentially converged. This behavior is consistent with the theoretical characterization of data-informed Krylov subspaces and their role in posterior approximation.

The prior image, the reconstructed image, and the corresponding variance field are shown in \Cref{fig10}, where the variance level is at the order of $10^{-6}$, indicating very low uncertainty in the reconstruction. Importantly, for this large-scale CT problem, computing the exact posterior covariance, or even its diagonal, is computationally prohibitive under the FFT-based Mat\'ern prior. In contrast, the proposed Q-GKB framework enables efficient matrix-free computation of both the posterior mean and covariance approximation. These results demonstrate that the method remains numerically stable, scalable, and capable of capturing meaningful posterior uncertainty in high-dimensional settings.

\section{Conclusion}\label{sec6}
We have developed a data-informed framework for posterior approximation in large-scale Bayesian linear inverse problems. By shifting the perspective from the high-dimensional parameter space to the data space, we establish an isometric embedding from the data space to the parameter space, revealing an intrinsic low-dimensional geometric structure of the posterior update induced by the data. This structure naturally leads to a quotient-space formulation of the prior-to-posterior update, in which the informative directions are determined independently of the ambient parameter dimension. Building on this geometric insight, we propose a quotient-space Golub--Kahan bidiagonalization method to construct data-informed Krylov subspaces. By integrating empirical Bayesian inference into the iterative framework, the method enables simultaneous hyperparameter estimation and posterior approximation in a matrix-free manner. Numerical experiments on problems of increasing scale validate the proposed framework and support the theoretical findings.

The data-space geometric perspective offers a foundation for understanding low-dimensional structures in high-dimensional Bayesian inverse problems and provides a new path for posterior inference. It also opens several directions for future research. In particular, extending the quotient data space framework to inverse problems with infinite-dimensional parameter spaces and to nonlinear inverse problems, as well as integrating it with more general prior models and sampling-based inference methods, presents valuable directions.

\appendix 

\section{Proofs of technical lemmas}\label{apdx:A}
\begin{proofof}{\Cref{lem:lanczos_decay}}
Define $Q_k = \Sigma^{1/2}G^\top V_k$, $T_k = B_k^\top B_k$ and $P_k = Q_kQ_k^\top$. Then the columns of $Q_k$ are orthonormal and $P_k$ is the orthogonal projector onto $\mathcal{R}(Q_k)$, and we have
\begin{align}
  AQ_k &= \Sigma^{1/2}G^\top\Gamma^{-1}MV_k = \Sigma^{1/2}G^\top\Gamma^{-1}U_{k+1}B_k \nonumber \\
  &= \Sigma^{1/2}G^\top(V_kB_k^\top+\alpha_{k+1}v_{k+1}e_{k+1}^\top)B_{k} \nonumber \\
  &= Q_kT_k+\alpha_{k+1}\beta_{k+1}q_{k+1}e_k^\top,  \label{lanczos1}
\end{align}
where \(q_i=\Sigma^{1/2}G^\top v_i\). This implies that $Q_k^\top A Q_k=T_k$, and hence
\begin{equation}\label{A_k}
  \widehat A_k = \Sigma^{1/2}\widehat H_k\Sigma^{1/2}
  = \Sigma^{1/2}G^\top V_k B_k^\top B_k V_k^\top G\Sigma^{1/2}
  = Q_kT_kQ_k^\top = P_kAP_k.
\end{equation}
Since $\widehat A_k$ and $T_k$ share the same nonzero eigenvalues, the Cauchy interlacing theorem implies that $\mathrm{Tr}(A)\geq\mathrm{Tr}(T_k)=\mathrm{Tr}(\widehat{A}_k)$, leading to $\zeta_k>0$.
Using 
\[T_{k+1} = B_{k+1}^\top B_{k+1} =
\begin{pmatrix}
T_k & \alpha_{k+1}\beta_{k+1}e_k\\
\alpha_{k+1}\beta_{k+1}e_k^\top & \alpha_{k+1}^2+\beta_{k+2}^2
\end{pmatrix},\]
a direct computation gives
\begin{equation*}
  \widehat A_{k+1}-\widehat A_k = \alpha_{k+1}\beta_{k+1}(q_kq_{k+1}^\top+q_{k+1}q_k^\top) +(\alpha_{k+1}^2+\beta_{k+2}^2)q_{k+1}q_{k+1}^\top.
\end{equation*}
Therefore, we have
\begin{equation*}
  \mathrm{Tr}(\widehat A_{k+1}-\widehat A_k) = \alpha_{k+1}^2+\beta_{k+2}^2, \quad 
  \|\widehat A_{k+1}-\widehat A_k\|_F^2 = 2\alpha_{k+1}^2\beta_{k+1}^2+(\alpha_{k+1}^2+\beta_{k+2}^2)^2.
\end{equation*}
This leads to that 
\[\zeta_{k+1} = \mathrm{Tr}(A-\widehat A_{k+1}) = \zeta_k-\mathrm{Tr}(\widehat A_{k+1}-\widehat A_k) = \zeta_k-(\alpha_{k+1}^2+\beta_{k+2}^2),\]
where $\zeta_{0}=\mathrm{Tr}(A)=\mathrm{Tr}(H\Sigma)$.

For the recurrence of $\gamma_k$, notice that 
\begin{equation*}
  \gamma_k^2=\|A-\widehat A_k\|_F^2 = \|(I-P_k)A+P_kA(I-P_k)\|_{F}^2 = \|(I-P_k)A\|_{F}^2 + \|P_kA(I-P_k)\|_F^2,
\end{equation*}
since the two terms are orthogonal in the trace inner product. Using \cref{lanczos1}, we get 
\begin{equation*}
  P_kA(I-P_k) = Q_kQ_k^\top A(I-P_k) = \alpha_{k+1}\beta_{k+1}Q_ke_kq_{k+1}^\top = \alpha_{k+1}\beta_{k+1}q_kq_{k+1}^\top,
\end{equation*}
hence $\|P_kA(I-P_k)\|_F^2=\alpha_{k+1}^2\beta_{k+1}^2$. Using $I-P_k=(I-P_{k+1})+q_{k+1}q_{k+1}^\top$ and notice that the two terms are orthogonal in the trace inner product, we have 
\begin{equation*}
  \|(I-P_k)A\|_{F}^2 =  \|(I-P_{k+1})A\|_{F}^2 + \|q_{k+1}q_{k+1}^\top A\|_F^2
  = \|(I-P_{k+1})A\|_{F}^2 + \|Aq_{k+1}\|_2^2.
\end{equation*}
From the Q-GKB recurrence, we have 
\[\Gamma^{-1}Mv_{k+1} = \alpha_{k+1}\beta_{k+1}v_k + (\alpha_{k+1}^2+\beta_{k+2}^2)v_{k+1} + \alpha_{k+2}\beta_{k+2}v_{k+2}, \]
multiplying by $\Sigma^{1/2}G^\top$ gives
\[Aq_{k+1} = \alpha_{k+1}\beta_{k+1}q_k + (\alpha_{k+1}^2+\beta_{k+2}^2)q_{k+1} + \alpha_{k+2}\beta_{k+2}q_{k+2}. \]
This leads to $\|Aq_{k+1}\|_2^2=\alpha_{k+1}^2\beta_{k+1}^2+(\alpha_{k+1}^2+\beta_{k+2}^2)^2+\alpha_{k+2}^2\beta_{k+2}^2.$
Therefore, we obtain
\begin{align*}
  \gamma_{k}^2-\gamma_{k+1}^2 
  &= \|(I-P_k)A\|_{F}^2-\|(I-P_{k+1})A\|_{F}^2 + \alpha_{k+1}^2\beta_{k+1}^2 - \alpha_{k+2}^2\beta_{k+2}^2 \\
  &= \|Aq_{k+1}\|_2^2 + \alpha_{k+1}^2\beta_{k+1}^2 - \alpha_{k+2}^2\beta_{k+2}^2 \\
  &= (\alpha_{k+1}^2+\beta_{k+2}^2)^2 + 2\alpha_{k+1}^2\beta_{k+1}^2.
\end{align*}
A simple calculation leads to $\gamma_{0}^2=\mathrm{Tr}(A^2)=\mathrm{Tr}(H\Sigma H\Sigma)$. This completes the proof.
\end{proofof}

\begin{proofof}{\Cref{lem:inv_bnd}}
  Let $\delta=\|A_1-A_2\|_2$. Then $-\delta I \preceq A_1-A_2 \preceq \delta I$, or equivalently,   
  \[A_2-\delta I \preceq A_1 \preceq A_2+\delta I, \quad
  A_1-\delta I \preceq A_2 \preceq A_1+\delta I.\]
  Let $R_i=(I+A_i)^{-1}$ for $i=1,2$. Since $f(t)=(1+t)^{-1}$ is operator monotone decreasing on $[0,\infty)$, using $A_1 \preceq A_2+\delta I$ and $A_2 \preceq A_1+\delta I$ we obtain
  \[R_1 \succeq (I+A_2+\delta I)^{-1}, \quad
  R_2 \succeq (I+A_1+\delta I)^{-1}.\]
  Hence $R_1-R_2 \preceq R_1-(I+A_1+\delta I)^{-1}.$ Noticing that
  \[R_1-(I+A_1+\delta I)^{-1} = \delta (I+A_1)^{-1}(I+A_1+\delta I)^{-1},\]
  and using $0\prec (I+A_1)^{-1}\preceq I$ and $0\prec (I+A_1+\delta I)^{-1}\preceq \frac{1}{1+\delta}I$, we get $R_1-R_2 \preceq \frac{\delta}{1+\delta}I$. By symmetry, exchanging $A_1$ and $A_2$ yields $R_2-R_1 \preceq \frac{\delta}{1+\delta}I$. Therefore, we get 
  \begin{equation*}
    -\frac{\delta}{1+\delta}I \preceq R_1-R_2 \preceq \frac{\delta}{1+\delta}I \quad
    \Rightarrow \quad \|R_1-R_2\|_2\le \frac{\delta}{1+\delta} .
  \end{equation*}
  This completes the proof.
\end{proofof}

\section*{Acknowledgments}
The author thanks Prof. Fei Lu for many insightful discussions.

\bibliographystyle{plainnat} 
\bibliography{references}

\end{document}